\DeclareMathOperator{\supp}{supp}
\DeclareMathOperator*{\essinf}{ess\,inf}
\DeclareMathOperator{\RH}{RH}
\DeclareMathOperator{\diam}{diam}
\DeclareMathOperator{\divv}{div}
\DeclareMathOperator{\re}{Re}
\newtheorem{theorem}{Theorem}[section]
\newtheorem{lemma}[theorem]{Lemma}
\newtheorem{proposition}[theorem]{Proposition}
\theoremstyle{definition}
\newtheorem{definition}[theorem]{Definition}
\newtheorem{remark}[theorem]{Remark}
\newtheorem{example}[theorem]{Example}
\numberwithin{equation}{section}
\newcommand{\R}{\mathbf{R}}
\newcommand{\C}{\mathbf{C}}
\newcommand{\Z}{\mathbf{Z}}
\newcommand{\D}{\mathscr{D}}
\newcommand{\M}{\mathscr{M}}
\newcommand{\Sp}{\mathscr{S}}
\newcommand{\eps}{\varepsilon}
\newcommand{\calR}{\ensuremath{\mathcal{R}}}
\begin{document}
\pagestyle{headings}
\title[$A_1$--$A_\infty$ estimates for sparsely dominated operators]{Weak and strong type $A_1$--$A_\infty$ estimates for sparsely dominated operators}
\author{Dorothee Frey \and Zoe Nieraeth}

\address{Dorothee Frey, Delft Institute of Applied Mathematics, Delft University of Technology, P.O. Box 5031, 2600 GA Delft, The Netherlands\\}
\email{d.frey@tudelft.nl}

\address{Zoe Nieraeth, Delft Institute of Applied Mathematics, Delft University of Technology, P.O. Box 5031, 2600 GA Delft, The Netherlands\\}
\email{znieraeth@bcamath.org}

\date{\today}

\begin{abstract}
We consider operators $T$ satisfying a sparse domination property
\[
|\langle Tf,g\rangle|\leq c\sum_{Q\in\Sp}\langle f\rangle_{p_0,Q}\langle g\rangle_{q_0',Q}|Q|
\]
with averaging exponents $1\leq p_0<q_0\leq\infty$.

We prove weighted strong type boundedness for $p_0<p<q_0$ and use new techniques to prove weighted weak type $(p_0,p_0)$ boundedness with quantitative mixed $A_1$--$A_\infty$ estimates, generalizing results of Lerner, Ombrosi, and P\'erez and Hyt\"onen and P\'erez. Even in the case $p_0=1$ we improve upon their results as we do not make use of a H\"ormander condition of the operator $T$. Moreover, we also establish a dual weak type $(q_0',q_0')$ estimate.

In a last part, we give a result on the optimality of the weighted strong type bounds including those previously obtained by Bernicot, Frey, and Petermichl.
\end{abstract}

\keywords{Sparse domination, Muckenhoupt weights, sharp weighted bounds}

\maketitle

\section{Introduction}
In recent years, after a solution was found to the the well-known $A_2$ conjecture \cite{hytonen123}, the role of sparse operators has become increasingly important in the weighted theory of many operators, see for instance \cite{lacey17, lernaz15, condealonso16, diplinio16, benea171} and references therein. Sparse domination yields optimal quantitative $A_p$ estimates for $1<p<\infty$ for example for the classical Riesz transforms in $\R^n$. As has been shown by Bernicot, Frey, and Petermichl \cite{frey16}, the idea of sparse domination reaches far beyond the theory of Calder\'on-Zygmund operators. Indeed, one can consider the Riesz transform $\nabla L^{-1/2}$ in e.g. a convex doubling domain in $\R^n$, where $L$ is the Laplace operator with respect to Neumann boundary conditions. Generally, the Riesz transform in such a setting does not satisfy any pointwise regularity estimates and therefore falls outside of the class of Calder\'on-Zygmund operators.  However, it  satisfies a sparse domination property which does in fact yield the quantitative weighted bounds from the $A_2$ conjecture. In $\R^n$, foregoing the full range of $1<p<\infty$, one  can  consider the Riesz transform for elliptic operators $L=-\divv(A\nabla)$ for $A$ with bounded,  complex  coefficients. Such operators are only bounded in $L^p$ for a certain range $p_0<p<q_0$, and it was established in \cite{frey16} that they satisfy a sparse domination property
\[
|\langle Tf,g\rangle|\leq c\sum_{Q\in\Sp}\langle f\rangle_{p_0,Q}\langle g\rangle_{q_0',Q}|Q|
\]
from which general quantitative weighted bounds in the respective weighted $L^p$-spaces are deduced.

For Calder\'on-Zygmund operators, weighted weak type $(1,1)$ estimates were established by Lerner, Ombrosi, and P\'erez \cite{lerner09} and later improved upon by Hyt\"onen and P\'erez \cite{hytonen13}.  In this article,  we establish the corresponding $(p_0,p_0)$ estimate in the  above described  more general setting. The arguments used in \cite{lerner09} rely on introducing weights in the classical arguments involving Calder\'on-Zygmund decompositions $f=g+b$ and the vanishing mean value property of the `bad' part $b$ in combination with the H\"ormander condition of the kernel of the operator. In general, the operators we are considering here need not be integral operators at all and for the more general operators such as the Riesz transform associated to an elliptic operator, an argument by Blunck and Kunstmann \cite{kunstmann03}  (see also \cite{HM03})  gave a weak type $(p_0,p_0)$ boundedness using an adapted $L^{p_0}$ Calder\'on-Zygmund decomposition, where a certain cancellation of the operator with respect to the semigroup generated by the elliptic operator replaces the regularity estimates of the kernel. Weights were then introduced into this argument by Auscher and Martell \cite{auscher072}, but it seems like these techniques do not yield optimal bounds in terms of the constants of the weights. Therefore, we give a new argument to establish the corresponding bounds while still recovering the old bounds found in \cite{hytonen13}.

Here, in order to combine the previous approaches and to tie the theory together, we deduce quantitative weighted bounds directly from sparse domination assumptions. We introduce weights into a weak boundedness argument for sparse operators where there exists a Calder\'on-Zygmund decomposition  with the property that  the `bad' part $b$ cancels completely. We then combine this with generalizations of the main lemmata used in \cite{lerner09}. Moreover, we leave the Euclidean setting and extend the results to more general doubling metric measure spaces including certain bounded domains and Riemannian manifolds as was also studied in \cite{kunstmann03} and \cite{auscher072, auscher08}.

 In a last part we show that the strong type weighted estimates are optimal, given a precise control of the asymptotic behaviour of the unweighted $L^p$ operator norm of $T$ at the endpoints $p=p_0$ and $p=q_0$. We give an example of such an operator in the case $p_0=1$, $q_0=n$.

\subsection{The setting}
We consider the Euclidean space $\R^n$ equipped with a Borel measure $\mu$ that satisfies $0<\mu(B)<\infty$ for all balls $B$ and which satisfies the doubling property, i.e., there is a $C>0$ such that
\begin{equation}\label{eq:doublingprop}
\mu(2B)\leq C\mu(B)
\end{equation}
for all balls $B$, where $2B$ denotes the ball with the same center as $B$ and whose radius is twice that of the radius of $B$. Taking the smallest such $C$ we define $\nu:=\log_2 C$, which we refer to as the \textit{doubling dimension}. We write $|E|:=\mu(E)$ and for each measurable set $E$ of finite non-zero measure and each $0<p\leq\infty$ we will write
\[
\langle f\rangle_{p,E}:=\|f\chi_E\|_{L^p(|E|^{-1}\mathrm{d}\mu)},
\]
where $\chi_E$ denotes the indicator function of the set $E$. We write $\langle f,g\rangle:=\int f g\,\mathrm{d}\mu$, and define $p'=p/(p-1)\in[1,\infty]$ for $1\leq p\leq\infty$.

For $\alpha\in\big\{0,\frac{1}{3},\frac{2}{3}\big\}^n$ we will consider the translated dyadic systems
\[
\D^\alpha:=\bigcup_{k\in\Z}\big\{2^{-k}\big([0,1)^n+m+(-1)^k\alpha\big):m\in\Z^n\big\},
\]
and $\D:=\cup_\alpha\D^\alpha$.

The collection $\D$ is used as a replacement for the collection of balls or the collection of all cubes in $\R^n$, which is justified by the fact that for any ball $B(x;r)\subseteq\R^n$ there is a cube $Q\in\D$ so that $B(x;r)\subseteq Q$ and $\diam(Q)\leq \rho r$ for a constant $\rho=\rho(n)>0$, and for any cube $P\subseteq\R^n$ there is a cube $Q\in\D$ such that $P\subseteq Q$ and $\ell(Q)\leq 6\ell(P)$, where $\ell(R)$ denotes the side length of a cube $R$.

We say that a collection $\Sp\subseteq\D$ is called \emph{$\eta$-sparse} for $0<\eta\leq1$ if for each $\alpha\in\big\{0,\frac{1}{3},\frac{2}{3}\big\}^n$ there is a pairwise disjoint collection $(E_Q)_{Q\in\Sp\cap\D^\alpha}$ of measurable sets so that $E_Q\subseteq Q$ and $|Q|\leq\eta^{-1}|E_Q|$.

\begin{remark}\label{rem:homrem}
Since $\R^n$ is connected and unbounded, the doubling property implies that $\mu(\R^n)=\infty$ \cite{grig91}. We are working in $\R^n$ for notational reasons only; since our applications lie in a more general framework, our arguments are written so that they work with minimal adaptations in general doubling metric measure spaces $X$. Our main results remain true even when $\mu(X)<\infty$, for example when $X$ is a bounded Lipschitz domain in $\R^n$.  We will detail how this can be seen in Section \ref{sec:ext}.
\end{remark}
We let $\mathcal{D}$ be a space of test functions on $\R^n$ with the property that it is dense in $L^p(w)$ for all $1\leq p<\infty$ and all weights $w\in A_\infty$, such as, for example, $\mathcal{D}=C_c^\infty(\R^n)$.
\begin{definition}
Let $T$ be a (sub)linear operators, initially defined on $\mathcal{D}$, with the following property: There are $1\leq p_0<q<q_0\leq\infty$ and constants $c>0$ and $0<\eta\leq1$ so that for each pair of functions $f,g\in\mathcal{D}$ there is an $\eta$-sparse collection $\Sp\subseteq\D$ so that
\begin{equation}\label{eq:sumsparse}
|\langle Tf,g\rangle|\leq c\sum_{Q\in\Sp}\langle f\rangle_{p_0,Q}\langle g\rangle_{q_0',Q}|Q|.
\end{equation}
Then we will write $T\in S(p_0,q_0)$, or $T\in S(p_0,q_0;\mu)$ if we wish to emphasize the underlying measure, and we shall refer to the operators in this class as \textit{sparsely dominated operators}.
\end{definition}
If $T\in S(p_0,q_0)$, then it extends to a bounded operator on $L^p$ for all $p_0<p<q_0$, see Proposition \ref{prop:sparseim} below. For examples of operators in this class we refer the reader to Subsection \ref{subsec:examples}

When writing that a constant $C=C(T)>0$ depends on $T$, we mean that it depends on the constants $c,\eta$ in the domination property \eqref{eq:sumsparse}. We remark that the sum on the right-hand side of \eqref{eq:sumsparse} can be split into $3^n$ sums by considering the different dyadic grids, simplifying the proofs by only having to consider a single dyadic grid at a time. Finally, we remark that if $T$ is linear, then  $T\in S(p_0,q_0)$ if and only if $T^\ast\in S(q_0',p_0')$, where $T^\ast$ denotes the dual operator of $T$.

We will write $A\lesssim B$ when there is a constant $C>0$, independent of the important parameters, so that $A\leq C B$. Moreover we write $A\simeq B$ if $A\lesssim B$ and $B\lesssim A$.
\subsection{Main results}
For $1\leq p_0<q_0\leq\infty$ we consider an operator $T\in S(p_0,q_0)$. Then $T$ will be of strong type $(p,p)$ for any $p_0<p<q_0$ and of weak type $(p_0,p_0)$, see Proposition \ref{prop:sparseim} below. As a matter of fact, $T$ will satisfy weighted boundedness for various classes of weights. It has been shown in \cite{frey16} that for $p_0<p<q_0$ and any $w\in A_{p/p_0}\cap\RH_{(q_0/p)'}$ we have
\begin{equation}\label{eq:atwo}
\|T\|_{L^p(w)\to L^p(w)}\lesssim[w^{(q_0/p)'}]_{A_{\phi(p)}}^{\max\left(\frac{1}{p-p_0},\frac{q_0-1}{q_0-p}\right)\big/\left(\frac{q_0}{p}\right)'}\leq\left([w]_{A_{p/p_0}}[w]_{\RH_{(q_0/p)'}}\right)^{\max\left(\frac{1}{p-p_0},\frac{q_0-1}{q_0-p}\right)},
\end{equation}
where $\phi(p)=(q_0/p)'(p/p_0-1)+1$, and that the exponent in the last estimate is optimal for sparse operators. This generalizes the positive result of the well-known $A_2$-conjecture, stating that for all Calder\'{o}n-Zygmund operators $T$ one has
\begin{equation}\label{eq:atwoclassical}
\|T\|_{L^p(w)\to L^p(w)}\lesssim[w]_{A_p}^{\max\left(\frac{1}{p-1},1\right)}.
\end{equation}
Indeed, the result in \eqref{eq:atwo} recovers this result since Calder\'{o}n-Zygmund operators are in the class $S(1,\infty)$. Historically, the estimate \eqref{eq:atwoclassical} was first proven to be true for the Beurling-Ahlfors transform by Petermichl and Volberg \cite{petermichl02}, solving an optimal regularity problem for solutions to Beltrami equations. In between this period and the time that \eqref{eq:atwoclassical} was established in full generality by Hyt\"{o}nen \cite{hytonen123}, it was shown by Lerner, Ombrosi, and P\'{e}rez \cite{lerner08} that for all Calder\'{o}n-Zygmund operators $T$ one has
\begin{equation}\label{eq:aone}
\|T\|_{L^p(w)\to L^p(w)}\lesssim p p'\log\left(e+\frac{1}{p-1}\right)[w]_{A_1}
\end{equation}
for all $1<p<\infty$, showing a significantly better exponent of the constant of the weight when considering the smaller class of weights $A_1\subseteq A_p$. Using mixed $A_1$--$A_\infty$ type estimes, this result was improved by Hyt\"{o}nen and P\'{e}rez \cite{hytonen13} to
\begin{equation}\label{eq:inftyonemix}
\|T\|_{L^p(w)\to L^p(w)}\lesssim pp'[w]_{A_\infty}^{\frac{1}{p'}}[w]^{^{\frac{1}{p}}}_{A_1},
\end{equation}
for $1<p<\infty$, where they are considering Wilson's $A_\infty$ constant
\[
[w]_{A_\infty}=\sup_{\text{$B$ a ball}}\frac{1}{w(B)}\int_B\!\M(w\chi_B)\,\mathrm{d}\mu,
\]
which appears in \cite{wilson87, wilson89, wilson08}. They also provided an improvement to \eqref{eq:atwoclassical} by using mixed $A_p$--$A_\infty$ type estimates. Such mixed type estimates have also appeared in the recent work by Li \cite{li17}, who gives a direct improvement of \eqref{eq:atwo}.

To continue on along this line of results, we establish the following:
\begin{theorem}\label{thm:main1}
Let $1\leq p_0<p<q_0\leq\infty$, $T\in S(p_0,q_0)$, and $w\in A_1\cap\RH_{(q_0/p)'}$. Then there is a constant $c=c(T,\nu,n)>0$ so that
\begin{equation}\label{eq:timate}
\|T\|_{L^p(w)\to L^p(w)}\leq c c_p[w^{(q_0/p)'}]_{A_\infty}^{\frac{1}{p'}}[w^{(q_0/p)'}]^{^{\frac{1}{p(q_0/p)'}}}_{A_1},
\end{equation}
with
\[
c_p=\left[\left(\frac{p'}{q_0'}\right)'\right]^{\frac{1}{q_0'}}\left[\left(\frac{p_0'}{p'}\right)'\left(\frac{p}{p_0}\right)'\right]^{\frac{1}{p_0}}.
\]
In particular, we have
\begin{equation}\label{eq:bestimate}
\|T\|_{L^p(w)\to L^p(w)}\lesssim c_p[w^{(q_0/p)'}]_{A_1}^{\frac{1}{q_0'}}\leq c_p\left([w]_{A_1}[w]_{\RH_{(q_0/p)'}}\right)^{\frac{q_0-1}{q_0-p}}.
\end{equation}
\end{theorem}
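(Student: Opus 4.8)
The plan is to dualise and reduce everything to the bilinear sparse form. Since the dual of $L^p(w)$ for the unweighted pairing is $L^{p'}(\sigma)$ with $\sigma:=w^{1-p'}$, and since $T\in S(p_0,q_0)$ produces for each $f,h\in\mathcal{D}$ an $\eta$-sparse family $\Sp$ with $|\langle Tf,h\rangle|\le c\sum_{Q\in\Sp}\langle f\rangle_{p_0,Q}\langle h\rangle_{q_0',Q}|Q|$, it suffices to bound this sparse form, uniformly over $\eta$-sparse families, by $c\,c_p\,[u]_{A_\infty}^{1/p'}[u]_{A_1}^{1/(p(q_0/p)')}\|f\|_{L^p(w)}\|h\|_{L^{p'}(\sigma)}$, where $u:=w^{(q_0/p)'}$. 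Splitting the sum over the grids $\D^\alpha$, I may assume $\Sp\subseteq\D^\alpha$, so that the sets $E_Q$ are pairwise disjoint. Two facts about the weights are recorded at the outset: the reverse Hölder hypothesis gives $u\in A_1$ with $[u]_{A_1}\le\big([w]_{A_1}[w]_{\RH_{(q_0/p)'}}\big)^{(q_0/p)'}$ and hence $[w]_{A_1}^{1/p}\le[u]_{A_1}^{1/(p(q_0/p)')}$; and $\{u(Q)\}_{Q\in\Sp}$ is a Carleson sequence with respect to $u\,\mathrm{d}\mu$ with constant $\le\eta^{-1}[u]_{A_\infty}$, since for $x\in E_Q\subseteq Q\subseteq R$ one has $\M(u\chi_R)(x)\ge\langle u\rangle_Q$, whence $\sum_{Q\in\Sp,\,Q\subseteq R}u(Q)\le\eta^{-1}\int_R\M(u\chi_R)\,\mathrm{d}\mu=\eta^{-1}[u]_{A_\infty}\,u(R)$.

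Next I separate the two averages by Hölder's inequality. On the $h$-side, using the exponents $p'/q_0'$ and its conjugate and the identity $\sigma^{-q_0'/(p'-q_0')}=u$ (this is exactly where the reverse Hölder exponent is used), one converts $\langle h\rangle_{q_0',Q}$ into a $\sigma$-adapted average of $h$ times a correction factor involving $\langle u\rangle_Q$, which absorbs the mismatch between the averaging exponent $q_0'$ and $p'$; the conjugation constant $[(\tfrac{p'}{q_0'})']^{1/q_0'}$ is carried along. On the $f$-side I use only the elementary bound $\langle f\rangle_{p_0,Q}\le\essinf_Q\M_{p_0}f$, with $\M_{p_0}g:=\M(|g|^{p_0})^{1/p_0}$, deliberately not inserting the weight at this point. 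After keeping track of the residual powers of $|Q|$ and of $u(Q)$, the sparse form is reduced, up to the factor $[(\tfrac{p'}{q_0'})']^{1/q_0'}$, to a sum in which $f$ appears only through $\essinf_Q\M_{p_0}f$ and $h$ only through a $u$-average.

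The core of the argument is then to estimate this sum by a single Hölder inequality with exponents $p$ and $p'$, distributing the Carleson weights $u(Q)$, into a product of two factors. The second factor is controlled by the \emph{sharp weighted Carleson embedding theorem} for the Carleson sequence $\{u(Q)\}$ (with exponent $p'$); this is the only place the $A_\infty$ constant is spent, and it enters precisely to the power $1/p'$, producing the factor $[u]_{A_\infty}^{1/p'}$ together with an $L^{p'}(\sigma)$ norm of $h$. The first factor must be controlled by $\|\M_{p_0}f\|_{L^p(w)}$ \emph{without} paying any $A_\infty$ constant — which dictates how the residual $|Q|$-powers must be handled, rather than a crude use of the `$\essinf$'--Carleson lemma — followed by the $A_1$ maximal estimate $\|\M_{p_0}f\|_{L^p(w)}\le c_n[(\tfrac{p}{p_0})']^{1/p_0}[w]_{A_1}^{1/p}\|f\|_{L^p(w)}$, in which $[w]_{A_1}^{1/p}\le[u]_{A_1}^{1/(p(q_0/p)')}$; the remaining numerical factor $[(\tfrac{p_0'}{p'})']^{1/p_0}$ is produced when balancing this Hölder split. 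Collecting the constants gives \eqref{eq:timate} with the stated $c_p$.

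Finally, \eqref{eq:bestimate} follows by substituting $[u]_{A_\infty}\le c_n[u]_{A_1}$ into \eqref{eq:timate} and using the arithmetic identity $\tfrac1{p'}+\tfrac1{p(q_0/p)'}=\tfrac1{q_0'}$, which gives $\|T\|_{L^p(w)\to L^p(w)}\lesssim c_p\,[u]_{A_1}^{1/q_0'}$; the last inequality is then $[u]_{A_1}^{1/q_0'}\le\big([w]_{A_1}[w]_{\RH_{(q_0/p)'}}\big)^{(q_0/p)'/q_0'}$ together with $(q_0/p)'/q_0'=(q_0-1)/(q_0-p)$. I expect the main obstacle to be the bookkeeping in that central Hölder split: arranging it so that the $A_\infty$ constant is incurred only once, through a single sharp Carleson embedding — hence to the power $1/p'$ rather than $1$, as a symmetric application of the `$\essinf$'--Carleson lemma to both factors would give — while the complementary factor is absorbed entirely by the $A_1$ maximal bound, and then checking that none of the Hölder and reverse-Hölder steps loses enough to spoil the sharpness of the weight exponents or the precise form of $c_p$.
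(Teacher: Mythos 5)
Your outer reductions are fine (duality to the sparse form with $\sigma=w^{1-p'}$, the identity $\sigma^{-q_0'/(p'-q_0')}=u$, $[u]_{A_1}\le([w]_{A_1}[w]_{\RH_{(q_0/p)'}})^{(q_0/p)'}$, the arithmetic $1/p'+1/(p(q_0/p)')=1/q_0'$ for \eqref{eq:bestimate}), but the central step of your argument has a genuine gap, and it is not mere bookkeeping. After your per-cube H\"older on the $h$-side with exponents $p'/q_0'$, each cube carries the quantity $\bigl(\frac{1}{|Q|}\int_Q|h|^{p'}\sigma\,\mathrm{d}\mu\bigr)^{1/p'}$, i.e.\ a full $L^{p'}$-average of $h$. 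The sharp weighted Carleson embedding theorem for the sequence $\{u(Q)\}_{Q\in\Sp}$ at exponent $p'$ controls sums of the form $\sum_Q u(Q)\bigl(\frac{1}{u(Q)}\int_Q\phi\,u\,\mathrm{d}\mu\bigr)^{p'}$, i.e.\ it needs $L^1(u)$-averages raised to the power $p'$; by Jensen these are dominated by, not dominating, the $L^{p'}$-averages you have produced, so the embedding cannot be applied to your second factor, and the variant you would actually need, $\sum_Q u(Q)\cdot\frac{1}{u(Q)}\int_Q\phi^{p'}u\,\mathrm{d}\mu\lesssim[u]_{A_\infty}\|\phi\|_{L^{p'}(u)}^{p'}$, is simply false: the left-hand side is the overlapping sum $\sum_Q\int_Q\phi^{p'}u\,\mathrm{d}\mu$, which the Carleson condition does not control. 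Conversely, if you keep a low-exponent weighted average of $h$ per cube so that the embedding does apply, the residual factor on the $f$-side becomes $\bigl(\sum_Q(\essinf_Q\M_{p_0}f)^p\,u(Q)\text{-type weights}\bigr)^{1/p}$, and sparseness alone gives no cost-free comparison of $u(Q)$ with $u(E_Q)$: per cube it costs a full $A_1$ constant (leading to the exponent $2/p$ instead of $1/p$ in the model case $p_0=1$, $q_0=\infty$), while in aggregate it costs another $[u]_{A_\infty}$ (leading to $[u]_{A_\infty}^{1}$ instead of $[u]_{A_\infty}^{1/p'}$). So the structure you postulate --- $A_\infty$ spent exactly once, to the power $1/p'$, with the $f$-side absorbed at no further cost by $\|\M_{p_0}\|_{L^p(w)\to L^p(w)}\lesssim[w]_{A_1}^{1/p}$ --- is precisely what is missing, and your own closing remark concedes that you have not verified it. A telltale symptom is the constant $[(\tfrac{p'}{q_0'})']^{1/q_0'}$: a per-cube H\"older inequality has constant $1$ and cannot ``carry it along''.

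For comparison, the paper's proof never uses the Wilson constant of the sparse family as a Carleson constant. It establishes Lemma \ref{lem:main} through three steps: the pointwise domination of the sparse form by $\int\M_{p_0}\bigl((\M_{q_0'}g)^{1-\beta}f\bigr)(\M_{q_0'}g)^{\beta}\,\mathrm{d}\mu$ (Lemma \ref{lem:intermedmax}), the Fefferman--Stein inequality $\|\M f\|_{L^p(v)}\lesssim\|f\|_{L^p(\M v)}$ combined with Kolmogorov's lemma \eqref{eq:maxaone} (Lemma \ref{lem:sparsecoifman}, which is where $[(\tfrac{p_0'}{p'})'(\tfrac{p}{p_0})']^{1/p_0}$ arises), and the two-weight maximal estimate \eqref{eq:maxwduo} (which produces $[(\tfrac{p'}{q_0'})']^{1/q_0'}(q')^{1/p'}$). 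The mixed constants in \eqref{eq:timate} then come from the sharp reverse H\"older mechanism of Proposition \ref{prop:weightprop}(iii): choosing $q=1+1/(\kappa[u]_{A_\infty})$ gives $(q')^{1/p'}\simeq[u]_{A_\infty}^{1/p'}$ and $\M_{q(q_0/p)'}w=(\M_q u)^{1/(q_0/p)'}\lesssim[u]_{A_1}^{1/(q_0/p)'}w$. If you want to salvage your approach you would need a substitute for this mechanism inside the sparse-form framework; as written, your proof of \eqref{eq:timate} is not complete, whereas your deduction of \eqref{eq:bestimate} from \eqref{eq:timate} is correct and coincides with the paper's.
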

Our result \eqref{eq:timate} recovers \eqref{eq:inftyonemix} when setting $p_0=1$, $q_0=\infty$. One shows that \eqref{eq:bestimate} follows from \eqref{eq:timate} by applying \eqref{eq:ainftyregest} and Proposition \ref{prop:weightprop}(ii) below. This result recovers the exponent in \eqref{eq:aone} when $q_0=\infty$.

The constants found in the estimate \eqref{eq:timate} can be used to establish weighted weak type $(p_0,p_0)$ boundedness. In the work of Lerner, Ombrosi, and P\'{e}rez \cite{lerner08} it was shown that for all Calder\'{o}n-Zygmund operators $T$ and all weights $w\in A_1$ one has
\begin{equation}\label{eq:waone}
\|T\|_{L^1(w)\to L^{1,\infty}(w)}\lesssim [w]_{A_1}\log(e+[w]_{A_1}).
\end{equation}
This result is related to the weak Muckenhoupt-Wheeden conjecture, which is now known to be false \cite{nazarov10}, stating that one has linear dependence on $[w]_{A_1}$ on the right-hand side of \eqref{eq:waone}, and the logarithm can be removed. However, the result \eqref{eq:waone} was improved by Hyt\"{o}nen and P\'{e}rez \cite{hytonen13} to
\begin{equation}\label{eq:winftyonemix}
\|T\|_{L^1(w)\to L^{1,\infty}(w)}\lesssim [w]_{A_1}\log(e+[w]_{A_\infty}).
\end{equation}
It is expected that this dependence on the constants of the weight is optimal.

Both the proofs of \eqref{eq:waone} and \eqref{eq:winftyonemix} rely on taking a Calder\'on-Zygmund decomposition $f=g+b$. Here, the H\"{o}rmander condition of the kernel of $T$ is used to deal with the `bad' part $b$, using an argument that can already be found in \cite{perez94} (namely, they use \cite[Lemma 3.3,\,p.~413]{garcia85}). Since we are making no such assumptions on our operators, which may not even be integral operators, we rely on new methods to deal with this term, using only sparse domination. We establish the following result:
\begin{theorem}\label{thm:main2}
Let $1\leq p_0<p<q_0\leq\infty$, $T\in S(p_0,q_0)$, and $w\in A_1\cap\RH_{(q_0/p_0)'}$. Then there is a constant $c=c(T,p_0,q_0,\nu,n)>0$ so that
\[
\|T\|_{L^{p_0}(w)\to L^{p_0,\infty}(w)}\leq c\psi(w)
\]
with
\[
\psi(w)=\begin{cases}
[w]_{A_1}\log(e+[w]_{A_\infty}) & \text{if $p_0=1$, $q_0=\infty$;}\\
[w]^{\frac{1}{p_0}}_{A_1}[w]^{\frac{1}{p_0'}}_{A_\infty}\log(e+[w]_{A_\infty})^{\frac{2}{p_0}} & \text{if $p_0>1$, $q_0=\infty$;}\\
[w^{q_0'}]_{A_\infty}[w]_{A_1}[w]_{\RH_{q_0'}} & \text{if $p_0=1$, $q_0<\infty$;}\\
[w^{(q_0/p_0)'}]^{1+\frac{1}{p_0}}_{A_\infty}\left([w]_{A_1}[w]_{\RH_{(q_0/p_0)'}}\right)^{\frac{1}{p_0}}  & \text{if $p_0>1$, $q_0<\infty$.}
\end{cases}
\]
\end{theorem}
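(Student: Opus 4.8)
The plan is to carry out an $L^{p_0}$ Calder\'on--Zygmund decomposition of $f$, to send the good part into Theorem~\ref{thm:main1}, and to control the bad part \emph{directly} from the sparse domination \eqref{eq:sumsparse}, with the cube structure of the decomposition playing the role that the H\"ormander condition plays in \cite{lerner09, hytonen13}. By the $3^n$ splitting of \eqref{eq:sumsparse} we may assume $\Sp\subseteq\D^\alpha$ for a fixed $\alpha$, and by density of $\D$ together with a standard limiting argument it suffices to bound $w(\{|Tf|>\lambda\})$ for nonnegative $f\in\D$ with $\|f\|_{L^{p_0}(w)}=1$ and $\lambda>0$ fixed. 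Choose $0<\gamma\le 1$ (to be optimised at the very end, separately in the four regimes, in terms of $[w]_{A_1}$, $[w]_{A_\infty}$ and $[w]_{\RH_{(q_0/p_0)'}}$), let $\{Q_j\}_j\subseteq\D^\alpha$ be the maximal dyadic cubes with $\langle f\rangle_{p_0,Q_j}>\gamma\lambda$, and put $\Omega=\bigcup_jQ_j$, $\Omega^\ast=\bigcup_j 3Q_j$, $b=f\chi_\Omega=:\sum_j b_j$ and $g=f\chi_{\Omega^c}$. By maximality and the Lebesgue differentiation theorem $\|g\|_{L^\infty}\lesssim\gamma\lambda$, while for every dyadic $Q\supseteq Q_j$ one has $\langle b_j\rangle_{p_0,Q}^{p_0}|Q|=\int_{Q_j}f^{p_0}\,\mathrm{d}\mu\simeq_\nu(\gamma\lambda)^{p_0}|Q_j|$ by doubling; moreover $\|g\|_{L^{p_0}(w)}\le 1$ and $\|b\|_{L^{p_0}(w)}\le 1$. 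Since the pairwise disjoint cubes satisfy $|Q_j|\le(\gamma\lambda)^{-p_0}\int_{Q_j}f^{p_0}\,\mathrm{d}\mu$ and $w\in A_1$, summing over $j$ and using $\essinf_{Q_j}w\le w$ a.e.\ on $Q_j$ gives the crucial estimate $w(\Omega^\ast)\lesssim_\nu[w]_{A_1}(\gamma\lambda)^{-p_0}$. As $\{|Tf|>\lambda\}\subseteq\Omega^\ast\cup\{|Tg|>\tfrac{\lambda}{2}\}\cup\bigl((\Omega^\ast)^c\cap\{|Tb|>\tfrac{\lambda}{2}\}\bigr)$, it remains to treat the last two sets.

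For the good part, fix an exponent $r\in(p_0,q_0)$: if $q_0=\infty$ take $r-p_0\simeq\log(e+[w]_{A_\infty})^{-1}$, and if $q_0<\infty$ take $r$ so close to $p_0$ that $w\in\RH_{(q_0/r)'}$ -- possible with $(q_0/r)'-(q_0/p_0)'$ of size $\simeq[w^{(q_0/p_0)'}]_{A_\infty}^{-1}$, by the self-improvement \eqref{eq:ainftyregest} of the reverse H\"older inequality applied to $w^{(q_0/p_0)'}\in A_\infty$. Chebyshev's inequality, Theorem~\ref{thm:main1}, and the interpolation $\|g\|_{L^r(w)}^r\le\|g\|_{L^\infty}^{r-p_0}\|g\|_{L^{p_0}(w)}^{p_0}\lesssim(\gamma\lambda)^{r-p_0}$ then give
\[
w\bigl(\{|Tg|>\tfrac{\lambda}{2}\}\bigr)\lesssim\frac{\|T\|_{L^r(w)\to L^r(w)}^r\,\|g\|_{L^r(w)}^r}{\lambda^r}\lesssim\frac{\gamma^{\,r-p_0}}{\lambda^{p_0}}\Bigl(c_r\,[w^{(q_0/r)'}]_{A_\infty}^{\frac1{r'}}\,[w^{(q_0/r)'}]_{A_1}^{\frac1{r(q_0/r)'}}\Bigr)^{r},
\]
where $c_r$ is the constant of Theorem~\ref{thm:main1}. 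With the stated choice of $r$ the factors $c_r^{\,r}$ and $[w^{(q_0/r)'}]_{A_\infty}^{r/r'}$ reduce to admissible powers of $\log(e+[w]_{A_\infty})$, respectively of $[w^{(q_0/p_0)'}]_{A_\infty}$, while Proposition~\ref{prop:weightprop}(ii) turns $[w^{(q_0/r)'}]_{A_1}^{1/(q_0/r)'}$ into $[w]_{A_1}[w]_{\RH_{(q_0/p_0)'}}$; since $\gamma\le1$ this bounds the good-part term by a constant times $\psi(w)\lambda^{-p_0}$ in each of the four cases.

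The bad part is where the genuinely new argument is needed, and it is the main obstacle: without a kernel one cannot localise $Tb$ near $\Omega^\ast$, so the contribution of $(\Omega^\ast)^c$ must be extracted purely from \eqref{eq:sumsparse}. By duality -- pairing $Tb$ with $h=\operatorname{sgn}(Tb)\chi_{(\Omega^\ast)^c}w$, restricted to a suitable bounded set when $p_0>1$ and handled through the level sets directly when $p_0=1$, and after a routine approximation legitimising the use of \eqref{eq:sumsparse} for $b$, which need not be a test function -- the estimate of this set reduces to bounding the restricted sparse form $\sum_{Q\in\Sp}\langle b\rangle_{p_0,Q}\,\langle w\chi_{(\Omega^\ast)^c}\rangle_{q_0',Q}\,|Q|$. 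Here the decomposition does the work of the H\"ormander condition: a cube $Q\in\Sp$ contributes only if it \emph{strictly} contains every $Q_j$ it meets (a cube contained in some $Q_j$ lies in $\Omega^\ast$, where $h$ vanishes), and on such $Q$ one has $\langle b\rangle_{p_0,Q}^{p_0}|Q|=\sum_{j\,:\,Q_j\subsetneq Q}\int_{Q_j}f^{p_0}\,\mathrm{d}\mu\lesssim_\nu(\gamma\lambda)^{p_0}|Q|$, so that $\langle b\rangle_{p_0,Q}\lesssim_\nu\gamma\lambda$. Keeping the identity $\langle b\rangle_{p_0,Q}^{p_0}|Q|=\sum_{j:Q_j\subsetneq Q}\langle f\rangle_{p_0,Q_j}^{p_0}|Q_j|$, exchanging the order of summation, and controlling the resulting tails $\sum_{Q\in\Sp,\,Q\supseteq Q_j}\langle w\chi_{(\Omega^\ast)^c}\rangle_{q_0',Q}$ by suitable generalisations -- to $q_0'$-averages and to the reverse-H\"older setting -- of the Carleson-type and $A_\infty$ summation lemmata of \cite{lerner09, hytonen13}, one obtains, again using $w(\Omega^\ast)\lesssim_\nu[w]_{A_1}(\gamma\lambda)^{-p_0}$ and Proposition~\ref{prop:weightprop}(ii), a bound of the form $\lesssim\psi(w)\lambda^{-p_0}$ after the final choice of $\gamma$. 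The convergence of the sparse sums involved is ultimately the same mechanism that makes $T$ bounded on $L^r$ for $p_0<r<q_0$ (Proposition~\ref{prop:sparseim}), namely the gain $\tfrac1{p_0}-\tfrac1{q_0}>0$. Collecting the three contributions proves the theorem, and the four cases of $\psi(w)$ record precisely which factors survive the optimisation: a single $\log(e+[w]_{A_\infty})$ when $p_0=1,q_0=\infty$; additional powers of $\log(e+[w]_{A_\infty})$ and of $[w]_{A_\infty}$ when $p_0>1$, coming from the interpolation exponent $r$; and the extra $[w^{(q_0/p_0)'}]_{A_\infty}$ and $[w]_{\RH_{(q_0/p_0)'}}$ factors when $q_0<\infty$, coming from the reverse-H\"older self-improvement and from \eqref{eq:sumsparse} being bilinear rather than pointwise.
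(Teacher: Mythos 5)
There is a genuine gap, and it sits exactly at the point you yourself identify as ``where the genuinely new argument is needed'': the bad part. Your decomposition takes $b=f\chi_\Omega$ with no mean-zero correction, so there is no cancellation to exploit, and your plan is to bound $\langle b\rangle_{p_0,Q}\lesssim\gamma\lambda$ on sparse cubes strictly containing the stopping cubes and then to control the tails $\sum_{Q\in\Sp,\,Q\supseteq Q_j}\langle w\chi_{(\Omega^\ast)^c}\rangle_{q_0',Q}$ by ``suitable generalisations of the Carleson-type and $A_\infty$ summation lemmata''. But those lemmata are neither stated in the paper nor proved by you, and they are not routine: the terms in the tail do not decay along a dyadic tower (already for $q_0=\infty$ and $w\equiv 1$ the summands $\langle w\chi_{(\Omega^\ast)^c}\rangle_{1,Q}$ are essentially constant in $Q$), so any convergence must come from the sparseness of $\Sp$ \emph{together} with the weight, and extracting from such a summation the precise constants $\psi(w)$ --- in particular only a single $\log(e+[w]_{A_\infty})$ when $p_0=1$, $q_0=\infty$, in a situation where the failure of the weak Muckenhoupt--Wheeden conjecture shows the bad term cannot be handled with $[w]_{A_1}$ alone --- is the entire difficulty of the theorem. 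As written, the crux of the proof is deferred to unproven auxiliary results whose quantitative form is exactly what is at stake. Two secondary points are also waved through: $b\notin\mathcal{D}$, so applying \eqref{eq:sumsparse} to the pair $(b,h)$ and even making sense of $Tb$ requires more than ``a routine approximation''; and for $p_0>1$ the duality with $h=\operatorname{sgn}(Tb)\chi_{(\Omega^\ast)^c}w$ targets an $L^1(w)$ bound on $Tb$ outside $\Omega^\ast$, which is not the right quantity at a weak $(p_0,p_0)$ endpoint.

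For comparison, the paper's proof is engineered to make this problem disappear rather than to solve it. It works with the dual characterization \eqref{eq:weakwt} of the weak norm, removes the exceptional set from $E$ (so $E'\subseteq\Omega^c$), and performs the Calder\'on--Zygmund decomposition of $|f|^{p_0}$ (not of $f$) in the \emph{same} dyadic grid as the sparse collection (Lemma \ref{lem:caldzyg1}); since every sparse cube meeting $E'$ contains entirely each Calder\'on--Zygmund cube it meets and the bad parts $b_P$ have mean zero, one has the exact identity $\langle f\rangle_{p_0,Q}^{p_0}=\langle g\rangle_{1,Q}$ on all relevant cubes --- the bad part cancels completely (Remark \ref{rem:czob}), and no tail summation is ever needed. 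The remaining good-part term is then estimated by Lemma \ref{lem:main} applied with the weight $w\chi_{E'}$, the constancy observation \eqref{eq:maxconst} to replace $g$ by $|f|^{p_0}$ against $\M_{q(q_0/p)'}w$ via \eqref{eq:gtof}, and a quantified choice of the auxiliary exponents $p$ and $q$ (close to $p_0$ and $1$, with gaps of size $[v]_{A_\infty}^{-1}$ or $\log(e+[w]_{A_\infty})^{-1}$) together with Proposition \ref{prop:weightprop}(ii)--(iii); your treatment of the good part via Theorem \ref{thm:main1} with $r$ close to $p_0$ is in the same spirit and is essentially sound, but it cannot rescue the unproved bad-part estimate.
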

We note that in particular we recover the bound \eqref{eq:winftyonemix}. It is of interested to point out that we get this bound even for operators outside of the class of Calder\'on-Zygmund operators that are in $S(1,\infty)$, see Example \ref{ex:neumannlaplacian} below.

We also establish a dual result of the type first studied in \cite{lerner09}, generalizing the result \cite[Theorem 1.23]{hytonen13}. Here we denote by $T^\ast$ the dual operator of $T$ for linear $T$.
\begin{theorem}\label{thm:maindual}
Let $1\leq p_0<q_0\leq\infty$, $T\in S(p_0,q_0)$ linear and $w\in A_1$. Then there is a constant $c=c(T,p_0,q_0,\nu,n)>0$ so that
\[
\left\|\frac{T^\ast f}{w^{\frac{1}{q_0'}}}\right\|_{L^{q_0',\infty}(w)}\leq c([w]_{A_\infty}\log(e+[w]_{A_1}))^{\frac{1}{q_0'}}\|f\|_{q_0'}
\]
for all $f\in L^{q_0'}$.
\end{theorem}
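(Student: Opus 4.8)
The plan is to pass to the adjoint and reduce everything to an estimate for the sparse bilinear form that dominates it. Since $T$ is linear, $T\in S(p_0,q_0)$ is equivalent to $T^\ast\in S(q_0',p_0')$, so, writing $r:=q_0'\in[1,\infty)$, for every pair $f,g\in\D$ there is an $\eta$-sparse $\Sp=\Sp(f,g)$ with $|\langle T^\ast f,g\rangle|\le c\sum_{Q\in\Sp}\langle f\rangle_{r,Q}\langle g\rangle_{p_0,Q}|Q|$, and by the $3^n$-grids remark we may assume $\Sp\subseteq\D^\alpha$ for a single $\alpha$ (at the cost of a factor $3^n$ in the final constant). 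When $r>1$ I would use the testing characterization $\|h\|_{L^{r,\infty}(w)}\le\sup_{0<w(E)<\infty}w(E)^{-1/r'}\int_E|h|\,w\,\mathrm{d}\mu$ (which is sharp up to a factor $r'$). Applied to $h=T^\ast f/w^{1/r}$ this rewrites the left-hand side of the theorem as $\sup_E w(E)^{-1/r'}\int_E|T^\ast f|\,w^{1/r'}\,\mathrm{d}\mu$, and testing against $g=\operatorname{sgn}(T^\ast f)\,\chi_E w^{1/r'}$, which satisfies $|g|=\chi_E w^{1/r'}$ and $\|g\|_{r'}=w(E)^{1/r'}$, reduces the theorem to the single bound
\[
\sum_{Q\in\Sp}\langle f\rangle_{r,Q}\,\langle \chi_E w^{1/r'}\rangle_{p_0,Q}\,|Q|\le c\big([w]_{A_\infty}\log(e+[w]_{A_1})\big)^{1/r}\|f\|_r\,w(E)^{1/r'} .
\]
In the classical case $r=1$ (that is $q_0=\infty$), where this duality is unavailable, I would instead argue on the distribution function directly, taking a Calder\'on--Zygmund decomposition $f=b+h$ at the appropriate height relative to $w$, arranged so that the (unbounded) operator acts on the `bad' block only through the sparse form, exactly as in the proof of Theorem~\ref{thm:main2}; this again reduces to a sparse-form estimate of the above type.

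The heart of the matter is the displayed sparse-form inequality, which I would establish by the stopping-time machinery already developed for Theorems~\ref{thm:main1} and~\ref{thm:main2}. Choose principal cubes $\mathcal{F}\subseteq\Sp$ on which $\langle f\rangle_{r,Q}$ roughly doubles, so that $\langle f\rangle_{r,Q}\le 2\langle f\rangle_{r,\pi(Q)}$ for the stopping parent $\pi(Q)\in\mathcal{F}$; use $\eta$-sparseness ($E_Q\subseteq Q$ pairwise disjoint, $|Q|\le\eta^{-1}|E_Q|$) to replace each $|Q|$ by $|E_Q|$ and collapse the sum over a stopping block into one localized integral; and feed in the weight through the $A_1$ inequality $\langle w\rangle_Q\le[w]_{A_1}\essinf_Q w$ (equivalently $Mw\le[w]_{A_1}w$) to control the averages $\langle\chi_E w^{1/r'}\rangle_{p_0,Q}$, together with Wilson's $A_\infty$ constant, which enters through the Carleson-type estimate $\sum_{Q\in\Sp,\,Q\subseteq R}w(Q)\lesssim[w]_{A_\infty}w(R)$. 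Summing over $\mathcal{F}$ then produces $w(E)$ times the $A_\infty$ constant.

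The remaining, decisive point is to obtain a \emph{single} logarithm of $[w]_{A_1}$ rather than a power. For this I would group the principal cubes according to the dyadic size of $\langle f\rangle_{r,Q}$ and bound each group using the strong $L^q$ bound for the sparse bilinear form with $q$ slightly above $q_0'$: by the sharp form of Proposition~\ref{prop:sparseim} the relevant constant degenerates only polynomially in $1/(q-q_0')$, through the factor $(q/q_0')'\sim q_0'/(q-q_0')$. Choosing $q-q_0'\sim 1/\log(e+[w]_{A_1})$ and absorbing the resulting factor $[w]_{A_1}^{\,q-q_0'}\lesssim 1$ by the reverse-H\"older self-improvement of $A_\infty$ weights (the estimate~\eqref{eq:ainftyregest} and Proposition~\ref{prop:weightprop}) leaves exactly the factor $\log(e+[w]_{A_1})$. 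I expect this optimization to be the main obstacle, and it is precisely where the hypothesis $w\in A_1$ (not merely $w\in A_\infty$) and the exact dependence of the strong-type constants on the averaging exponents are used. A related technical point, also emphasized in the paper, is that one cannot simply pass to a fixed sparse operator — such an operator need not be of weak type $(q_0',q_0')$ — so the adaptivity $\Sp=\Sp(f,g)$ and the explicit shape of the test functions $\chi_E w^{1/r'}$ must be kept throughout the argument.
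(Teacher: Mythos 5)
Your reduction is set up in the same orientation as the paper's proof (the $L^{q_0'}$ function $f$ sits in the $q_0'$-average slot, the test object $\chi_E w^{1/q_0}$ in the $p_0$-slot), the easy direction of the testing characterization of $L^{q_0',\infty}(w)$ that you invoke for $q_0'>1$ is correct, and your final idea of optimizing an auxiliary exponent at distance $\simeq 1/\log(e+[w]_{A_1})$ from the endpoint is exactly the paper's last step (the choice of $p$ with $q_0-p\simeq(q_0-p_0)/\log(e+[w]_{A_1})$, giving $c_p\lesssim\log(e+[w]_{A_1})^{1/q_0'}$). The genuine gap is the displayed sparse-form inequality itself, with the full set $E$ and the unmodified $f$: this is the entire difficulty, and the sketch you give does not prove it. Principal cubes on $\langle f\rangle_{q_0',Q}$ combined with the Carleson estimate $\sum_{Q\subseteq R}w(Q)\lesssim[w]_{A_\infty}w(R)$ and $\M w\le[w]_{A_1}w$ yield strong-type bounds for exponents strictly between $q_0'$ and $p_0'$; to run your step ``group by dyadic size of $\langle f\rangle_{q_0',Q}$ and use the strong bound at $q$ slightly above $q_0'$'' you need local $L^q$-control of $f$ for some $q>q_0'$, which $\|f\|_{q_0'}<\infty$ does not give, and the number of size-groups depends on $f$ and $E$, not on $[w]_{A_1}$, so there is no way to sum them and be left with a single factor $[w]_{A_1}^{\,q-q_0'}$. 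Moreover, the weighted strong-type estimates actually available near the endpoint (Theorem \ref{thm:main1}, Lemma \ref{lem:main}, \eqref{eq:atwo}) do not produce a vanishing power $[w]_{A_1}^{\,q-q_0'}$ on their own; in the paper the small powers of $[w]_{A_1}$ that the optimization absorbs come precisely from the exceptional-set estimates, not from the strong-type theory. So nothing in your proposal explains how the $f$-side is handled at the endpoint exponent.

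This is exactly what the paper's proof supplies, using structure that your reduction discards. The characterization \eqref{eq:weakwt2} permits shrinking $E$ to $E'=E\setminus\Omega$ with $\Omega=\{\M_w^{\mathscr{B}}(|f|^{q_0'}/w)>2c\,w(E)^{-1}\}$ (note the height depends on $E$); a Whitney/Calder\'on--Zygmund decomposition of $|f|^{q_0'}$ adapted to the dyadic grid (Theorem \ref{thm:whitney}) makes the bad part cancel on every sparse cube meeting $E'$; and after this replacement the $f$-side of Lemma \ref{lem:main} is bounded not by an $L^{p'}$-norm of $f$ but by powers of $w(E)$ and small powers of $[w]_{A_1}$, using $|f|^{q_0'}\lesssim w(E)^{-1}w$ off $\Omega$ and the Whitney-ball comparison with $w\in A_1$, see \eqref{eq:hv2} and \eqref{eq:hv3}. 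Only then does the optimization in $p$ yield $([w]_{A_\infty}\log(e+[w]_{A_1}))^{1/q_0'}$. In short, the Calder\'on--Zygmund mechanism you reserve for the case $q_0'=1$ is in fact what is needed for every $q_0$, and it is incompatible with having already frozen the test function as $\chi_E w^{1/q_0}$ over the full set $E$. To make your route work you would have to reinstate the $E$-dependent exceptional set and the cancellation of the bad part (at which point you are reproducing the paper's proof), or find a genuinely new argument for your displayed inequality; as written the decisive step is missing. A side remark: your claim that a fixed sparse operator need not be of weak type $(q_0',q_0')$ is not accurate in the unweighted setting, since the adjoint sparse form lies in $S(q_0',p_0')$ and Proposition \ref{prop:sparseim} applies to it; the obstruction you face is the weighted endpoint behaviour, not the adaptivity of $\Sp$.
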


 Using the ideas of \cite{luque15}, we then establish optimality of the weighted estimates in terms of the asymptotic behaviour of the unweighted $L^p$ operator norm of $T$ at the endpoints $p=p_0$ and $p=q_0$. We refer to  Definition \ref{def:exponents} for the definition of the exponents $\alpha_T(p_0)$ and $\gamma_T(q_0)$.

\begin{theorem} \label{thm:opti-end}
Let $1\leq p_0<q_0\leq\infty$, let $T\in S(p_0,q_0)$, and let $w\in A_{p/p_0}\cap\RH_{(q_0/p)'}$. Then the exponent in the estimate
\[
\|T\|_{L^p(w) \to L^p(w)}\lesssim[w^{(q_0/p)'}]_{A_{\phi(p)}}^{\max\left(\frac{1}{p-p_0},\frac{q_0-1}{q_0-p}\right)\big/\left(\frac{q_0}{p}\right)'}.
\]
from \cite{frey16} is optimal under the assumption that $\alpha_T(p_0)=1/p_0$ and $\gamma_T(q_0)=1/q_0'$.

Moreover, for $w\in A_1\cap\RH_{(q_0/p)'}$, the exponent in the estimate
\[
\|T\|_{L^p(w)\to L^p(w)}\lesssim[w^{(q_0/p)'}]_{A_1}^{\frac{1}{q_0'}}
\]
from Theorem \ref{thm:main1} is optimal under the assumption that $\gamma_T(q_0)=1/q_0'$.
\end{theorem}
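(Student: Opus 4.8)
The plan is to follow the ``optimality without examples'' strategy of Luque, P\'erez and Rela \cite{luque15}: instead of exhibiting extremal weights, one argues by contradiction and shows that a strictly smaller exponent would force the unweighted $L^r$-operator norm of $T$ to grow too slowly as $r$ tends to an endpoint, in conflict with the hypotheses $\alpha_T(p_0)=1/p_0$ or $\gamma_T(q_0)=1/q_0'$ (Definition \ref{def:exponents}). We work on $\R^n$ with Lebesgue measure, the general doubling case being analogous with the power-type weights $x\mapsto d(x,x_0)^a$ and the doubling dimension $\nu$ in place of $n$. I would begin with two reductions. First, by Proposition \ref{prop:weightprop}, $w\in A_{p/p_0}\cap\RH_{(q_0/p)'}$ is equivalent to $\sigma:=w^{(q_0/p)'}\in A_{\phi(p)}$, and an elementary computation using $(q_0/p)'=q_0/(q_0-p)$ gives
\[
\frac{1}{p-p_0}\cdot\frac{1}{(q_0/p)'}=\frac{1}{p_0(\phi(p)-1)},\qquad
\frac{q_0-1}{q_0-p}\cdot\frac{1}{(q_0/p)'}=\frac{1}{q_0'},
\]
so that the claimed exponent on $[\sigma]_{A_{\phi(p)}}$ is $\max\big(\tfrac{1}{p_0(\phi(p)-1)},\tfrac1{q_0'}\big)$. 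It therefore suffices to rule out, separately, any exponent below $\tfrac{1}{p_0(\phi(p)-1)}$ (the \emph{$p_0$-endpoint part}, driven by $\alpha_T(p_0)$) and any exponent below $\tfrac1{q_0'}$ (the \emph{$q_0$-endpoint part}, driven by $\gamma_T(q_0)$); the latter argument simultaneously proves the optimality statement for Theorem \ref{thm:main1}. Second, since $T\in S(p_0,q_0)$ iff $T^\ast\in S(q_0',p_0')$ and the estimate of \cite{frey16} for $T$ is dual to that for $T^\ast$ (with $\gamma_T(q_0)=\alpha_{T^\ast}(q_0')$), the two parts are dual to one another, although it is just as quick to carry both out directly.

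For the $p_0$-endpoint part I would test the hypothetical estimate against the one-parameter family of homogeneous weights $v_\delta(x)=|x|^{a_\delta}$, $0<\delta<1$, with $a_\delta=n(p/p_0-1)(1-\delta)$, chosen so that $\sigma_\delta:=v_\delta^{(q_0/p)'}=|x|^{n(\phi(p)-1)(1-\delta)}$ lies just inside $A_{\phi(p)}$. The elementary facts needed are: $v_\delta\in A_{p/p_0}\cap\RH_{(q_0/p)'}$; the $A_{\phi(p)}$-characteristic of $\sigma_\delta$ blows up at the critical rate $[\sigma_\delta]_{A_{\phi(p)}}\simeq\delta^{-(\phi(p)-1)}$; and the unweighted exponent $p_\delta$ determined by $\tfrac1{p_\delta}=\tfrac1p+\tfrac{a_\delta}{np}$ satisfies $p_\delta\downarrow p_0$ with $p_\delta-p_0\simeq\delta$. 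The technical heart of the proof, and what I expect to be the main obstacle, is a transference estimate in the spirit of \cite{luque15},
\[
\|T\|_{L^{p_\delta}\to L^{p_\delta}}\le c\,\|T\|_{L^p(v_\delta)\to L^p(v_\delta)},
\]
with $c$ independent of $\delta$: conjugation by the homogeneous weight $|x|^{a_\delta}$ converts the unweighted $L^{p_\delta}$-theory into the weighted $L^p(v_\delta)$-theory. Establishing this for a general sparsely dominated $T$---which need not be an integral operator and need not commute with dilations---is delicate, and I would combine the homogeneity of $v_\delta$, a dilation-averaging argument, and the dilation invariance of the class $S(p_0,q_0;\mu)$ under dyadic dilations (immediate from \eqref{eq:sumsparse}, which preserves both constants) to push it through.

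Granting the transference estimate the proof closes quickly. Suppose, for a contradiction, that $\|T\|_{L^p(w)\to L^p(w)}\le C[w^{(q_0/p)'}]_{A_{\phi(p)}}^{\gamma'}$ held for all admissible $w$ with $\gamma'<\tfrac{1}{p_0(\phi(p)-1)}$. Taking $w=v_\delta$ and inserting the transference estimate together with the lower bound on $\|T\|_{L^{p_\delta}\to L^{p_\delta}}$ furnished by $\alpha_T(p_0)=1/p_0$, we obtain along a suitable sequence $\delta\to0$
\[
\delta^{-1/p_0+o(1)}\lesssim\|T\|_{L^{p_\delta}\to L^{p_\delta}}\lesssim\|T\|_{L^p(v_\delta)\to L^p(v_\delta)}\le C[\sigma_\delta]_{A_{\phi(p)}}^{\gamma'}\simeq C\,\delta^{-\gamma'(\phi(p)-1)},
\]
which forces $\gamma'(\phi(p)-1)\ge 1/p_0$, contradicting the choice of $\gamma'$.

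The $q_0$-endpoint part runs in parallel, testing instead against $v_\delta(x)=|x|^{-\beta_\delta}$ with $\beta_\delta=\tfrac{n(q_0-p)}{q_0}(1-\delta)$, so that $v_\delta^{(q_0/p)'}=|x|^{-n(1-\delta)}$ approaches the boundary of $A_1$ from the local-integrability side; here $[v_\delta^{(q_0/p)'}]_{A_1}\simeq[v_\delta^{(q_0/p)'}]_{A_{\phi(p)}}\simeq\delta^{-1}$ and the associated unweighted exponent $q_\delta$ satisfies $q_\delta\uparrow q_0$, $q_0-q_\delta\simeq\delta$. The same transference estimate, now with $q_\delta$, together with $\gamma_T(q_0)=1/q_0'$, forces the exponent on $[v_\delta^{(q_0/p)'}]_{A_{\phi(p)}}$---and likewise on $[v_\delta^{(q_0/p)'}]_{A_1}$, which yields the optimality in Theorem \ref{thm:main1}---to be at least $1/q_0'$. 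Finally, the degenerate cases only simplify matters: when $q_0=\infty$ one has $(q_0/p)'=1$, $\RH_{(q_0/p)'}$ is trivial, $\phi(p)=p/p_0$, and the $q_0$-endpoint data at $\infty$ is handled by the same dilation argument with $q_\delta\to\infty$; when $p_0=1$ one works directly with $A_p$ and the weight $|x|^{a_\delta}$, $a_\delta\uparrow n(p-1)$.
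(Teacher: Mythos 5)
Your overall framing (deduce optimality from the asymptotics $\alpha_T(p_0)$, $\gamma_T(q_0)$ in the spirit of \cite{luque15}) is the right one, and your reduction of the exponent to $\max\bigl(\tfrac{1}{p_0(\phi(p)-1)},\tfrac1{q_0'}\bigr)$ is correct. But the argument has a genuine gap at exactly the point you flag as its ``technical heart'': the transference estimate $\|T\|_{L^{p_\delta}\to L^{p_\delta}}\le c\,\|T\|_{L^p(v_\delta)\to L^p(v_\delta)}$, with $c$ independent of $\delta$, for the power weights $v_\delta$. For a general $T\in S(p_0,q_0)$ there is no conjugation or dilation-averaging mechanism available: sparse domination gives no kernel lower bounds, no homogeneity, and no commutation with dilations (dilation invariance of the \emph{class} $S(p_0,q_0)$ says nothing about the individual operator $T$), and a H\"older/conjugation attempt fails globally because $v_\delta^{-1/p}$ is in no Lebesgue space. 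Transference of this type is classically proved by testing $T$ on explicit functions adapted to $|x|^{-n/p}$ using kernel positivity/lower bounds, which is precisely the information unavailable here. Moreover, your proposal is internally at odds with the \cite{luque15} strategy you invoke: that strategy (and the paper's proof) is designed to \emph{avoid} extremal weights altogether. Finally, the theorem is proved in the paper in the general doubling metric measure setting of Section \ref{sec:ext}, where neither dilations nor exact power weights exist, so the proposed repair has no analogue there.

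The paper instead proves two intermediate results (Theorems \ref{thm:ap-opt} and \ref{thm:a1-opt}) by the Rubio de Francia iteration algorithm: given an arbitrary $f\in L^p$ (resp. a dualizing $h\in L^{(p/s)'}$ near the $q_0$ endpoint), one builds $\calR f$ via the maximal operator, so that $(\calR f)^{p_0}$ (resp. $(\calR h)^{(q_0/s)'}$) is an $A_1$ weight with constant controlled by $\|\M\|$, and then feeds the hypothesised weighted bound into this majorant. Using only \eqref{eq:maxfinsec}, this converts the assumed exponent $\beta$ directly into the unweighted growth bounds $\|T\|_{L^p\to L^p}\lesssim (p-p_0)^{-\beta\frac{s-p}{p_0}}$ as $p\to p_0$ and $\|T\|_{L^p\to L^p}\lesssim (q_0-p)^{-\beta/(q_0/s)'}$ as $p\to q_0$, whence $\beta\ge\max\bigl(\tfrac{p_0}{s-p_0}\alpha_T(p_0),(\tfrac{q_0}{s})'\gamma_T(q_0)\bigr)$; combined with the upper bound from \eqref{eq:atwo} and the assumptions $\alpha_T(p_0)=1/p_0$, $\gamma_T(q_0)=1/q_0'$, this yields Theorem \ref{thm:opti-end}. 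If you want to salvage your outline, replace the power-weight testing and the transference claim by this iteration argument; the power weights are neither needed nor, for general sparsely dominated operators, usable.
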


In, e.g., the example of the Riesz transform on two copies of $\R^n$ glued smoothly along their unit circles \cite{CCH06}, it is known that $q_0=n$ and $\gamma_T(q_0)=(n-1)/n$, and thus the weighted estimate is optimal. See Example \ref{ex:twocopies}.

\subsection{Examples}\label{subsec:examples}
There is a wealth of examples of sparsely dominated operators. Other than the class of Calder\'on-Zygmund operators, our main examples can be found in \cite[Section 3]{frey16}. See also the earlier work \cite{auscher072}. We point out several examples of particular interest here.
\begin{example}[Riesz transform associated  with  elliptic second order divergence form operators]
Let $A$ be a complex, bounded,  measurable  matrix-valued function in $\R^n$ satisfying the ellipticity condition $\re(A(x)\xi\cdot\overline{\xi})\geq\lambda|\xi|^2$ for all $\xi\in\C^n$ and a.e. $x\in\R^n$. Then one can define a maximal accretive operator
\[
Lf:=-\divv(A\nabla f)
\]
which generates a semigroup $(e^{-tL})_{t>0}$. If both the semigroup and the family $(\sqrt{t}\nabla e^{-tL})_{t>0}$ satisfy $L^{p_0}$--$L^{q_0}$ off-diagonal estimates, then the Riesz transform $\mathscr{R}:=\nabla L^{-1/2}$ is in the class $S(p_0,q_0)$. In particular we point out that if we are using the Lebesgue measure in dimension $\nu=n=1$, we have $p_0=1$ and $q_0=\infty$ so that $\mathscr{R}\in S(1,\infty)$. We refer the reader to \cite{auscher07} for more values of $p_0$ and $q_0$ in other dimensions in the Euclidean setting and to \cite{frey16} for details on the sparse domination result.
\end{example}
\begin{example}[Riesz transform associated to Neumann Laplacian]\label{ex:neumannlaplacian}
Suppose $\Delta$ is the Laplace operator associated with Neumann boundary conditions in a bounded convex doubling domain in $\R^n$. As studied in \cite{wang13}, the Riesz transform $\nabla\Delta^{-1/2}$ will not in general have a kernel satisfying pointwise regularity estimates and is thus not in the class of Calder\'on-Zygmund operators. However, this operator does belong to the class $S(1,\infty)$ and will therefore satisfy the bound \eqref{eq:inftyonemix}. Note that for this example we need to apply our results to a metric measure space other than $\R^n$. We refer the reader to Section \ref{sec:ext} for an overview of the theory in bounded domains.
\end{example}
\begin{example}[Fourier multipliers]
Let $m$ be the function in $\R^n$ defined by $m(\xi)=1-|\xi|^2$ for $|\xi|\leq 1$ and $m(\xi)=0$ elsewhere. For $\delta\geq 0$, the Bochner-Riesz operator $\mathcal{B}^\delta$ is defined as the Fourier multiplier $\mathcal{B}^\delta f:=(m^\delta\hat{f})^{\vee}$. Then, for any $\delta>0$ there exists a $1<p_0<2$ so that for any $0<\eps<2-p_0$ we have $\mathcal{B}^\delta\in S(p_0+\eps, 2)$. For details we refer the reader to \cite{benea171}.
\end{example}
\section{Preliminaries}
\subsection{Notation}
For $1\leq p<\infty$ we denote by $\M_p$ the uncentered dyadic maximal operator
\[
\M_pf:=\sup_{Q\in\D^\alpha}\langle f\rangle_{p,Q}\chi_Q,
\]
where it will be made clear from the context which dyadic grid $\D^\alpha$ we are considering, and where we will write $\M:=\M_1$. Similarly we define $\M_p^{\mathscr{B}}$ and $\M^{\mathscr{B}}$ to be the uncentered maximal operators with respect to balls rather than cubes.

We list some of the basic definitions and facts about weights. A measurable function $w:\R^n\to(0,\infty)$ is called a weight. We identify a weight $w$ with a Borel measure by setting
\[
w(E):=\int_E\!w\,\mathrm{d}\mu
\]
for all measurable sets $E\subseteq\R^n$. For $1\leq p\leq\infty$ we respectively denote by $L^p(w)$ and $L^{p,\infty}(w)$ the Lebesgue and weak Lebesgue spaces with measure $w$.

For $1\leq p<\infty$ we say that $w\in A_p$ if
\[
[w]_{A_p}:=\sup_{Q\in\D}\langle w\rangle_{1,Q}\langle w^{-1}\rangle_{p'-1,Q}<\infty,
\]
where for $p=1$ we use the limiting interpretation $\langle w^{-1}\rangle_{p'-1,Q}=\langle w^{-1}\rangle_{\infty,Q}=(\essinf_Q w)^{-1}$. We say that $w\in A_\infty$ if its Wilson $A_\infty$ constant is finite, that is, if
\[
[w]_{A_\infty}:=\sup_{\text{$B$ a ball}}\frac{1}{w(B)}\int_B\!\M^{\mathscr{B}}(w\chi_B)\,\mathrm{d}\mu<\infty,
\]
where the supremum is taken over all balls $B\subseteq\R^n$. For an overview of this constant we refer the reader to \cite{hytonen122} and references therein. In particular we point out that for a dimensional constant $c=c(n,\nu)>0$ we have
\begin{equation}\label{eq:ainftyregest}
c[w]_{A_\infty}\leq[w]_{A_p}\leq[w]_{A_q},\quad 1\leq q<p<\infty,
\end{equation}
where the first inequality here can be found in \cite[Proposition 2.2]{hytonen13} while the second one follows from H\"{o}lder's inequality.

For $1<s\leq\infty$ we say that $w\in\RH_s$ if
\[
[w]_{\RH_s}:=\sup_{Q\in\D}\langle w\rangle_{s,Q}\langle w\rangle^{-1}_{1,Q}<\infty.
\]
For $s=1$ we will use the interpretation $\RH_1=A_\infty$, where we set $[w]_{\RH_1}:=1$.

We provide some facts about the classes $A_1$ and $A_\infty$ that we will use.
\begin{proposition}\label{prop:weightprop}
\begin{enumerate}[(i)]
\item $A_q=\bigcup_{1\leq p<q}A_p$ for $1<q\leq\infty$ and $\RH_s=\bigcup_{s<r\leq\infty}\RH_r$ for $1\leq s<\infty$. In particular we have $w\in A_\infty$ if and only if $w\in A_p$ for some $1\leq p<\infty$.
\item For $1\leq p<\infty$, $1\leq s<\infty$ we have $w\in A_p\cap\RH_s$ if and only if $w^s\in A_{s(p-1)+1}$. Moreover, we have
\[
[w^s]_{A_{s(p-1)+1}}\leq\left([w]_{A_p}[w]_{\RH_s}\right)^s.
\]
\item There are constants $c,\kappa>0$ depending only on the doubling dimension $\nu$, so that for every $w\in A_1$ we have
\[
\M^{\mathscr{B}}_q w\leq c[w]_{A_1}w\quad\text{for }1\leq q\leq 1+\frac{1}{\kappa[w]_{A_\infty}}.
\]
\end{enumerate}
\end{proposition}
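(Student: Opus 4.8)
The plan is to treat the three items separately, proving (ii) first since it is self-contained and is reused in (i), and then (i) and (iii). Throughout, passing between balls, arbitrary cubes, and the dyadic cubes of $\D$ costs only a constant depending on $\nu$ and $n$, by the covering property recorded after the definition of $\D$. For (ii), write $r:=s(p-1)+1$, so that $r-1=s(p-1)$ and hence $r'-1=\tfrac1{r-1}=\tfrac1s(p'-1)$, and note the pointwise-in-$Q$ identity $\langle w^{-s}\rangle_{r'-1,Q}=\langle w^{-1}\rangle_{p'-1,Q}^s$, which is immediate from $\langle f\rangle_{t,Q}^t=\langle|f|^t\rangle_{1,Q}$. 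For the implication $w\in A_p\cap\RH_s\Rightarrow w^s\in A_r$ one combines this with $\langle w^s\rangle_{1,Q}=\langle w\rangle_{s,Q}^s\le[w]_{\RH_s}^s\langle w\rangle_{1,Q}^s$ to get
\[
\langle w^s\rangle_{1,Q}\langle w^{-s}\rangle_{r'-1,Q}\le[w]_{\RH_s}^s\big(\langle w\rangle_{1,Q}\langle w^{-1}\rangle_{p'-1,Q}\big)^s\le\big([w]_{A_p}[w]_{\RH_s}\big)^s
\]
and takes the supremum over $Q$, which gives both the inclusion and the quantitative bound. For the converse, assume $w^s\in A_r$: Jensen's inequality ($s\ge1$) gives $\langle w\rangle_{1,Q}\le\langle w^s\rangle_{1,Q}^{1/s}$, so $\langle w\rangle_{1,Q}\langle w^{-1}\rangle_{p'-1,Q}\le(\langle w^s\rangle_{1,Q}\langle w^{-s}\rangle_{r'-1,Q})^{1/s}\le[w^s]_{A_r}^{1/s}$, i.e.\ $w\in A_p$; and since H\"older's inequality gives $\langle w^{-1}\rangle_{p'-1,Q}^{-1}\le\langle w\rangle_{1,Q}$, the $A_r$ condition rewrites as $\langle w^s\rangle_{1,Q}\le[w^s]_{A_r}\langle w^{-s}\rangle_{r'-1,Q}^{-1}=[w^s]_{A_r}\langle w^{-1}\rangle_{p'-1,Q}^{-s}\le[w^s]_{A_r}\langle w\rangle_{1,Q}^s$, i.e.\ $w\in\RH_s$.

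For (i), the inclusions ``$\supseteq$'' are immediate: from \eqref{eq:ainftyregest} for $A_q$, and from Jensen's inequality (monotonicity of $r\mapsto\langle w\rangle_{r,Q}$) for $\RH_s$. For ``$\subseteq$'' in the $A_q$ statement with $1<q<\infty$, I would pass to the dual weight $\sigma:=w^{1-q'}$, which satisfies $[\sigma]_{A_{q'}}=[w]_{A_q}^{q'-1}<\infty$, hence lies in $A_\infty$; applying the reverse H\"older inequality for $A_\infty$ weights to $\sigma$ yields $\eps>0$ with $\langle\sigma\rangle_{1+\eps,Q}\le2\langle\sigma\rangle_{1,Q}$ for all $Q$, and translating this through the exponent computation of (ii) gives $w\in A_{q-\delta}$ for a suitable $\delta>0$ with $1\le q-\delta<q$. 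The case $q=\infty$, which is also the last assertion of (i), is the classical identification of the Fujii--Wilson class $A_\infty$ with $\bigcup_{1\le p<\infty}A_p$ (see \cite{hytonen122}; its nontrivial direction again rests on the reverse H\"older inequality). For ``$\subseteq$'' in the $\RH_s$ statement, I would first observe that $\RH_s\subseteq A_\infty$: for $s>1$ this follows from the $L^s$-boundedness of the maximal operator since
\[
\frac1{w(B)}\int_B\M^{\mathscr{B}}(w\chi_B)\,\mathrm{d}\mu\lesssim\frac{|B|^{1/s'}\|w\chi_B\|_{L^s}}{w(B)}\lesssim\frac{|B|\langle w\rangle_{s,B}}{w(B)}\lesssim1
\]
uniformly over balls $B$, while for $s=1$ it is the definition $\RH_1=A_\infty$; by the part of (i) already proven, $w\in A_p$ for some $p$, so by (ii) $w^s\in A_{s(p-1)+1}\subseteq A_\infty$, whence $w^s$ satisfies a reverse H\"older inequality $\langle w^{s(1+\delta)}\rangle_{1,Q}^{1/(1+\delta)}\lesssim\langle w^s\rangle_{1,Q}$ for some $\delta>0$, which is precisely $w\in\RH_{s(1+\delta)}$ with $s(1+\delta)>s$.

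For (iii), I would combine two ingredients. The first is the quantitative sharp reverse H\"older inequality for Fujii--Wilson $A_\infty$ weights (from \cite{hytonen13}, valid with $\nu$ replacing $n$ in the doubling setting): there is $\kappa=\kappa(\nu)$ such that every $w\in A_\infty$ obeys $\langle w\rangle_{1+\eps,B}\le2\langle w\rangle_{1,B}$ for all balls $B$ whenever $0<\eps\le\tfrac1{\kappa[w]_{A_\infty}}$. The second is the pointwise estimate $\M^{\mathscr{B}}w\le c[w]_{A_1}w$ a.e.\ for a dimensional $c$, obtained from the $A_1$ condition applied to a dyadic cube containing a given ball (costing only a doubling factor) together with the Lebesgue differentiation theorem. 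Since $A_1\subseteq A_\infty$ by \eqref{eq:ainftyregest}, for $w\in A_1$ and $1\le q\le1+\tfrac1{\kappa[w]_{A_\infty}}$ Jensen's inequality and the sharp reverse H\"older inequality then give $\M^{\mathscr{B}}_qw\le\M^{\mathscr{B}}_{1+\eps}w\le2\M^{\mathscr{B}}w\le2c[w]_{A_1}w$ a.e. The only genuinely non-elementary inputs are thus the (qualitative and sharp quantitative) reverse H\"older inequalities for $A_\infty$ weights, which I would take from the cited literature; the real work then lies in the exponent bookkeeping of (ii) --- especially its converse direction --- and in checking that every passage between balls and dyadic cubes is absorbed into a constant depending only on $\nu$ and $n$.
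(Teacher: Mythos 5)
Your overall route is sound, and it is worth noting that the paper itself proves very little here: for (i) it cites Grafakos and Wilson, for (ii) it cites Johnson--Neugebauer, and only (iii) is argued, by combining the sharp reverse H\"older inequality of \cite[Theorem 1.1]{hytonen122} (in the form $\langle w\rangle_{q(w),B}\leq c\langle w\rangle_{1,2B}$ with $q(w)=1+1/(\kappa[w]_{A_\infty})$) with H\"older's inequality and the definition of $A_1$. Your proof of (iii) is essentially the paper's: quoting the reverse H\"older inequality without the dilated ball $2B$ is the Euclidean formulation, but this is immaterial, since the $A_1$ condition applied to a dyadic cube containing $2B$ absorbs the dilation exactly as in the paper, and your intermediate pointwise bound $\M^{\mathscr{B}}w\leq c[w]_{A_1}w$ is not even needed. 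Your computation for (ii), including the exponent identity $\langle w^{-s}\rangle_{r'-1,Q}=\langle w^{-1}\rangle_{p'-1,Q}^s$ and both directions of the equivalence with the stated quantitative bound, is correct (just read the case $p=1$ with the $\essinf$ convention), and the openness of $A_q$ via the dual weight $\sigma=w^{1-q'}$ and its reverse H\"older inequality is the standard argument. So where the paper cites, you prove; that is a legitimate and more informative route.

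The one step I would not accept as written is the blanket claim that passing between balls and the cubes of $\D$ ``costs only a constant,'' at the specific point where you invoke it for the reverse H\"older class. For $A_p$ the transfer is indeed formal: both factors in the $A_p$ product are averages of nonnegative functions, so enlarging a ball $B$ to a containing dyadic cube $Q$ with $|Q|\lesssim|B|$ only costs powers of $|Q|/|B|$. But in your display showing $\RH_s\subseteq A_\infty$ you use $\langle w\rangle_{s,B}\lesssim\langle w\rangle_{1,B}$ for \emph{balls}, while $[w]_{\RH_s}$ is a supremum over dyadic cubes; running $B\subseteq Q$ gives $\langle w\rangle_{s,B}\lesssim\langle w\rangle_{1,Q}$, and to conclude you would need $w(Q)\lesssim w(B)$, i.e.\ a doubling property of $w$, which at that stage is precisely what is not yet known (it is a consequence of the $A_\infty$ membership you are trying to establish). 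The gap is fixable without new ideas: for instance, work grid by grid and note that the dyadic $\RH_s$ condition immediately gives $w(E)\leq[w]_{\RH_s}\left(|E|/|Q|\right)^{1/s'}w(Q)$ for all measurable $E\subseteq Q\in\D^\alpha$, a standard $A_\infty$ criterion which yields $w\in A_p$ for some $p$, after which the passage to balls is harmless because the $A_p$ condition transfers and gives doubling of $w$; alternatively, prove the self-improvement $\RH_s\subseteq\RH_{s(1+\delta)}$ directly by a Gehring-type stopping-time argument inside a fixed dyadic cube, which never leaves the grid. (Your later use of the reverse H\"older inequality for $w^s$ on dyadic cubes is fine, since by then $w^s\in A_{s(p-1)+1}$ is known to be doubling.) With this point repaired, the proposal is a complete proof.
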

\begin{proof}
For (i) we refer the reader to \cite{grafakosmodern, wilson87}. Property (ii) can be found in \cite{johnson91}.

Property (iii) is a consequence of \cite[Theorem 1.1]{hytonen122}. Indeed, this result states that there are constants $c,\kappa>0$ depending only on $\nu$ such that for any ball $B$ we have $\langle w\rangle_{q(w),B}\leq c\langle w\rangle_{1,2B}$, where $q(w):=1+1/(\kappa[w]_{A_\infty})$. Thus, (iii) follows from H\"older's inequality and the definition of $A_1$.
\end{proof}

\subsection{Weighted boundedness of sparsely dominated operators}
We wish to give some heuristic arguments as to why we can expect certain weighted boundedness of sparsely dominated operators. We start with the following observation:
\begin{proposition}\label{prop:sparseim}
Let $1\leq p_0<q_0\leq\infty$ and $T\in S(p_0,q_0)$. Then $T$ is of strong type $(p,p)$ for all $p_0<p<q_0$ and of weak type $(p_0,p_0)$.
\end{proposition}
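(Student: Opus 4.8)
The plan is to reduce everything to a boundedness statement for the sparse form on the right-hand side of \eqref{eq:sumsparse}, and then exploit the gain of integrability coming from the exponents $p_0<p<q_0$. Fix a dyadic grid $\D^\alpha$; by the remark following the definition it suffices to bound
\[
\Lambda_{\Sp}(f,g):=\sum_{Q\in\Sp}\langle f\rangle_{p_0,Q}\langle g\rangle_{q_0',Q}|Q|
\]
uniformly over $\eta$-sparse $\Sp\subseteq\D^\alpha$, since $|\langle Tf,g\rangle|\leq c\sum_{\alpha}\Lambda_{\Sp\cap\D^\alpha}(f,g)$ and there are only $3^n$ grids. For the strong type $(p,p)$ bound with $p_0<p<q_0$, I would estimate $\langle f\rangle_{p_0,Q}\leq \inf_Q \M_{p_0}f$ and $\langle g\rangle_{q_0',Q}\leq\inf_Q\M_{q_0'}g$, replace $|Q|\leq\eta^{-1}|E_Q|$ using the sparseness sets $(E_Q)_{Q\in\Sp}$, and use their pairwise disjointness to collapse the sum:
\[
\Lambda_{\Sp}(f,g)\leq\eta^{-1}\sum_{Q\in\Sp}\Big(\inf_{E_Q}\M_{p_0}f\Big)\Big(\inf_{E_Q}\M_{q_0'}g\Big)|E_Q|\leq\eta^{-1}\int_{\R^n}\M_{p_0}f\,\M_{q_0'}g\,\mathrm{d}\mu.
\]
Then Hölder's inequality with exponents $p$ and $p'$ gives $\Lambda_{\Sp}(f,g)\lesssim\|\M_{p_0}f\|_p\|\M_{q_0'}g\|_{p'}$, and since $p>p_0$ the maximal operator $\M_{p_0}=(\M(|\cdot|^{p_0}))^{1/p_0}$ is bounded on $L^p$, while since $p'>q_0'$ (equivalently $p<q_0$) the operator $\M_{q_0'}$ is bounded on $L^{p'}$; both use the standard weak-type $(1,1)$ of $\M$ on doubling spaces plus Marcinkiewicz. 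Taking the supremum over $g$ with $\|g\|_{p'}\leq1$ yields $\|Tf\|_p\lesssim_{T,n,\nu}\|f\|_p$ on the dense class $\D$, and $T$ extends by density.

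For the weak type $(p_0,p_0)$ bound, duality in $L^{p_0}$ does not work directly since we cannot test against $L^{p_0'}$, so instead I would run the standard Calderón–Zygmund/good-$\lambda$ argument adapted to the sparse form. Given $f$ and $\lambda>0$, one wants to show $|\{|Tf|>\lambda\}|\lesssim\lambda^{-p_0}\|f\|_{p_0}^{p_0}$. The cleanest route: bound the sparse operator $\mathcal{A}_\Sp f:=\big(\sum_{Q\in\Sp}\langle f\rangle_{p_0,Q}^{\,p_0}\chi_Q\big)^{1/p_0}$ pointwise — it dominates the form via $\langle g\rangle_{q_0',Q}\le\langle g\rangle_{\infty,Q}$ when $g$ is a normalized test function, but more robustly one shows $\mathcal{A}_\Sp$ is weak type $(p_0,p_0)$ by the same disjointness trick: $\sum_Q\langle f\rangle_{p_0,Q}^{p_0}|Q|\leq\eta^{-1}\sum_Q\langle\M(|f|^{p_0})\rangle\cdot|E_Q|\leq\eta^{-1}\|\M(|f|^{p_0})\|_{L^1}$, but $\M$ is only weak $(1,1)$, so this gives weak type. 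Alternatively, and this is what I expect the paper does, one directly estimates $|\langle Tf,g\rangle|$ for $g$ supported on a set $E$ of finite measure with $\|g\|_\infty\le1$, splitting $f=f\chi_{\{|f|\le\lambda\}}+f\chi_{\{|f|>\lambda\}}$ or performing a Calderón–Zygmund decomposition at height $\lambda^{p_0}$ for $|f|^{p_0}$, and controlling the resulting pieces using that $\M_{q_0'}$ maps into appropriate spaces.

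The main obstacle is the weak type $(p_0,p_0)$ endpoint: since $p_0\ge1$ there is no room for a Hölder gain on the $f$-side of the sparse form, so the argument is genuinely of weak-type/Calderón–Zygmund flavour rather than a soft duality argument. One must handle the sum over $\Sp$ carefully — typically splitting $\Sp$ into the cubes that meet the level set $\{\M(|f|^{p_0})>\lambda^{p_0}\}$ and those that don't, using sparseness and the weak $(1,1)$ bound for $\M$ on the former and summing a geometric-type series on the latter — and one must check that the auxiliary maximal operator $\M_{q_0'}$ acting on the (normalized, bounded) test function $g$ produces a genuinely bounded contribution, which is exactly where $q_0>p_0$ is used. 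Everything else is routine once the grid decomposition and the sparseness-disjointness reduction are in place.
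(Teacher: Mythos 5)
Your strong type $(p,p)$ argument is exactly the paper's: the sparseness/disjointness reduction to $\int\M_{p_0}f\,\M_{q_0'}g\,\mathrm{d}\mu$, H\"older, and the $L^p$-boundedness of $\M_{p_0}$ and $\M_{q_0'}$ for $p>p_0$, $p'>q_0'$. The weak type $(p_0,p_0)$ half, however, has a genuine gap. Your ``cleanest route'' does not work as stated: the hypothesis is a bound on the bilinear form with a sparse family depending on the pair $(f,g)$, so no pointwise domination $|Tf|\lesssim\mathcal{A}_{\Sp}f$ is available; moreover the display $\sum_{Q}\langle f\rangle_{p_0,Q}^{p_0}|Q|\leq\eta^{-1}\|\M(|f|^{p_0})\|_{L^1}$ is vacuous because $\M(|f|^{p_0})\notin L^1$ unless $f=0$, and the conclusion ``$\M$ is only weak $(1,1)$, so this gives weak type'' is a non sequitur: an $L^1$-type bound on that sum, even if it were finite, yields no weak-type estimate for $T$ (nor for $\mathcal{A}_{\Sp}$; weak $(1,1)$ of the $\ell^1$-sparse operator is itself a Calder\'on--Zygmund-type fact, not a consequence of the disjointness trick). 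Your second route (a Calder\'on--Zygmund decomposition of $|f|^{p_0}$) is the right direction, but you never say how the bad part is controlled, and that is precisely the crux here, since no H\"ormander/kernel regularity is assumed.

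The paper's mechanism, which is absent from your sketch, is the following. One uses the dual characterization $\|T\|_{L^{p_0}\to L^{p_0,\infty}}\simeq\sup_{\|f\|_{p_0}=1}\sup_{E}\inf_{E'\subseteq E,\,|E|\leq2|E'|}\sup_{|h|\leq\chi_{E'}}|E|^{1/p_0-1}|\langle Tf,h\rangle|$; the inf over $E'$ is essential because one takes $E'=E\setminus\Omega$ with $\Omega=\{\M^{\mathscr{B}}(|f|^{p_0})>K|E|^{-1}\}$, i.e.\ one is allowed to avoid the exceptional set. Then the Calder\'on--Zygmund decomposition $|f|^{p_0}=g+\sum_{P\in\mathscr{P}}b_P$ (at height $\simeq|E|^{-1}$) is taken in the \emph{same} dyadic grid $\D^\alpha$ as the sparse collection. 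Only cubes $Q\in\Sp$ with $Q\cap E'\neq\emptyset$ contribute, since otherwise $\langle h\rangle_{q_0',Q}=0$; and if such a $Q$ meets a CZ cube $P$, then $P\subseteq Q$ (if instead $Q\subseteq P$, then $Q\cap E'\subseteq P\cap E'=\emptyset$, a contradiction), so $\int_Q b_P\,\mathrm{d}\mu=0$ and $\langle f\rangle_{p_0,Q}=\big(\frac{1}{|Q|}\int_Q g\,\mathrm{d}\mu\big)^{1/p_0}$: the bad part cancels completely, with no regularity of $T$ used. One then reruns the strong-type estimate at an intermediate exponent $q\in(p_0,q_0)$ for the pair $(|g|^{1/p_0},h)$ and concludes from $\|g\|_\infty\lesssim|E|^{-1}$, $\|g\|_1\lesssim1$. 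Your proposed alternative of splitting $\Sp$ according to whether cubes meet a level set and invoking ``sparseness, weak $(1,1)$ of $\M$, and a geometric series'' never assembles into an estimate of $|\langle Tf,h\rangle|$, so the endpoint claim remains unproved in your write-up.
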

The verification of the strong boundedness is by now standard, see also \cite{cruz12}. While the weak type boundedness should be well-known, we could not find a precise reference for the cases where $p_0>1$. For the case $p_0=1$ we refer the reader to \cite[Theorem E]{condealonso16}, see also \cite[Proposition 6]{benea17}. For completeness we give a proof of the general case here, which we defer to the end of this section.

We will show that if an operator $T$ lies in $S(p_0,q_0;\mu)$, then $T$ must also lie in $S(q_-,q_+;w)$ for appropriate weights $w$, and for certain $q_-<q_+$ depending on $w$. Then Proposition \ref{prop:sparseim} implies that $T$ satisfies weighted boundedness.

First we note that if we have a sparse collection $\Sp\subseteq\D$ with respect to the reference measure $\mu$, then $\Sp$ is also sparse with respect to all weights $w\in A_\infty$. Indeed, suppose $w\in A_p$ for some $1\leq p<\infty$ and suppose $\Sp$ is $\eta$-sparse with $(E_Q)_{Q\in\Sp\cap\D^\alpha}$ as one of the associated pairwise disjoint collections. Then, by H\"{o}lder's inequality (use the first equation in \eqref{eq:holweight} below with $f=\chi_{E_Q}$, $p=1$),
\[
w(Q)\leq\left(\frac{|Q|}{|E_Q|}\right)^p[w]_{A_p}w(E_Q)\leq\eta^{-p}[w]_{A_p}w(E_Q).
\]
Hence, $\Sp$ is $[w]_{A_p}^{-1}\eta^p$-sparse with respect to the measure $w$ with the same collections $(E_Q)_{Q\in\Sp\cap\D^\alpha}$.

Next we observe that for any $1\leq p\leq q<\infty$ it follows from H\"{o}lder's inequality that
\begin{equation}\label{eq:holweight}
\begin{split}
\langle f\rangle_{p,Q}&\leq[w]_{A_{q/p}}^{\frac{1}{q}}\left(\frac{1}{w(Q)}\int_Q\!|f|^q w\,\mathrm{d}\mu\right)^{\frac{1}{q}},\\
\langle g w\rangle_{q',Q}|Q|&\leq[w]_{\RH_{(q/p)'}}^{\frac{1}{p}}\left(\frac{1}{w(Q)}\int_Q\!|g|^{p'} w\,\mathrm{d}\mu\right)^{\frac{1}{p'}}w(Q).
\end{split}
\end{equation}
Thus, if $T\in S(p_0,q_0;\mu)$ and $w\in A_{p_1/p_0}\cap\RH_{(q_0/q_1)'}$ for some $p_0< p_1\leq q_1<q_0$, then it follows from the self-improvement properties (i) of Proposition \ref{prop:weightprop} that we can find $p_0\leq q_-< p_1$, $q_1<q_+\leq q_0$ so that $w\in A_{q_-/p_0}\cap\RH_{(q_0/q_+)'}$. Picking appropriate functions $f$, $g$, and by applying the sparse domination property to the pair $f$, $gw$, we find a sparse collection $\Sp\subseteq\D$ so that by \eqref{eq:holweight} we have
\[
\left|\int(Tf)gw\,\mathrm{d}\mu\right|\lesssim\sum_{Q\in\Sp}\left(\frac{1}{w(Q)}\int_Q\!|f|^{q_-} w\,\mathrm{d}\mu\right)^{\frac{1}{q_-}}\left(\frac{1}{w(Q)}\int_Q\!|g|^{q_+'} w\,\mathrm{d}\mu\right)^{\frac{1}{q_+'}}w(Q).
\]
In other words, we have $T\in S(q_-,q_+;w)$ and thus we obtain the boundedness
\[
T:L^p(w)\to L^p(w),\quad w\in A_{p_1/p_0}\cap\RH_{(q_0/q_1)'},\quad p_1\leq p\leq q_1.
\]

For the case where $p_1=q_1=p_0$ and thus when $w\in A_1\cap\RH_{(q_0/p_0)'}$, an analogous reasoning shows that for some $p_0<q_+$ we have $T\in S(p_0,q_+;w)$. Hence, it follows from Proposition~\ref{prop:sparseim} that $T$ is of weak type $(p_0,p_0)$ with respect to such weights.

Our main results deal with the cases $p_1=p_0$ where we establish quantitative bounds of $T$ in terms of the characteristic constants of the weight in the situations
\[
\begin{split}
T:L^p(w)\to L^p(w),&\quad w\in A_1\cap\RH_{(q_0/p)'},\\
T:L^{p_0}(w)\to L^{p_0,\infty}(w),&\quad w\in A_1\cap\RH_{(q_0/p_0)'}.
\end{split}
\]

\begin{proof}[Proof of Proposition \ref{prop:sparseim}]
By splitting into $3^n$ terms, we may assume without loss of generality that our sparse domination occurs in a single dyadic grid $\D^\alpha$ throughout our arguments.

Let $p_0<p<q_0$ and let $f\in L^p\cap\mathcal{D}$, $g\in L^{p'}\cap\mathcal{D}$. Then we can find a sparse collection $\Sp\subseteq\D^\alpha$ so that
\[
|\langle Tf,g\rangle|\lesssim \sum_{Q\in\Sp}\langle f\rangle_{p_0,Q}\langle g\rangle_{q_0',Q}|Q|\lesssim\sum_{Q\in\Sp}\essinf_Q\big(\M_{p_0}f\M_{q_0'}g\big)|E_Q|\lesssim\int\M_{p_0}f\M_{q_0'}g\,\mathrm{d}\mu.
\]
By using H\"{o}lder's inequality and by noting that $p>p_0$, $p'>q_0'$, it remains to observe that $\|\M_{p_0}f\|_p\lesssim\|f\|_p$ and $\|\M_{q_0'}g\|_{p'}\lesssim\|g\|_{p'}$. Hence, $T$ extends to a bounded operator in $L^p$.

For the second assertion we will use the equivalence
\begin{equation}\label{eq:weakchar}
\|T\|_{L^{p_0}\to L^{p_0,\infty}}\simeq\sup_{\|f\|_{p_0}=1}\sup_{\substack{E\subseteq\R^n\\0<|E|<\infty}}\inf_{\substack{E'\subseteq E\\ |E|\leq2|E'|}}\sup_{|h|\leq\chi_{E'}}|E|^{\frac{1}{p_0}-1}|\langle Tf,h\rangle|,
\end{equation}
with $f,h\in\mathcal{D}$, see \cite[Exercise 1.4.14]{grafakosclassic}. Given such an $f$ with $\|f\|_{p_0}=1$ and $E\subseteq\R^n$ of finite positive measure we define
\[
\Omega:=\Big\{\M^{\mathscr{B}}(|f|^{p_0})>K|E|^{-1}\Big\},\quad E':=E\backslash\Omega,
\]
where $K$ is chosen large enough to ensure that $|E|\leq 2|E'|$. Let $h\in\mathcal{D}$ with $|h|\leq\chi_{E'}$. Then we can find a sparse collection $\Sp\subseteq\D^\alpha$ such that
\begin{equation}\label{eq:inestsp}
|\langle Tf,h\rangle|\lesssim\sum_{Q\in\Sp}\langle f\rangle_{p_0,Q}\langle h\rangle_{q_0',Q}|Q|=\sum_{\substack{Q\in\Sp\\ Q\cap E'\neq\emptyset}}\langle f\rangle_{p_0,Q}\langle h\rangle_{q_0',Q}|Q|.
\end{equation}
We proceed by taking a Calder\'{o}n-Zygmund decomposition of $|f|^{p_0}\in L^1$. We can find a disjoint collection $\mathscr{P}\subseteq\D^\alpha$ of cubes so that $\Omega=\cup_{P\in\mathscr{P}}P$ and functions $g$, $(b_P)_{P\in\mathscr{P}}$ so that $|f|^{p_0}=g+\sum_{P\in\mathscr{P}}b_P$ and
\begin{align}\label{eq:caldweak1}
\supp b_P\subseteq P,\quad\int_P\!b_P\,\mathrm{d}\mu=0,\\\label{eq:caldweak2}
\|g\|_\infty\lesssim|E|^{-1},\quad\|g\|_{1}\lesssim 1
\end{align}
Noting that for all $P\in\mathscr{P}$ we have $P\cap E'=\emptyset$, the properties of the dyadic system imply that for any $Q\in\Sp$ with $Q\cap E'\neq\emptyset$ we have $P\subseteq Q$ whenever $P\cap Q\neq\emptyset$. But then by \eqref{eq:caldweak1} and arguments similar to the ones in the first part of the proof we have
\begin{equation}\label{eq:secestsp}
\sum_{\substack{Q\in\Sp\\ Q\cap E'\neq\emptyset}}\langle f\rangle_{p_0,Q}\langle h\rangle_{q_0',Q}|Q|=\sum_{\substack{Q\in\Sp\\ Q\cap E'\neq\emptyset}}\left(\frac{1}{|Q|}\int_Q\!g\,\mathrm{d}\mu\right)^{\frac{1}{p_0}}\langle h\rangle_{q_0',Q}|Q|\lesssim\||g|^{\frac{1}{p_0}}\|_q\|h\|_{q'}.
\end{equation}
Thus, by combining \eqref{eq:inestsp} and \eqref{eq:secestsp}, we find using \eqref{eq:caldweak2} that
\[
|\langle Tf,h\rangle|\lesssim\left(\|g\|^{1-\frac{p_0}{q}}_\infty\|g\|^{\frac{p_0}{q}}_1\right)^{\frac{1}{p_0}}\|h\|_\infty|E'|^{\frac{1}{q'}}\lesssim|E|^{\frac{1}{q}-\frac{1}{p_0}}|E|^{\frac{1}{q'}}=|E|^{1-\frac{1}{p_0}}.
\]
Hence, we may conclude from \eqref{eq:weakchar} that $\|T\|_{L^{p_0}\to L^{p_0,\infty}}<\infty$, finishing the proof.
\end{proof}
\begin{remark}\label{rem:czob}
The cancellation of the `bad` part $b$ in our proofs occurs because we are able to perform our Calder\'on-Zygmund decomposition in the same dyadic grid as where the sparse domination occurs, see Lemma \ref{lem:caldzyg1}. The usual Whitney decomposition argument that is used for Calder\'on-Zygmund decompositions in general doubling metric measure spaces, as can be found for example in \cite{CW71, stein93}, is not precise enough for this particular argument and we need to adapt the results so that they work with our dyadic grids.
\end{remark}

\section{Proofs of the main results}\label{sec:proofs}
Throughout these proofs we fix $\alpha\in\big\{0,\frac{1}{3},\frac{2}{3}\big\}^n$ and only consider cubes taken from the grid $\D^\alpha$. We also only consider the dyadic maximal operators $\M_p$ to be taken with respect to this grid to facilitate some of the arguments and for simpler constants in our estimates. Recall that $\mathcal{D}$ denotes a space of functions in $\R^n$ which has the property that it is dense in $L^p(w)$ for all $1\leq p<\infty$ and all weights $w\in A_\infty$.

As an analogue to \cite[Lemma 3.2]{lerner08} and \cite[Lemma 6.1]{hytonen13}, our main lemma is the following:
\begin{lemma}\label{lem:main}
Let $\Sp\subseteq\D^\alpha$ be $\eta$-sparse, and let $1\leq p_0<p<q_0\leq\infty$, $1<q<\infty$. Then there is a constant $c=c(\eta,n,\nu)>0$ so that
\[
\sum_{Q\in\Sp}\langle f\rangle_{p_0,Q}\langle g\rangle_{q_0',Q}|Q|\leq c c_p(q')^{\frac{1}{p'}}\|f\|_{L^p(\M_{q(q_0/p)'}w)}\|g\|_{L^{p'}(w^{1-p'})}
\]
for all $f,g\in\mathcal{D}$ and $w\in L^{q(q_0/p)'}_{loc}$, where $c_p$ is as in Theorem \ref{thm:main1}.
\end{lemma}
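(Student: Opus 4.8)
The plan is to adapt the arguments of \cite{lerner08,hytonen13} to the sparse‑domination setting. We work throughout in the single dyadic grid $\D^\alpha$, which is allowed since the sum over $\Sp$ was already reduced to one grid. Write
\[
\sigma := w^{1-p'},\qquad W_0 := w^{(q_0/p)'},\qquad W_f := \M_{q(q_0/p)'}w = (\M_q W_0)^{1/(q_0/p)'},
\]
and recall that, if $W_f\equiv\infty$ on a set of positive measure then the asserted inequality is trivial, so we may assume $W_f$ is finite a.e.; being a $(q(q_0/p)')^{-1}$‑th power ($q(q_0/p)'>1$) of a Hardy--Littlewood maximal function, $W_f$ is then an $A_1$‑weight with $[W_f]_{A_1}\lesssim_n (q(q_0/p)')'\le q'$ by the Coifman--Rochberg lemma.

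\emph{Step 1: localizing the weights on each cube.} On a fixed $Q\in\Sp$ I would use H\"older's inequality with the exponent $p'/q_0'>1$ (valid since $p<q_0$) and its conjugate, after checking the elementary identity $\sigma^{-q_0'/(p'-q_0')}=W_0$, to obtain
\[
\langle g\rangle_{q_0',Q}\le\langle |g|^{p'}\sigma\rangle_{1,Q}^{1/p'}\,\langle W_0\rangle_{1,Q}^{\frac1{q_0'}-\frac1{p'}}.
\]
Since $\M_q W_0(x)\ge\langle W_0\rangle_{q,Q}\ge\langle W_0\rangle_{1,Q}$ for every $x\in Q$ (the elementary pointwise lower bound for the dyadic maximal operator, together with Jensen) and since one computes the arithmetic identity $(q_0/p)'\big(\tfrac1{q_0'}-\tfrac1{p'}\big)=\tfrac1p$, this becomes $\langle g\rangle_{q_0',Q}\le\langle|g|^{p'}\sigma\rangle_{1,Q}^{1/p'}\big(W_f(Q)/|Q|\big)^{1/p}$. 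In the same spirit, H\"older with the exponent $p/p_0>1$, combined with the reverse‑Jensen inequality available because $W_f\in A_1$, bounds $\langle f\rangle_{p_0,Q}$ by a constant times $\big(\tfrac1{W_f(Q)}\int_Q|f|^pW_f\,\mathrm{d}\mu\big)^{1/p}$. Substituting these estimates, and passing from averages over $Q$ to averages over the $E_Q$ where necessary by means of the sharp $L^p$‑ and $L^{p'}$‑bounds for $\M_{p_0}$ and $\M_{q_0'}$, I would reduce the problem to estimating the two‑weight sparse form
\[
\sum_{Q\in\Sp}\Big(\int_Q|f|^pW_f\,\mathrm{d}\mu\Big)^{1/p}\Big(\int_Q|g|^{p'}\sigma\,\mathrm{d}\mu\Big)^{1/p'},
\]
the constants accumulated so far being exactly those that, combined with what Step 2 produces, assemble into $c_p$.

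\emph{Step 2: estimating the sparse form.} This is the heart of the matter. A direct H\"older's inequality applied to the sum over $\Sp$ is far too lossy: the two underlying measures $W_f\,\mathrm{d}\mu$ and $\sigma\,\mathrm{d}\mu$ are unrelated, and only the $\mu$‑sparseness of $\Sp$ is given. I would instead run a principal‑cube (stopping‑time) argument — as in \cite{lerner08,hytonen13} — forming stopping families relative to the $\mu$‑averages of $|g|^{p'}\sigma$ and, using the crucial fact that $W_f$ is an $A_1$‑weight (so that $\Sp$ is simultaneously sparse with respect to the measure $W_f\,\mathrm{d}\mu$), relative to the $W_f$‑weighted averages of $|f|^p$; then group the cubes of $\Sp$ by their two stopping parents, exploit the disjointness of the sets $E_Q$ within each group, and conclude with a Carleson embedding theorem on the stopping families. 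Here the Carleson embeddings into $L^p$ and $L^{p'}$ account for the $pp'$‑type growth contained in $c_p$, while the transfer of the sparseness to $W_f\,\mathrm{d}\mu$ (whose cost is governed by $[W_f]_{A_1}\lesssim_n q'$) carries the power of $q'$.

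\emph{Main obstacle.} The delicate point is precisely the bookkeeping in Step 2: the stopping construction must be arranged so that it loses the factor $(q')^{1/p'}$ and nothing more — in particular so that the exponent on $q'$ comes out as $1/p'$ rather than anything larger. This sharpness is exactly what makes the lemma strong enough to recover \eqref{eq:inftyonemix} in the endpoint case $p_0=1$, $q_0=\infty$, and it is the part of the argument where the new ideas (compared with the purely unweighted, strong‑type reasoning) are concentrated.
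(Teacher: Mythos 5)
There is a genuine gap, and it sits already in your Step 1. Hölder-ing each local average all the way up to the target exponents, i.e.\ passing from $\langle f\rangle_{p_0,Q}$ and $\langle g\rangle_{q_0',Q}$ to $\big(W_f(Q)^{-1}\int_Q|f|^pW_f\,\mathrm{d}\mu\big)^{1/p}$ and $\langle|g|^{p'}\sigma\rangle_{1,Q}^{1/p'}$, reduces the lemma to the claim
$\sum_{Q\in\Sp}\big(\int_Q|f|^pW_f\,\mathrm{d}\mu\big)^{1/p}\big(\int_Q|g|^{p'}\sigma\,\mathrm{d}\mu\big)^{1/p'}\lesssim\|f\|_{L^p(W_f)}\|g\|_{L^{p'}(\sigma)}$,
and this target is simply false with any constant independent of $f,g,w$. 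Take $\mu$ Lebesgue, $w\equiv1$ (so $W_f\equiv1$, $\sigma\equiv1$), the sparse family $\Sp=\{[0,2^{-k})^n:0\leq k\leq N\}$ (it is $(1-2^{-n})$-sparse), and $f=g=\chi_{[0,2^{-N})^n}$ (or smooth approximations): every term of the reduced sum equals $2^{-Nn}$, so the sum is $(N+1)2^{-Nn}$, while $\|f\|_p\|g\|_{p'}=2^{-Nn}$. The original sparse form is fine for these data precisely because $1/p_0+1/q_0'>1/p+1/p'=1$, which makes the sum over the chain geometric; your Hölder step destroys exactly this room, and no stopping-time or Carleson argument in Step 2 can recover it. Step 2 is also structurally blocked on the $g$-side: $\sigma=w^{1-p'}$ is an arbitrary weight (only $w\in L^{q(q_0/p)'}_{loc}$ is assumed), so $\Sp$ satisfies no Carleson/sparseness condition with respect to $\sigma\,\mathrm{d}\mu$ and the embedding you invoke is unavailable. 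Finally, even the bookkeeping does not close: the reverse-Jensen step on the $f$-side already costs $[W_f]_{A_1}^{1/p}\simeq(q')^{1/p}$, overshooting the claimed $(q')^{1/p'}$ when $p<2$, and you yourself defer the production of the exponent $1/p'$ to the ``main obstacle''---but that exponent is the whole content of the lemma.

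For comparison, the paper never passes to endpoint averages. It keeps the exponents $p_0$ and $q_0'$ and runs the Lerner--Ombrosi--P\'erez scheme: Lemma \ref{lem:intermedmax} inserts $(\M_{q_0'}g)^{1-\beta}$ into the $p_0$-average pointwise; Lemma \ref{lem:sparsecoifman} then applies the Fefferman--Stein bound \eqref{eq:fefstein} (this is where the strict inequalities $p>p_0$, $p'>q_0'$ are consumed) together with the Coifman--Rochberg estimate \eqref{eq:maxaone}; and the factor $(q')^{1/p'}$ enters only at the end, through the dual two-weight maximal inequality \eqref{eq:maxwduo} applied to $\M(|g|^{q_0'})$ with weight $(\M_q w^{(q_0/p)'})^{1-p'/q_0'}$---not through the $A_1$ constant of $\M_{q(q_0/p)'}w$. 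If you want to salvage your outline, postpone the Hölder step: the local averages must stay at the exponents $p_0$ and $q_0'$ until the maximal-operator machinery has been brought in.
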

We point out that a similar type of result is established in \cite[Theorem B]{diplinio17}.
\begin{remark}\label{rem:unweightedopnorm}
In the unweighted case we note that
\[
\sum_{Q\in\Sp}\langle f\rangle_{p_0,Q}\langle g\rangle_{q_0',Q}|Q|\leq\eta^{-1}\|\M_{p_0}f\|_p\|\M_{q_0'}g\|_{p'}\leq\eta^{-1}\left[\bigg(\frac{p}{p_0}\bigg)'\right]^{\frac{1}{p_0}}\left[\bigg(\frac{p'}{q_0'}\bigg)'\right]^{\frac{1}{q_0'}}\|f\|_p\|g\|_{p'}.
\]
Thus, it appears that adding the weight accounts for the extra term $[(p_0'/p')']^{1/p_0}$ in the constant $c_p$, which depends on $p$ if and only if $p_0>1$. As a matter of fact, we shall see in the proof of Lemma \ref{lem:sparsecoifman} that this constant appears in an application of Kolmogorov's Lemma to the maximal operator. This extra term is what causes the additional terms in the quantitative bounds for $p_0>1$ in Theorem \ref{thm:main2} and at this moment we are unsure whether it can be removed or not.
\end{remark}
We break up the proof of the main lemma into a sequence of lemmata.
\begin{lemma}\label{lem:intermedmax}
For all $f,g\in\mathcal{D}$ and $0<\beta\leq 1$ we have
\[
\sum_{Q\in\Sp}\langle f\rangle_{p_0,Q}\langle g\rangle_{q_0',Q}|Q|\lesssim\int\M_{p_0}\left((\M_{q_0'} g)^{1-\beta}f\right)(\M_{q_0'} g)^\beta\,\mathrm{d}\mu.
\]
\end{lemma}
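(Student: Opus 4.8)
The plan is to estimate each term $\langle f\rangle_{p_0,Q}\langle g\rangle_{q_0',Q}|Q|$ pointwise on the disjoint set $E_Q\subseteq Q$ associated to the $\eta$-sparse family, and then sum. Fix $Q\in\Sp$. The key observation is that on the cube $Q$ we have the trivial bound $\langle g\rangle_{q_0',Q}\le\essinf_Q\M_{q_0'}g$, and more usefully, for $0<\beta\le 1$ we can split the factor $\langle g\rangle_{q_0',Q}$ as $\langle g\rangle_{q_0',Q}^{1-\beta}\langle g\rangle_{q_0',Q}^{\beta}$. The first piece $\langle g\rangle_{q_0',Q}^{1-\beta}$ should be absorbed \emph{inside} an averaging of $f$ at exponent $p_0$, while the second piece $\langle g\rangle_{q_0',Q}^{\beta}$ is bounded by $\essinf_Q(\M_{q_0'}g)^{\beta}$ and kept outside. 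Concretely, since $\langle g\rangle_{q_0',Q}$ is a constant on $Q$, one has
\[
\langle f\rangle_{p_0,Q}\langle g\rangle_{q_0',Q}^{1-\beta}=\langle (\M_{q_0'}g)^{1-\beta}f\rangle_{p_0,Q}\cdot\frac{\langle g\rangle_{q_0',Q}^{1-\beta}}{\langle (\M_{q_0'}g)^{1-\beta}\chi_Q f\rangle_{p_0,Q}/\langle f\rangle_{p_0,Q}},
\]
which is awkward; the clean way is simply to use $\langle g\rangle_{q_0',Q}\le\essinf_Q\M_{q_0'}g$ directly so that $\langle g\rangle_{q_0',Q}^{1-\beta}\le\big((\M_{q_0'}g)^{1-\beta}\big)(x)$ for a.e.\ $x\in Q$, hence
\[
\langle f\rangle_{p_0,Q}\,\langle g\rangle_{q_0',Q}^{1-\beta}
=\Big(\frac{1}{|Q|}\int_Q|f|^{p_0}\,\mathrm d\mu\Big)^{1/p_0}\langle g\rangle_{q_0',Q}^{1-\beta}
\le\Big(\frac{1}{|Q|}\int_Q\big((\M_{q_0'}g)^{1-\beta}|f|\big)^{p_0}\,\mathrm d\mu\Big)^{1/p_0}
=\big\langle (\M_{q_0'}g)^{1-\beta}f\big\rangle_{p_0,Q}.
\]

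Combining this with $\langle g\rangle_{q_0',Q}^{\beta}\le\essinf_Q(\M_{q_0'}g)^\beta$, we get for each $Q\in\Sp$
\[
\langle f\rangle_{p_0,Q}\langle g\rangle_{q_0',Q}|Q|
\le\big\langle (\M_{q_0'}g)^{1-\beta}f\big\rangle_{p_0,Q}\,\essinf_Q(\M_{q_0'}g)^\beta\,|Q|.
\]
Now use $\eta$-sparseness: $|Q|\le\eta^{-1}|E_Q|$ with the $E_Q\subseteq Q$ pairwise disjoint. On $E_Q$ we have $\big\langle (\M_{q_0'}g)^{1-\beta}f\big\rangle_{p_0,Q}\le\M_{p_0}\big((\M_{q_0'}g)^{1-\beta}f\big)(x)$ and $\essinf_Q(\M_{q_0'}g)^\beta\le(\M_{q_0'}g)^\beta(x)$ for a.e.\ $x\in E_Q$, since $E_Q\subseteq Q$ and $\M_{p_0}$ is the \emph{dyadic} maximal operator over $\D^\alpha$ with $Q\in\D^\alpha$. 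Hence
\[
\langle f\rangle_{p_0,Q}\langle g\rangle_{q_0',Q}|Q|
\le\eta^{-1}\int_{E_Q}\M_{p_0}\big((\M_{q_0'}g)^{1-\beta}f\big)(\M_{q_0'}g)^\beta\,\mathrm d\mu.
\]
Summing over $Q\in\Sp$ and using disjointness of $(E_Q)_{Q}$ gives
\[
\sum_{Q\in\Sp}\langle f\rangle_{p_0,Q}\langle g\rangle_{q_0',Q}|Q|
\le\eta^{-1}\int_{\R^n}\M_{p_0}\big((\M_{q_0'}g)^{1-\beta}f\big)(\M_{q_0'}g)^\beta\,\mathrm d\mu,
\]
which is the claimed estimate with implied constant $\eta^{-1}$.

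I do not anticipate a serious obstacle here: the only point requiring a little care is the interchange of $\langle g\rangle_{q_0',Q}^{1-\beta}$ with the $p_0$-average of $f$, which works precisely because raising to the power $1-\beta\in[0,1)$ and pulling a constant inside an $L^{p_0}$-average is an equality, not an inequality, combined with the pointwise domination $\langle g\rangle_{q_0',Q}\le\M_{q_0'}g$ on $Q$. One should also note $q_0'<\infty$ is not needed (for $q_0=\infty$ interpret $q_0'=1$ and $\M_{q_0'}=\M$), so the statement holds in the full range $1\le p_0<q_0\le\infty$; the case $\beta=1$ just recovers the elementary bound $\sum_Q\langle f\rangle_{p_0,Q}\langle g\rangle_{q_0',Q}|Q|\lesssim\int\M_{p_0}f\,\M_{q_0'}g\,\mathrm d\mu$ already used in the proof of Proposition \ref{prop:sparseim}.
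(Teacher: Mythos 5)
Your argument is correct and is essentially the paper's own proof: split $\langle g\rangle_{q_0',Q}=\langle g\rangle_{q_0',Q}^{1-\beta}\langle g\rangle_{q_0',Q}^{\beta}$, absorb the $(1-\beta)$-power into the $p_0$-average of $f$ via the pointwise bound $\langle g\rangle_{q_0',Q}\leq\M_{q_0'}g$ on $Q$, dominate the resulting factors by $\essinf_Q$ of the two maximal functions, and then use $|Q|\leq\eta^{-1}|E_Q|$ together with the disjointness of the sets $E_Q$. The intermediate ``awkward'' identity can simply be deleted; the clean absorption step you give afterwards is exactly the one used in the paper.
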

\begin{proof}
Note that for any $Q\in\D^\alpha$ we have
\[
\langle g\rangle_{q_0',Q}=\langle g\rangle_{q_0',Q}^\beta\langle g\rangle_{q_0',Q}^{1-\beta}\leq\langle g\rangle_{q_0',Q}^\beta\essinf_{Q}(\M_{q_0'} g)^{1-\beta}
\]
so that
\[
\langle f\rangle_{p_0,Q}\langle g\rangle_{q_0',Q}\leq\langle(\M_{q_0'} g )^{1-\beta}f\rangle_{p_0,Q}\langle g\rangle_{q_0',Q}^\beta\leq\essinf_Q \M_{p_0}\left((\M_{q_0'} g )^{1-\beta}f\right)(\M_{q_0'}g)^{\beta}.
\]
Hence, if $\Sp\subseteq\D^\alpha$ is sparse, we find that
\begin{align*}
\sum_{Q\in\Sp}\langle f\rangle_{p_0,Q}\langle g\rangle_{q_0',Q}|Q|&\lesssim\sum_{Q\in\Sp}\essinf_Q \M_{p_0}\left((\M_{q_0'} g )^{1-\beta}f\right)(\M_{q_0'}g)^{\beta}|E_Q|\\
&\leq\int\M_{p_0}\left((\M_{q_0'} g)^{1-\beta}f\right)(\M_{q_0'} g)^\beta\,\mathrm{d}\mu,
\end{align*}
as desired.
\end{proof}
\begin{lemma}\label{lem:sparsecoifman}
For all $1\leq q<\infty$, $w\in L^q_{loc}$, $p_0<p<q_0$, and $f,g\in\mathcal{D}$, we have
\[
\sum_{Q\in\Sp}\langle f\rangle_{p_0,Q}\langle g\rangle_{q_0',Q}|Q|\lesssim\tau_p\|f\|_{L^p(\M_q w)}\|\M_{q_0'} g\|_{L^{p'}((\M_qw)^{1-p'})},
\]
where
\[
\tau_p=\left[\left(\frac{p_0'}{p'}\right)'\left(\frac{p}{p_0}\right)'\right]^{\frac{1}{p_0}}.
\]
\end{lemma}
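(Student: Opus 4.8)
The plan is to read the estimate off from Lemma~\ref{lem:intermedmax}: one first disentangles the two maximal functions it produces with the help of the Coifman--Rochberg theorem, then inserts the weight $\M_q w$ by Hölder's inequality, and finally quantifies the passage from $p_0$-averages to $p$-averages using the Fefferman--Stein inequality together with Kolmogorov's lemma. Two auxiliary parameters are optimised at the very end to collapse the constants onto $\tau_p$.

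\emph{Step 1.} Fix $\beta\in(0,1)$ and $r>1$ with $(1-\beta)p_0r<q_0'$ and $p_0r'<p$; their precise values are chosen at the end. Writing $G:=\M_{q_0'}g$, Lemma~\ref{lem:intermedmax} gives
\[
\sum_{Q\in\Sp}\langle f\rangle_{p_0,Q}\langle g\rangle_{q_0',Q}|Q|\lesssim\int\M_{p_0}\bigl(G^{1-\beta}f\bigr)\,G^{\beta}\,\mathrm{d}\mu.
\]
On each cube $Q$, Hölder's inequality with exponents $r,r'$ separates $G$ from $f$ inside the average defining $\M_{p_0}$,
\[
\Bigl(\tfrac1{|Q|}\int_Q G^{(1-\beta)p_0}|f|^{p_0}\,\mathrm{d}\mu\Bigr)^{1/p_0}\le\Bigl(\tfrac1{|Q|}\int_Q G^{(1-\beta)p_0r}\,\mathrm{d}\mu\Bigr)^{\frac1{p_0r}}\langle f\rangle_{p_0r',Q},
\]
and since $G^{(1-\beta)p_0r}=\bigl(\M(|g|^{q_0'})\bigr)^{(1-\beta)p_0r/q_0'}$ is a power strictly below $1$ of a Hardy--Littlewood maximal function, the Coifman--Rochberg theorem puts it in $A_1$ with constant $\lesssim\bigl(1-(1-\beta)p_0r/q_0'\bigr)^{-1}$; hence its average over $Q$ is at most a constant multiple of its essential infimum over $Q$, and a fortiori at most a constant multiple of $G(x)^{(1-\beta)p_0r}$ for every $x\in Q$. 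Taking the supremum over $Q\ni x$ and using $G^{1-\beta}G^{\beta}=G$ we obtain
\[
\sum_{Q\in\Sp}\langle f\rangle_{p_0,Q}\langle g\rangle_{q_0',Q}|Q|\lesssim\bigl(1-(1-\beta)p_0r/q_0'\bigr)^{-\frac1{p_0r}}\int\M_{p_0r'}f\cdot\M_{q_0'}g\,\mathrm{d}\mu.
\]

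\emph{Step 2.} By Hölder's inequality with exponents $p,p'$ and weight $\M_q w$,
\[
\int\M_{p_0r'}f\cdot\M_{q_0'}g\,\mathrm{d}\mu\le\|\M_{p_0r'}f\|_{L^p(\M_q w)}\,\bigl\|\M_{q_0'}g\bigr\|_{L^{p'}((\M_q w)^{1-p'})},
\]
and the second factor is exactly the one appearing in the statement. For the first factor, the Fefferman--Stein inequality for $\M$ on $L^{p/(p_0r')}$ (legitimate since $p_0r'<p$) bounds it by a constant times $\bigl[(p/(p_0r'))'\bigr]^{1/(p_0r')}\|f\|_{L^p(\M_{p_0r'}(\M_q w))}$; one then absorbs $\M_{p_0r'}(\M_q w)$ back into $\M_q w$, using the pointwise lower bound $\M_q w\ge\langle w\rangle_{q,Q}$ valid on every cube $Q$ and decomposing $\M_q w$ on a cube into its local part and the supremum of the averages over its ancestors, an application of Kolmogorov's lemma with the exponent $p'/p_0'<1$ producing the factor $(p_0'/p')'$. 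The outcome is
\[
\|\M_{p_0r'}f\|_{L^p(\M_q w)}\lesssim\bigl[(p_0'/p')'\bigr]^{1/p_0}\Bigl[\bigl(\tfrac p{p_0r'}\bigr)'\Bigr]^{1/(p_0r')}\|f\|_{L^p(\M_q w)}.
\]
Finally I let $r\to\infty$, so that $p_0r'\downarrow p_0$, the Coifman--Rochberg factor tends to $1$, and $\bigl[(p/(p_0r'))'\bigr]^{1/(p_0r')}\to\bigl[(p/p_0)'\bigr]^{1/p_0}$, while letting $\beta\uparrow1$ quickly enough that $(1-\beta)p_0r$ stays bounded away from $q_0'$ (for instance $\beta=1-q_0'/(2p_0r)$). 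The product of all the constants then collapses to a dimensional multiple of $\bigl[(p_0'/p')'(p/p_0)'\bigr]^{1/p_0}=\tau_p$, which is the claim.

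The real work is precisely this coupled bookkeeping of constants: each of the four ingredients --- Hölder inside the cube, Coifman--Rochberg, Fefferman--Stein, and Kolmogorov --- contributes a factor that degenerates either as an auxiliary exponent collapses or as $p\downarrow p_0$, and one must choose $\beta$ and $r$ as functions of $p_0,p,q_0$ so that these degeneracies recombine into exactly $\tau_p$ rather than a larger power; as foretold in Remark~\ref{rem:unweightedopnorm}, the decisive point is that Kolmogorov's lemma must be applied with the exponent $p'/p_0'$. A subsidiary subtlety is that $\M_q w$ is not a Muckenhoupt weight when $q=1$, so that the weighted boundedness of $\M_{p_0r'}$ used in Step~2 cannot simply be quoted from the $A_p$ theory but has to be extracted from the pointwise lower bound $\M_q w\ge\langle w\rangle_{q,Q}$ (or, when $w$ has enough local integrability, deduced from the case $q>1$ by letting $q\downarrow1$, the constant $\tau_p$ being independent of $q$).
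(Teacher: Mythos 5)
Your Step 1 is essentially sound (though note that it only yields $\int\M_{p_0r'}f\cdot\M_{q_0'}g\,\mathrm{d}\mu$, which is weaker than the bound $\int\M_{p_0}f\cdot\M_{q_0'}g\,\mathrm{d}\mu$ that sparseness gives directly, so the $\beta$-splitting is spent without gain; in the paper's proof $\beta$ is precisely the device used later to distribute the weight between the two factors). The genuine gap is Step 2. After your Hölder step with the weight $\M_q w$ you need the one-weight inequality $\|\M_{p_0r'}f\|_{L^p(\M_q w)}\lesssim\tau_p\|f\|_{L^p(\M_q w)}$ with a constant independent of $q$. Fefferman--Stein \eqref{eq:fefstein} only gives $\|f\|_{L^p(\M(\M_q w))}$ on the right, and the absorption $\M(\M_q w)\lesssim\M_q w$ you then invoke is false at $q=1$ and costs a factor $\simeq q'$ for $q>1$ (Coifman--Rochberg gives $[\M_q w]_{A_1}\lesssim q'$, nothing better). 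Concretely, for $q=1$ and $w=\chi_{[0,1]}$ on the real line one has $\M w(x)\simeq\min(1,|x|^{-1})$, whose $A_s$ characteristic over intervals $[0,R]$ grows like $\log R$, so $\M$ (hence $\M_{p_0r'}$) is simply not bounded on $L^p(\M w)$; for $q>1$ your route produces a constant blowing up as $q\downarrow1$, which contradicts the $q$-independence of $\tau_p$ and ruins the intended application, since the lemma is later used with $q-1\simeq1/(\kappa[v]_{A_\infty})$, so an extra $q'$ is an extra full power of $[v]_{A_\infty}$. Your two suggested repairs do not close this: the pointwise bound $\M_q w\geq\langle w\rangle_{q,Q}$ gives no control of $\M(\M_q w)$; the appeal to ``Kolmogorov with exponent $p'/p_0'$'' has no function attached to it, as $\M_q w=(\M(w^q))^{1/q}$ is the power $1/q$, not $p'/p_0'$, of a maximal function; and the limiting argument $q\downarrow1$ is circular, because the constants you would pass to the limit are exactly the ones that blow up (besides, $w\in L^q_{loc}$ need not lie in $L^{q_2}_{loc}$ for $q_2>q$).

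The paper avoids this obstruction by never placing the weight $\M_q w$ alone on the $f$-side: Hölder is applied with the exponent $\beta$ chosen so that the $f$-factor carries the weight $(\M w)^{(1-r)/(1-p')}$ (or $(\M w)^{(1-r)\beta/(\beta-p')}$), a power of $\M w$ strictly below $1$ by a margin quantified through $r<p_0'$; Fefferman--Stein then leaves $\M$ acting on that small power, and Kolmogorov \eqref{eq:maxaone} absorbs it at a cost $\lesssim(p_0'/r)'\leq(p_0'/p')'$, so the degeneration is paid in $p-p_0$ (which is exactly what $\tau_p$ records) rather than in $q$. Without some such asymmetric splitting of the weight your argument cannot produce a $q$-uniform constant, so Step 2 as written is a genuine gap rather than a bookkeeping issue.
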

For the proof of this lemma we require two results on dyadic maximal operators. By the classical result of Fefferman and Stein \cite{fefstein71} we have
\begin{equation}\label{eq:classfefstein}
\|\M f\|_{L^{1,\infty}(w)}\leq\|f\|_{L^1(\M w)}
\end{equation}
and thus $\|\M f\|_{L^p(w)}\leq p'\|f\|_{L^p(\M w)}$ for $1<p<\infty$ by the Marcinkiewicz Interpolation Theorem. This implies that
\begin{equation}\label{eq:fefstein}
\|\M_q f\|_{L^p(w)}\leq\left[\left(\frac{p}{q}\right)'\right]^{\frac{1}{q}}\|f\|_{L^p(\M w)},\quad 1\leq q<p<\infty.
\end{equation}
Moreover, as a consequence of Kolmogorov's Lemma we have
\begin{equation}\label{eq:maxaone}
\M((\M f)^\delta)\lesssim\frac{(\M f)^\delta}{1-\delta},\quad 0<\delta<1
\end{equation}
for $f$ such that $\M f<\infty$. For this result we refer the reader to \cite[Proposition 2]{coifman80} or \cite[Theorem 9.2.7]{grafakosmodern}.
\begin{proof}
We will prove the stronger assertion
\[
\sum_{Q\in\Sp}\langle f\rangle_{p_0,Q}\langle g\rangle_{q_0',Q}|Q|\lesssim\left[\bigg(\frac{p_0'}{r}\bigg)'\bigg(\frac{\max(r,p')'}{p_0}\bigg)'\right]^{\frac{1}{p_0}}\|f\|_{L^p((\M w)^{(1-r)/(1-p')})}\|\M_{q_0'} g\|_{L^{p'}((\M w)^{1-r})},
\]
valid for all $1<r<p_0'$, generalizing a version of the result \cite[Theorem 1.7]{lerner10} and its proof in which the case $p_0=1$, $q_0=\infty$ is treated. The result of the lemma follows by taking $r=(p'-1)/q+1\in(1,p']$.

We set
\[
\beta:=\min\left(p'\frac{r-1+p_0'-1}{p_0'(r-1)+(p_0'-1)r},1\right)
\]
so that $0<\beta\leq 1$.

By Lemma \ref{lem:intermedmax} and by H\"{o}lder's Inequality we find that
\begin{equation}\label{eq:maxcoif}
\begin{split}
\sum_{Q\in\Sp}\langle f\rangle_{p_0,Q}\langle g\rangle_{q_0',Q}|Q|&\lesssim\int\M_{p_0}\left((\M_{q_0'} g)^{1-\beta}f\right)(\M_{q_0'} g)^\beta\,\mathrm{d}\mu\\
&\leq I\|\M_{q_0'} g\|^\beta_{L^{p'}((\M w)^{1-r})},
\end{split}
\end{equation}
where
\[
I=\left\|\M_{p_0}\left((\M_{q_0'} g )^{1-\beta}f\right)\right\|_{L^{\frac{p'}{p'-\beta}}((\M w)^{(1-r)\beta/(\beta-p')})}.
\]

We will consider two cases. First assume that
\[
p'\frac{r-1+p_0'-1}{p_0'(r-1)+(p'_0-1)r}\geq 1
\]
and $\beta=1$. Then
\[
(p'-1)(r-1+p_0'-1)\geq 2(p_0'-1)(r-1)
\]
so that
\[
\frac{1-r}{1-p'}\leq\frac{1}{2}\left(1+\frac{r-1}{p_0'-1}\right)<1
\]
by the assumption $r<p_0'$. Then it follows from \eqref{eq:fefstein} and \eqref{eq:maxaone} that
\begin{align*}
I&=\left\|\M_{p_0}f\right\|_{L^{p}((\M w)^{(1-r)/(1-p')})}\leq\left[\bigg(\frac{p}{p_0}\bigg)'\right]^{\frac{1}{p_0}}\|f\|_{L^{p}(\M((\M w)^{(1-r)/(1-p')})))}\\
&\lesssim\left(\frac{1}{1-\frac{1-r}{1-p'}}\right)^{\frac{1}{p}}\left[\bigg(\frac{p}{p_0}\bigg)'\right]^{\frac{1}{p_0}}\|f\|_{L^{p}((\M w)^{(1-r)/(1-p')})}\\
&\leq\left[2\bigg(\frac{p_0'}{r}\bigg)'\right]^{\frac{1}{p}}\left[\bigg(\frac{p}{p_0}\bigg)'\right]^{\frac{1}{p_0}}\|f\|_{L^{p}((\M w)^{(1-r)/(1-p')})},
\end{align*}
as desired.

For the second case we assume that
\[
p'\frac{r-1+p_0'-1}{p_0'(r-1)+(p_0'-1)r}<1\quad\text{and}\quad\beta=p'\frac{r-1+p_0'-1}{p_0'(r-1)+(p_0'-1)r}.
\]
Then, using $r<p_0'$, we note that
\[
\frac{p'}{p'-\beta}=\frac{p_0+r'}{2}>p_0\quad\text{and}\quad\frac{(1-r)\beta}{\beta-p'}=\frac{1}{2}\left(1+\frac{r-1}{p_0'-1}\right)<1.
\]
Hence, we may apply \eqref{eq:fefstein} and \eqref{eq:maxaone} so that
\begin{equation}\label{eq:maxcoif2}
\begin{split}
I&\lesssim\left(\frac{1}{1-\frac{(1-r)\beta}{\beta-p'}}\right)^{\frac{p'-\beta}{p'}}\left(\frac{\frac{p'}{p'-\beta}}{\frac{p'}{p'-\beta}-p_0}\right)^{\frac{1}{p_0}}\|(\M_{q_0'} g)^{1-\beta}f\|_{L^{\frac{p'}{p'-\beta}}((\M w)^{(1-r)\beta/(\beta-p')})}\\
&\leq\left[2\bigg(\frac{p_0'}{r}\bigg)'\right]^{\frac{p'-\beta}{p'}}\left[1+2\frac{p_0}{r'}\bigg(\frac{r'}{p_0}\bigg)'\right]^{\frac{1}{p_0}}\|(\M_{q_0'} g)^{1-\beta}f\|_{L^{\frac{p'}{p'-\beta}}((\M w)^{(1-r)\beta/(\beta-p')})}.
\end{split}
\end{equation}

By H\"{o}lder's Inequality we find that
\begin{align*}
\|(\M_{q_0'} g )^{1-\beta}f\|^{\frac{p'}{p'-\beta}}_{L^{\frac{p'}{p'-\beta}}((\M w)^{(1-r)\beta/(\beta-p')})}&=
\int(\M_{q_0'} g)^{\frac{(1-\beta)p'}{p'-\beta}}(\M w)^{\frac{(1-r)(\beta-1)}{\beta-p'}} |f|^{\frac{p'}{p'-\beta}}(\M w)^{\frac{1-r}{\beta-p'}}\,\mathrm{d}\mu\\
&\leq\|\M_{q_0'}g\|_{L^{p'}((\M w)^{1-r})}^{\frac{p'(1-\beta)}{p'-\beta}}\|f\|^{\frac{p'}{p'-\beta}}_{L^p((\M w)^{(1-r)/(1-p')})}.
\end{align*}
Hence, by \eqref{eq:maxcoif2} we have
\[
I\lesssim\left[\bigg(\frac{p_0'}{r}\bigg)'\bigg(\frac{r'}{p_0}\bigg)'\right]^{\frac{1}{p_0}}\|f\|_{L^p((\M w)^{(1-r)/(1-p')})}\|\M_{q_0'} g\|_{L^{p'}((\M w)^{1-r})}^{1-\beta}.
\]
Thus, the result follows from \eqref{eq:maxcoif}.

By combining the two cases, the assertion follows.
\end{proof}
For the proof of Lemma \ref{lem:main} we will use a result that can be found in \cite[p.~8]{lerner08} which states that
\begin{equation}\label{eq:maxwduo}
\|\M f\|_{L^{p'}((\M_q w)^{1-p'})}\leq \left(\frac{pq-1}{q-1}\right)^{1-\frac{1}{pq}}\|f\|_{L^{p'}(w^{1-p'})}
\end{equation}
for $1<p,q<\infty$.
\begin{proof}[Proof of Lemma \ref{lem:main}]
Setting $v:=w^{(q_0/p)'}$, it follows from \eqref{eq:maxwduo} that
\begin{equation}\label{eq:appcoif2}
\begin{split}
\|\M_{q_0'}g\|_{L^{p'}\left(\left(\M_{q(q_0/p)'}w\right)^{1-p'}\right)}&=\|\M(|g|^{q_0'})\|^{\frac{1}{q_0'}}_{L^{p'/q_0'}\big(\left(\M_q v\right)^{1-p'/q_0'}\big)}\\
&\leq\left(\frac{\left(\frac{p'}{q_0'}\right)'q-1}{q-1}\right)^{\frac{1}{q_0'}-\frac{1}{q_0'\left(\frac{p'}{q_0'}\right)'q}}\||g|^{q_0'}\|^{\frac{1}{q_0'}}_{L^{\frac{p'}{q_0'}}\big(v^{1-p'/q_0'}\big)}\\
&\leq\left[\bigg(\frac{p'}{q_0'}\bigg)'\right]^{\frac{1}{q_0'}}(q')^{\frac{1}{p'}+\frac{1}{q'}}\|g\|_{L^{p'}(w^{1-p'})},
\end{split}
\end{equation}
where we used that
\[
\frac{1}{q_0'}-\frac{1}{q_0'\left(\frac{p'}{q_0'}\right)'q}=\frac{1}{p'}+\frac{1}{q_0'\left(\frac{p'}{q_0'}\right)'q'}\leq \frac{1}{p'}+\frac{1}{q'}.
\]
By maximizing the function $t\mapsto t^{1/t}$ for $t\geq 1$, we note that $(q')^{1/q'}\leq e^{1/e}$. Hence, by combining Lemma \ref{lem:sparsecoifman} and \eqref{eq:appcoif2}, the result follows.
\end{proof}
\begin{proof}[Proof of Theorem \ref{thm:main1}]
Set $v:=w^{(q_0/p)'}\in A_1$ and let $\kappa$ be the constant from Proposition \ref{prop:weightprop}(iii). Setting $q=1+1/(\kappa[v]_{A_\infty})$, we have $q'\simeq[v]_{A_\infty}$ and
\[
\M_{q(q_0/p)'}w=(\M_q v)^{\frac{1}{(q_0/p)'}}\lesssim[v]^{\frac{1}{(q_0/p)'}}_{A_1}w.
\]
Hence, from Lemma \ref{lem:main} it follows that
\[
|\langle Tf,g\rangle|\lesssim c_p[v]^{\frac{1}{p'}}_{A_\infty}[v]^{^{\frac{1}{p(q_0/p)'}}}_{A_1}\|f\|_{L^p(w)}\|g\|_{L^{p'}(w^{1-p'})},
\]
proving the result.
\end{proof}
\begin{proof}[Proof of Theorem \ref{thm:main2}]
The proof uses arguments similar to the ones presented in the proof of Proposition \ref{prop:sparseim}. We use the equivalence
\begin{equation}\label{eq:weakwt}
\|T\|_{L^{p_0}(w)\to L^{p_0,\infty}(w)}\simeq\sup_{\|f\|_{L^{p_0}(w)}=1}\sup_{\substack{E\subseteq\R^n\\ 0<w(E)<\infty}}\inf_{\substack{E'\subseteq E\\ w(E)\leq 2 w(E')}}\sup_{|h|\leq\chi_{E'}} w(E)^{\frac{1}{p_0}-1}\left|\int(Tf)hw\,\mathrm{d}\mu\right|.
\end{equation}
Fixing a function $f\in\mathcal{D}$ with $\|f\|_{L^{p_0}(w)}=1$ and a measurable set $E$, we set $\Omega:=\{\M^{\mathscr{B}}(|f|^{p_0})> 2c[w]_{A_1} w(E)^{-1}\}$, where $\M^{\mathscr{B}}$ denotes the uncentered maximal operator with respect to all balls $B\subseteq\R^n$ and where $c=c(n,\nu)>0$ is the constant appearing in the inequality $\|\M^{\mathscr{B}}\phi\|_{L^{1,\infty}(w)}\leq c[w]_{A_1}\|\phi\|_{L^1(w)}$, which is a consequence of \eqref{eq:classfefstein}. We have
\begin{equation}\label{eq:needthis}
w(\Omega)\leq\frac{c[w]_{A_1}w(E)}{2c[w]_{A_1}}\int\!|f|^{p_0}w\,\mathrm{d}\mu=\frac{w(E)}{2}
\end{equation}
and thus, setting $E':=E\backslash\Omega$, we have $w(E')\geq w(E)-w(\Omega)\geq w(E)/2$.

By applying Lemma \ref{lem:caldzyg1} with $|f|^{p_0}\in L^1$, we obtain a disjoint collection $\mathscr{P}\subseteq\D^\alpha$ of cubes so that $\Omega=\cup_{P\in\mathscr{P}}P$ and functions $g$, $b$ so that $|f|^{p_0}=g+b$, where
\[
g=|f|^{p_0}\chi_{\Omega^c}+\sum_{P\in\mathscr{P}}\langle|f|^{p_0}\rangle_{1,P}\chi_P
\]
and
\[
\|g\|_\infty\lesssim\frac{[w]_{A_1}}{w(E)}.
\]
Picking a function $h$ satisfying $|h|\leq\chi_{E'}$ and $hw\in\mathcal{D}$, we apply the sparse domination property to the pair $f$, $hw$ to find a sparse collection $\Sp\subseteq\D^\alpha$ so that, by using Lemma \ref{lem:main} with the weight $w\chi_{E'}$, for all $p_0<p<q_0$ and $1<q<\infty$ we have
\begin{equation}\label{eq:lempropest}
\begin{split}
\left|\int(Tf)hw\,\mathrm{d}\mu\right|&=|\langle Tf,hw\rangle|\leq\sum_{\substack{Q\in\Sp\\ Q\cap E'\neq\emptyset}}\left(\frac{1}{|Q|}\int_Q\!g\,\mathrm{d}\mu\right)^{\frac{1}{p_0}}\langle hw\rangle_{q_0',Q}|Q|\\
&\lesssim c_p(q')^{\frac{1}{p'}}\||g|^{\frac{1}{p_0}}\|_{L^p(\M_{q(q_0/p)'}(w\chi_{E'}))}\|hw\|_{L^{p'}(w^{1-p'})}\\
&\lesssim c_p(q')^{\frac{1}{p'}}[w]_{A_1}^{\frac{1}{p_0}-\frac{1}{p}}w(E)^{\frac{1}{p}-\frac{1}{p_0}}\|g\|^{\frac{1}{p}}_{L^1(\M_{q(q_0/p)'}(w\chi_{E'}))}w(E')^{\frac{1}{p'}}.
\end{split}
\end{equation}
Note here that we have used the fact that the terms involving $b$ cancel in the exact same way as they do in the proof of Proposition \ref{prop:sparseim}.

Similar to what is done in \cite{perez94, lerner08, hytonen13}, we deal with the term involving $g$ as follows: We remark that for a cube $P\in\D^\alpha$ we have
\begin{equation}\label{eq:maxconst}
\M(\phi\chi_{P^c})(x)=\essinf_P\M(\phi\chi_{P^c})\quad\text{for all $x\in P$.}
\end{equation}
Indeed, let $x,y\in P$ and let $R\in\D^\alpha$ so that $x\in R$. Then either $R\subseteq P$ or $P\subseteq R$. In the first case we have $\langle\phi\chi_{P^c}\rangle_{1,R}=0$ while in the second case we have $y\in R$ and thus $\langle\phi\chi_{P^c}\rangle_{1,R}\leq\M(\phi\chi_{P^c})(y)$. Thus, we may conclude that $\M(\phi\chi_{P^c})(x)\leq\M(\phi\chi_{P^c})(y)$, proving \eqref{eq:maxconst} by symmetry. Using this result, we find, since $E'\subseteq P^c$ for all $P\in\mathscr{P}$, that
\begin{align*}
\int_\Omega\!|g|\M_{q(q_0/p)'}(w\chi_{E'})\,\mathrm{d}\mu&\leq\sum_{P\in\mathscr{P}}\essinf_P\M_{q(q_0/p)'}(w\chi_{P^c})\int_P\!|f|^{p_0}\,\mathrm{d}\mu\\
&\leq \int_\Omega\!|f|^{p_0}\M_{q(q_0/p)'}w\,\mathrm{d}\mu.
\end{align*}
Since $g=|f|^{p_0}$ on $\Omega^c$, we conclude that
\begin{equation}\label{eq:gtof}
\|g\|_{L^1(\M_{q(q_0/p)'}(w\chi_{E'}))}\leq\|f\|^{p_0}_{L^{p_0}(\M_{q(q_0/p)'}w)}.
\end{equation}

We first assume that $q_0<\infty$. We set $v:=w^{(q_0/p_0)'}\in A_1$ and choose
\[
p=p_0+\frac{q_0-p_0}{1+2\kappa[v]_{A_\infty}},\quad q=\frac{2+2\kappa[v]_{A_\infty}}{1+2\kappa[v]_{A_\infty}}.
\]
Then we have $q'=2+2\kappa[v]_{A_\infty}\simeq[v]_{A_\infty}$, and
\[
q\bigg(\frac{q_0}{p}\bigg)'\bigg/\bigg(\frac{q_0}{p_0}\bigg)'=1+\frac{1}{\kappa[v]_{A_\infty}}
\]
so that it follows from Proposition \ref{prop:weightprop}(iii) that
\[
\M_{q(q_0/p)'}w\lesssim[v]_{A_1}^{\frac{1}{(q_0/p_0)'}}w.
\]
Thus, it follows from \eqref{eq:lempropest}, \eqref{eq:gtof}, and Proposition \ref{prop:weightprop}(ii) that
\begin{equation}\label{eq:almostfin}
w(E)^{\frac{1}{p_0}-1}\left|\int(Tf)hw\,\mathrm{d}\mu\right|\lesssim c_p[v]^{\frac{1}{p'}}_{A_\infty}[w]^{\frac{1}{p_0}-\frac{1}{p}}_{A_1}[v]^{\frac{1}{p(q_0/p_0)'}}_{A_1}\leq c_p[v]^{\frac{1}{p'}}_{A_\infty}\left([w]_{A_1}[w]_{\RH_{(q_0/p_0)'}}\right)^{\frac{1}{p_0}}.
\end{equation}
Next, we note that
\[
\frac{1}{p'}=\frac{q_0-1+2\kappa[v]_{A_\infty}(p_0-1)}{q_0+2\kappa[v]_{A_\infty}p_0}\leq\frac{q_0-1}{2\kappa[v]_{A_\infty}p_0}+\frac{1}{p_0'}
\]
and thus
\[
[v]_{A_\infty}^{\frac{1}{p'}}\lesssim[v]_{A_\infty}^{\frac{1}{p_0'}}.
\]
Moreover, we compute
\begin{align*}
c_p&=\left[\frac{2(q_0-1)(q_0+2\kappa[v]_{A_\infty}p_0)}{2\kappa[v]_{A_\infty}(q_0-p_0)}\right]^{\frac{1}{q_0'}}\left[\frac{p_0(q_0+2\kappa[v]_{A_\infty})(q_0-1+2\kappa[v]_{A_\infty}(p_0-1))}{(q_0-p_0)^2}\right]^{\frac{1}{p_0}}\\
&\lesssim[v]_{A_\infty}^{\frac{1}{p_0}}\left[1+(p_0-1)[v]_{A_\infty}\right]^{\frac{1}{p_0}}.
\end{align*}
Hence, it follows from \eqref{eq:weakwt} and \eqref{eq:almostfin} that
\[
\|T\|_{L^{p_0}(w)\to L^{p_0,\infty}(w)}\lesssim[v]_{A_\infty}\left[1+(p_0-1)[v]_{A_\infty}\right]^{\frac{1}{p_0}}\left([w]_{A_1}[w]_{\RH_{(q_0/p_0)'}}\right)^{\frac{1}{p_0}}.
\]
The result follows by considering the cases $p_0=1$ and $p_0>1$ separately.

Now we assume that $q_0=\infty$. Taking $q=1+1/(\kappa[w]_{A_\infty})$ we have $q'\simeq[w]_{A_\infty}$. Thus, from \eqref{eq:lempropest} and Proposition \ref{prop:weightprop}(iii) we obtain
\begin{equation}\label{eq:wealmfin}
w(E)^{\frac{1}{p_0}-1}\left|\int(Tf)hw\,\mathrm{d}\mu\right|\lesssim c_p[w]^{\frac{1}{p'}}_{A_\infty}[w]^{\frac{1}{p_0}}_{A_1}
\end{equation}
for all $p_0<p<\infty$. Choosing $p=p_0+1/(\log(e+[w]_{A_\infty}))$, we have
\[
\frac{1}{p'}=\frac{1+(p_0-1)\log(e+[w]_{A_\infty})}{1+p_0\log(e+[w]_{A_\infty})}\leq\frac{1}{p_0\log(e+[w]_{A_\infty})}+\frac{1}{p_0'}
\]
so that
\begin{equation}\label{eq:wafend}
[w]_{A_\infty}^{\frac{1}{p'}}\lesssim[w]_{A_\infty}^{\frac{1}{p_0'}}.
\end{equation}
Moreover, we compute
\begin{align*}
c_p&=p\left[p_0(1+(p_0-1)\log(e+[w]_{A_\infty}))(1+p_0\log(e+[w]_{A_\infty}))\right]^{\frac{1}{p_0}}\\
&\lesssim\big[1+(p_0-1)\log(e+[w]_{A_\infty})\big]^{\frac{1}{p_0}}\log(e+[w]_{A_\infty})^{\frac{1}{p_0}}.
\end{align*}
Hence, by \eqref{eq:weakwt}, \eqref{eq:wealmfin}, and \eqref{eq:wafend}, we conclude that
\[
\|T\|_{L^{p_0}(w)\to L^{p_0,\infty}(w)}\lesssim[w]^{\frac{1}{p_0}}_{A_1}[w]^{\frac{1}{p_0'}}_{A_\infty}\big[1+(p_0-1)\log(e+[w]_{A_\infty})\big]^{\frac{1}{p_0}}\log(e+[w]_{A_\infty})^{\frac{1}{p_0}}.
\]
By considering the cases $p_0=1$ and $p_0>1$ separately, the desired result follows.
\end{proof}
\begin{proof}[Proof of Theorem \ref{thm:maindual}]
We use the equivalence
\begin{equation}\label{eq:weakwt2}
\left\|\frac{T^\ast f}{w^{\frac{1}{q_0'}}}\right\|_{L^{q_0',\infty}(w)}\simeq\sup_{\substack{E\subseteq\R^n\\ 0<w(E)<\infty}}\inf_{\substack{E'\subseteq E\\ w(E)\leq 2 w(E')}}\sup_{|h|\leq\chi_{E'}} w(E)^{-\frac{1}{q_0}}\left|\left\langle\frac{T^\ast f}{w^{\frac{1}{q_0'}}},hw\right\rangle\right|.
\end{equation}
Let $f\in\mathcal{D}$ with $\|f\|_{q_0'}=1$ and let $E\subseteq\R^n$ with $0<w(E)<\infty$. We denote by $\M_w^{\mathscr{B}}$ the uncentered maximal operator over balls with respect to the measure $w\,\mathrm{d}\mu$. Then we define
\[
\Omega:=\Big\{\M_w^{\mathscr{B}}\Big(\frac{|f|^{q_0'}}{w}\Big)> 2cw(E)^{-1}\Big\},
\]
where $c=c(n,\nu)>0$ is the constant appearing in the inequality $\|\M_w^{\mathscr{B}}\phi\|_{L^{1,\infty}(w)}\leq c\|\phi\|_{L^1(w)}$. We have
\[
w(\Omega)\leq\frac{cw(E)}{2c}\int\!\frac{|f|^{q_0'}}{w}w\,\mathrm{d}\mu=\frac{w(E)}{2}
\]
which, setting $E':=E\backslash\Omega$, implies that $w(E')\geq w(E)-w(\Omega)\geq w(E)/2$.

By applying the Whitney Decomposition Theorem to $\Omega$, see Theorem \ref{thm:whitney} below, we obtain a disjoint collection $\mathscr{P}\subseteq\D^\alpha$ of cubes so that $\Omega=\cup_{P\in\mathscr{P}}P$ with the property that for each $P\in\mathscr{P}$ there exists a ball $B(P)$ containing $P$ so that $B(P)\cap\Omega^c\neq\emptyset$ and $|B(P)|\lesssim |P|$, where the implicit constant depends only on $n$ and $\nu$, see also the proof of Lemma \ref{lem:caldzyg1}. Moreover, we obtain functions $g$, $b$ so that $|f|^{q_0'}=g+b$, where
\[
g=|f|^{q_0'}\chi_{\Omega^c}+\sum_{P\in\mathscr{P}}\langle|f|^{q_0'}\rangle_{1,P}\chi_P.
\]
Next, we pick a function $h$ satisfying $|h|\leq\chi_{E'}$ and $hw^{1/q_0}\in\mathcal{D}$, and fix a $p_0<p<q_0$ to be chosen later. We apply the sparse domination property to the pair $hw^{1/q_0}$, $f$ to find a sparse collection $\Sp\subseteq\D^\alpha$ so that, by applying Lemma \ref{lem:main} with the weight $w^{1/(q_0/p)'}$, we find that for all $1<q<\infty$ we have
\begin{equation}\label{eq:lempropestv}
\begin{split}
\left|\left\langle\frac{T^\ast f}{w^{\frac{1}{q_0'}}},hw\right\rangle\right|&=\left|\left\langle f,T\left(hw^{\frac{1}{q_0}}\right)\right\rangle\right|\leq\sum_{\substack{Q\in\Sp\\ Q\cap E'\neq\emptyset}}\left\langle hw^{\frac{1}{q_0}}\right\rangle_{p_0,Q}\left(\frac{1}{|Q|}\int_Q\!g\,\mathrm{d}\mu\right)^{\frac{1}{q_0'}}|Q|\\
&\lesssim c_p(q')^{\frac{1}{p'}}\|hw^{\frac{1}{q_0}}\|_{L^{p}((\M_{q}w)^{\frac{1}{(q_0/p)'}})}\||g|^{\frac{1}{q'_0}}\|_{L^{p'}(w^{\frac{1-p'}{(q_0/p)'}})}\\
&=c_p(q')^{\frac{1}{p'}}\|h w^{\frac{1}{q_0}}\|_{L^{p}((\M_{q}w)^{\frac{1}{(q_0/p)'}})}\left(\int_{\Omega^c}\!|f|^{p'}w^{\frac{1-p'}{(q_0/p)'}}\,\mathrm{d}\mu+\sum_{P\in\mathscr{P}}\langle f\rangle_{q_0',P}^{p'}\int_P\!w^{\frac{1-p'}{(q_0/p)'}}\,\mathrm{d}\mu\right)^{\frac{1}{p'}},
\end{split}
\end{equation}
where the terms involving $b$ cancel in the same way as before.

Choosing $q=1+1/(\kappa[w]_{A_\infty})$ so that $q'\simeq[w]_{A_\infty}$, it follows from Proposition \ref{prop:weightprop}(iii) that
\begin{equation}\label{eq:hv1}
\begin{split}
(q')^{\frac{1}{p'}}\|h w^{\frac{1}{q_0}}\|_{L^{p}((\M_{q}w)^{\frac{1}{(q_0/p)'}})}&\lesssim[w]^{\frac{1}{p'}}_{A_\infty}[w]_{A_1}^{\frac{1}{p(q_0/p)'}}\left(\int|h|^p w^{\frac{p}{q_0}}w^{\frac{1}{(q_0/p)'}}\,\mathrm{d}\mu\right)^{\frac{1}{p}}\\
&\leq[w]^{\frac{1}{p'}}_{A_\infty}[w]_{A_1}^{\frac{1}{p(q_0/p)'}}w(E')^{\frac{1}{p}}.
\end{split}
\end{equation}

Next, since $|f|\lesssim w(E)^{-1/q_0'}w^{1/q_0'}$ in $\Omega^c$, we have
\begin{equation}\label{eq:hv2}
\int_{\Omega^c}\!|f|^{p'}w^{\frac{1-p'}{(q_0/p)'}}\,\mathrm{d}\mu\leq w(E)^{\frac{q_0'-p'}{q_0'}}\int_{\Omega^c}\!|f|^{q_0'}w^{\frac{p'-q_0'}{q_0'}}w^{\frac{q_0'-p'}{q_0'}}\,\mathrm{d}\mu\leq w(E)^{\frac{q_0'-p'}{q_0'}}.
\end{equation}
Furthermore, fixing a $P\in\mathscr{P}$ and $x\in B(P)\cap\Omega^c$, we have
\begin{align*}
\langle f\rangle_{q_0',P}^{p'-q_0'}&\lesssim\langle f\rangle_{q_0',B(P)}^{p'-q_0'}\leq\left[\M^{\mathscr{B}}_w\Big(\frac{|f|^{q_0'}}{w}\Big)(x)\right]^{\frac{p'-q_0'}{q_0'}}\langle w\rangle_{1,B(P)}^{\frac{p'-q_0'}{q_0'}}\\
&\lesssim w(E)^{\frac{q_0'-p'}{q_0'}}\langle w\rangle_{1,B(P)}^{\frac{p'-q_0'}{q_0'}}
\end{align*}
and
\[
\Big\langle w^{\frac{q_0'-p'}{q_0'}}\Big\rangle_{1,P}\lesssim[w]_{A_1}^{\frac{p'-q_0'}{q_0'}}\Big\langle(\M^{\mathscr{B}}w)^{\frac{q_0'-p'}{q_0'}}\Big\rangle_{1,B(P)}\leq[w]_{A_1}^{\frac{p'-q_0'}{q_0'}}\langle w\rangle_{1,B(P)}^{\frac{q_0'-p'}{q_0'}}
\]
so that
\begin{equation}\label{eq:hv3}
\sum_{P\in\mathscr{P}}\langle f\rangle_{q_0',P}^{p'}\int_P\!w^{\frac{1-p'}{(q_0/p)'}}\,\mathrm{d}\mu\lesssim[w]^{\frac{p'-q_0'}{q_0'}}_{A_1}w(E)^{\frac{q_0'-p'}{q_0'}}\sum_{P\in\mathscr{P}}\langle|f|^{q_0'}\rangle_{1,P}|P|\leq[w]^{\frac{p'-q_0'}{q_0'}}_{A_1}w(E)^{\frac{q_0'-p'}{q_0'}}.
\end{equation}
Thus, by combining \eqref{eq:hv1},\eqref{eq:hv2}, and \eqref{eq:hv3} with \eqref{eq:lempropestv}, we conclude that
\begin{equation}\label{eq:hvfin}
\begin{split}
\left|\left\langle\frac{T^\ast f}{w^{\frac{1}{q_0'}}},hw\right\rangle\right|&\lesssim c_p[w]_{A_\infty}^{\frac{1}{p'}}[w]_{A_1}^{\frac{1}{p(q_0/p)'}}[w]_{A_1}^{\frac{p'-q_0'}{p'q_0'}}w(E')^{\frac{1}{p}}w(E)^{\frac{q_0'-p'}{p'q_0'}}\\
&\leq c_p[w]_{A_\infty}^{\frac{1}{q_0'}}[w]_{A_1}^{\frac{2}{p(q_0/p)'}}w(E)^{\frac{1}{q_0}}.
\end{split}
\end{equation}
By writing $L:=\log(e+[w]_{A_1})$ and choosing
\[
p=p_0\frac{q_0}{q_0+L}+q_0\frac{L}{q_0+L}\in(p_0,q_0)
\]
we have
\[
[w]_{A_1}^{\frac{2}{p(q_0/p)'}}=[w]_{A_1}^{\frac{2}{(q_0/p_0)'(p_0+L)}}\leq e^{2/e}
\]
and
\[
c_p=\left[\frac{q_0-1}{q_0-p_0}(p_0+L)\right]^{\frac{1}{q_0'}}\left[p_0\bigg(\frac{q_0}{p_0}\bigg)'\frac{p_0+L}{L}\frac{(p_0-1)\left(\frac{q_0}{p_0}\right)'+\frac{q_0-1}{q_0-p_0}L}{L}\right]^{\frac{1}{p_0}}\lesssim L^{\frac{1}{q_0'}}.
\]
Thus, by \eqref{eq:weakwt2} and \eqref{eq:hvfin} we have
\[
\left\|\frac{T^\ast f}{w^{\frac{1}{q_0'}}}\right\|_{L^{q_0',\infty}(w)}\lesssim\left([w]_{A_\infty}L\right)^{\frac{1}{q_0'}},
\]
as desired.
\end{proof}
\section{Extensions of the results to spaces of homogeneous type}\label{sec:ext}
This section is dedicated to extending our main results to spaces of homogeneous type $(X,d,\mu)$. Here $X$ is a set equipped with a quasimetric $d$, i.e., a mapping satisfying the usual properties of a metric except for the triangle inequality, which is replaced by the estimate
\[
d(x,y)\leq A(d(x,z)+d(z,y))
\]
for a constant $A\geq 1$, and $\mu$ is a Borel measure on $X$ satisfying the doubling property, i.e., there is a $C>0$ such that
\[
\mu(B(x;2r))\leq C\mu(B(x;r))
\]
for all $x\in X$, $r>0$. Taking the smallest such $C$ we set $\nu:=\log_2 C$. Furthermore, we write $|E|:=\mu(E)$ for all Borel sets $E\subseteq X$. The doubling property implies that for $x\in X$ and $R\geq r>0$ we have
\begin{equation}\label{eq:doublingpropnu}
|B(x;R)|\leq C\left(\frac{R}{r}\right)^\nu|B(x;r)|
\end{equation}
In turn, this implies that if $y\in B(x;R)$ for $x\in X$, then for $0<r\leq 2AR$ we have
\begin{equation}\label{eq:doublingpropnutwo}
|B(x;R)|\leq C\left(\frac{2AR}{r}\right)^\nu|B(y;r)|.
\end{equation}
We make the additional assumption that $0<|B|<\infty$ for all balls $B\subseteq X$. This property ensures that $X$ is separable \cite[Proposition 1.6]{bjorn11}.

Finally, we make the assumption that Lebesgue's Differentiation Theorem holds. This holds, for example, when $X$ is a domain in $\R^n$. Indeed, more generally, if $A=1$ (that is, $(X,d)$ is a metric space) and $\mu$ is an inner regular Borel outer measure, then Lebesgue's Differentation Theorem holds, see \cite[Section 14]{hajlasz00}. This assumption is used for the $L^\infty$ bound on the good part in our Calder\'on-Zygmund decompositions.

We will consider the situations where $X$ is unbounded and where $X$ is bounded separately, the latter situation being simpler. To facilitate this, we impose that the underlying quasimetric space $(X,d)$ has exactly one of the following properties:
\begin{enumerate}[(I)]
\item There is a constant $\gamma>0$ so that
\begin{equation}\label{eq:gamma}
\diam(B(x;r))\geq\gamma r
\end{equation}
for all $x\in X$, $r>0$;
\item $\diam X<\infty$.
\end{enumerate}
We note that property (I) and property (II) are mutually exclusive, since (I) implies that $X$ is unbounded. The extra assumption for the unbounded case is not too restrictive in the sense that the unbounded spaces in our applications usually do satisfy property (I). We point out that when $(X,d)$ is a connected metric space, then it satisfies either (I) or (II):
\begin{proposition}
Suppose $X$ is metric, connected, and unbounded. Then (I) holds with $\gamma=1$.
\end{proposition}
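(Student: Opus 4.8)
The plan is to exploit connectedness through the observation that the distance-to-a-fixed-point function has connected, hence interval-shaped, image in $[0,\infty)$; this is the metric-space substitute for an intermediate value / arc-length argument.

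First I would fix an arbitrary $x\in X$ and $r>0$ and introduce $f\colon X\to[0,\infty)$, $f(y):=d(x,y)$. The triangle inequality gives $|f(y)-f(z)|\le d(y,z)$, so $f$ is $1$-Lipschitz and in particular continuous. Since $X$ is connected, $f(X)$ is a connected subset of $[0,\infty)$, i.e.\ an interval, and it contains $f(x)=0$.

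Next I would use unboundedness to see that this interval is unbounded above. As $\diam X=\infty$, for every $M>0$ there are $y,z\in X$ with $d(y,z)>M$, whence $\max(d(x,y),d(x,z))\ge\tfrac12 d(y,z)>M/2$; thus $\sup_{y\in X}f(y)=\infty$. Combined with the previous step, $f(X)$ is an interval containing $0$ and unbounded above, so $f(X)\supseteq[0,r)$ (indeed $f(X)=[0,\infty)$). Then I would conclude: since $x\in B(x;r)$, for each $y\in B(x;r)$ we have $\diam(B(x;r))\ge d(x,y)=f(y)$; and for every $s\in[0,r)$ there is $y_s\in X$ with $f(y_s)=s<r$, so $y_s\in B(x;r)$ and $\diam(B(x;r))\ge s$. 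Letting $s\uparrow r$ gives $\diam(B(x;r))\ge r$, and since $x,r$ were arbitrary, property (I) holds with $\gamma=1$.

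There is no genuine obstacle here; the argument is elementary. The only points deserving a word of care are that connectedness (not path-connectedness) already suffices, because the intermediate value property is applied to the continuous map $f$ via "continuous image of a connected set is connected," and that picking the center $x$ itself as one of the two diameter-realizing points makes the estimate insensitive to whether $B(x;r)$ is taken to be the open or the closed ball.
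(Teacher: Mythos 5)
Your proof is correct and rests on the same idea as the paper's: connectedness forces $B(x;r)$ to contain points whose distance to the center $x$ is arbitrarily close to $r$, with unboundedness guaranteeing points outside the ball. You package this via the intermediate value property of the $1$-Lipschitz map $y\mapsto d(x,y)$, whereas the paper argues directly that $X\neq\overline{B(x;r-\varepsilon)}\cup B(x;r)^c$ by connectedness and picks a point in the resulting annulus; these are the same argument in slightly different clothing.
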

\begin{proof}
Let $r>\varepsilon>0$. The assumptions on $X$ imply that $X\neq\overline{B(x;r-\varepsilon)}\cup B(x;r)^c$ and thus we can pick $y\in B(x;r)\backslash\overline{B(x;r-\varepsilon)}$ so that $\diam(B(x;r))\geq d(x,y)\geq r-\varepsilon$, proving the result.
\end{proof}
A non-connected example where (I) holds with $\gamma=1/2$ is the subset $(-\infty,0)\cup (1,2)$ of the real line. An example where (I) fails is any metric space that has an isolated point.

We will use the following definition of a dyadic system in $X$.
\begin{definition}
Let $0<c_0\leq C_0<\infty$ and $0<\delta<1$. If for each $k\in\Z$ we have a pairwise disjoint collection $\D_k=(Q^k_j)_{j\in J_k}$ of measurable subsets of $X$ and a collection of points $(z^k_j)_{j\in J_k}$, then we call $(\D_k)_{k\in\Z}$ a \textit{dyadic system} in $X$ with parameters $c_0$, $C_0$, $\delta$, if it satisfies the following properties:
\begin{enumerate}[(i)]
\item for all $k\in\Z$ we have
\[
X=\bigcup_{j\in J_k}Q_j^k;
\]
\item for $l\geq k$, if $Q\in\D_l$ and $Q'\in\D_k$, we have that either $Q\cap Q'=\emptyset$ or $Q\subseteq Q'$;
\item for each $k\in\Z$ and $j\in J_k$ we have
\[
B(z^k_j;c_0\delta^k)\subseteq Q^k_j\subseteq B(z^k_j;C_0\delta^k);
\]
\item for $l\geq k$, if $Q^l_{j'}\subseteq Q^k_j$, then $B(z^l_{j'};C_0\delta^k)\subseteq B(z^k_j;C_0\delta^k)$.
\end{enumerate}
\end{definition}
The elements of a dyadic system are called cubes. We call $z^k_j$ the \textit{center} of $Q^k_j$. If $Q\in\D_k$, then we call the unique cube $Q'\in\D_{k-1}$ so that $Q\subseteq Q'$ the \textit{parent} of $Q$. Furthermore, we say that $Q$ is a \textit{child} of $Q'$. Note that it is possible that for a cube $Q$ there exists more than one $k\in\Z$ so that $Q\in\D_k$. Hence, when speaking of a child or the parent of $Q$, this should be with respect to a specific $k\in\Z$ where $Q\in\D_k$ to avoid ambiguity.

For a detailed discussion on the construction of dyadic systems and for the following theorem we refer the reader to \cite{hytonen12} and references therein. 

\begin{theorem}\label{thm:dyadicgrids}
There exist $0<c_0<C_0<\infty$, $0<\delta<1$, $\rho>0$ and a positive integer $K$, so that there are dyadic system $\D^1,\ldots,\D^K$ in $X$ with parameters $c_0$, $C_0$, $\delta$ so that for each $x\in X$ and $r>0$ there exists an $\alpha\in\{1,\ldots, K\}$ and $Q\in\D^\alpha$ so that
\[
B(x;r)\subseteq Q\quad\text{and}\quad \diam(Q)\leq\rho r.
\]
\end{theorem}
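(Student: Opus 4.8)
The plan is to construct the systems $\D^1,\dots,\D^K$ by Christ's method, in the refined form due to Hyt\"onen and Kairema \cite{hytonen12}, and then to read off the covering property from the sandwiching bounds in (iii). \emph{Single system.} Fix $\delta\in(0,1)$ small enough relative to the quasitriangle constant $A$ (say $12A^3\delta\le1$). For each $k\in\Z$ use separability to choose a maximal $\delta^k$-separated set of centers $\{z^k_j\}_{j\in J_k}$, which is then automatically $\delta^k$-dense; after passing to a common refinement of the indexings so that each center at level $k$ persists at all finer levels, assign to every level-$(k+1)$ center a nearest level-$k$ center as its parent, and define the cubes $Q^k_j$ from the resulting tree by the usual recursive union procedure. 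Properties (i), (ii), (iv) are then built into the construction, and the sandwiching (iii) with $c_0\simeq\delta$, $C_0\simeq1$ follows from the density and separation of the centers together with the doubling estimate \eqref{eq:doublingpropnu}.

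\emph{Adjacent systems.} A single dyadic system fails the covering property only for balls $B(x;r)$ that straddle a cube boundary at the scale $k$ with $\delta^k\simeq r$. To remedy this I would run the construction $K$ times with different maximal separated nets, chosen so that at every scale $k$ the union of the $K$ center sets is $\eps\delta^k$-dense for a small fixed $\eps=\eps(A)$; by \eqref{eq:doublingpropnu} a ball of radius $\delta^k$ is covered by a number of balls of radius $\eps\delta^k$ bounded in terms of $\nu$ and $A$ only, so $K$ can be taken to depend only on $\nu$ and $A$.

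\emph{Covering property.} Given $x\in X$ and $r>0$, pick $m\in\Z$ with $\delta^m$ comparable to $r/\eps$. By the density arrangement of the previous step there is an $\alpha\in\{1,\dots,K\}$ and a level-$m$ center $z$ of $\D^\alpha$ with $d(x,z)\le\eps\delta^m$, whence the quasitriangle inequality gives $B(x;r)\subseteq B(z;c_0\delta^m)\subseteq Q$ for the level-$m$ cube $Q\in\D^\alpha$ centered at $z$, while $\diam(Q)\le 2AC_0\delta^m\le\rho r$ for a suitable $\rho=\rho(A,\delta,\eps,C_0)$.

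The delicate point is the construction of the adjacent systems: one must produce finitely many separated nets whose union is uniformly $\eps$-dense at every scale \emph{simultaneously}, and then check that the perturbed trees still satisfy nestedness (ii) and the sandwiching (iii) with scale-independent constants. This is precisely the content of the Hyt\"onen--Kairema theorem, which I would invoke directly from \cite{hytonen12}; alternatively one reproduces their random-shift argument, choosing the nets by a probabilistic construction and using a union bound over scales to extract a deterministic favorable family of size $K$.
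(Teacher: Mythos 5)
Your proposal is in line with the paper's treatment: the paper gives no proof of this theorem and simply cites the Hyt\"onen--Kairema construction in \cite{hytonen12}, which is exactly the reference you ultimately invoke for the delicate adjacent-systems step. Your preliminary sketch of the single-system construction and the covering argument is a reasonable outline of that cited result, so the approach is essentially the same.
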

Writing $\D:=\cup_{\alpha=1}^K\D^\alpha$, one defines the respective notions for weight classes accordingly. Likewise, we say that a collection $\Sp\subseteq\D$ is called \emph{$\eta$-sparse} for $0<\eta\leq1$ if for each $\alpha\in\big\{1,\ldots, K\big\}$ there is a pairwise disjoint collection $(E_Q)_{Q\in\Sp\cap\D^\alpha}$ of measurable sets so that $E_Q\subseteq Q$ and $|Q|\leq\eta^{-1}|E_Q|$.

For our main results we require that the Calder\'on-Zygmund decompositions we take are adapted to the dyadic grids obtained from this theorem. The standard Calder\'on-Zygmund decomposition as found in \cite{CW71} is not precise enough for these purposes, see also Remark \ref{rem:czob}.

For $1\leq p_0<q_0\leq\infty$ we may define the class $S(p_0,q_0)$ as the class of those operators $T$ that satisfy the property that there is a constant $c>0$ and an $0<\eta\leq 1$ so that for each pair of functions $f$, $g$ in an appropriately large class of functions on $X$ there is an $\eta$-sparse collection $\Sp\subseteq\D$ so that
\[
|\langle Tf,g\rangle|\leq c\sum_{Q\in\Sp}\langle f\rangle_{p_0,Q}\langle g\rangle_{q_0',Q}|Q|.
\]
The remainder of this section will be dedicated to proving the following result:
\begin{theorem}\label{thm:thingswork}
Let $1\leq p_0<q_0\leq\infty$ and suppose that $(X,d)$ satisfies either property (I) or property (II). Then for $T\in S(p_0,q_0)$, the results of Theorem \ref{thm:main1} and Theorem \ref{thm:main2} remain true, where the dependence on $n$ of the constants changes to dependence on the parameters of the dyadic system (and also $\gamma$ in the case (I)). Similarly, the results of Theorem \ref{thm:maindual} remain true in the case that property (I) is satisfied.
\end{theorem}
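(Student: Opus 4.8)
The plan is to audit the arguments of Section~\ref{sec:proofs} and to verify that every ingredient survives the passage to a space of homogeneous type. Inspecting the proofs of Theorems~\ref{thm:main1}, \ref{thm:main2} and~\ref{thm:maindual}, one sees that they are assembled from exactly the following pieces: the main Lemma~\ref{lem:main} together with its constituents Lemma~\ref{lem:intermedmax} and Lemma~\ref{lem:sparsecoifman}; the dyadic maximal estimates \eqref{eq:classfefstein}, \eqref{eq:fefstein}, \eqref{eq:maxaone} and \eqref{eq:maxwduo}; the functional-analytic characterizations \eqref{eq:weakchar}, \eqref{eq:weakwt} and \eqref{eq:weakwt2} of the weak type norms; the weight facts in Proposition~\ref{prop:weightprop}; and the Calder\'on--Zygmund and Whitney decompositions adapted to the dyadic grids, that is, Lemma~\ref{lem:caldzyg1} and Theorem~\ref{thm:whitney}. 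I would treat these in turn.

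First the dyadic ingredients, which require essentially no work. Lemmas~\ref{lem:intermedmax} and~\ref{lem:sparsecoifman}, hence Lemma~\ref{lem:main}, and the maximal inequalities \eqref{eq:classfefstein}--\eqref{eq:maxwduo} are statements purely about the uncentered \emph{dyadic} maximal operators $\M_p$ attached to a filtration of a $\sigma$-finite measure space; the Euclidean geometry of $\R^n$ plays no role in their proofs. By Theorem~\ref{thm:dyadicgrids} the collections $\D^1,\dots,\D^K$ are such filtrations---property (ii) of a dyadic system is precisely the nestedness that drives the stopping-time arguments behind \eqref{eq:classfefstein}, \eqref{eq:maxaone} and \eqref{eq:maxwduo}---so these results, and with them Lemma~\ref{lem:main}, carry over verbatim, with $\D^\alpha$ in place of the translated Euclidean grid and the splitting into $3^n$ grids replaced by a splitting into the $K$ grids of Theorem~\ref{thm:dyadicgrids}. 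The identities \eqref{eq:weakchar}, \eqref{eq:weakwt} and \eqref{eq:weakwt2} hold over any measure space, Proposition~\ref{prop:weightprop} is classical in spaces of homogeneous type, and the uncentered maximal operator $\M^{\mathscr B}$ over balls of $X$ satisfies $\|\M^{\mathscr B}\phi\|_{L^{1,\infty}(\mu)}\lesssim\|\phi\|_{L^1(\mu)}$ and $\|\M^{\mathscr B}\phi\|_{L^{1,\infty}(w)}\le c[w]_{A_1}\|\phi\|_{L^1(w)}$ for $w\in A_1$ by the Vitali covering lemma, available because $\mu$ is doubling. In particular the choices of the exponents $p$ and $q$ made in terms of $[v]_{A_\infty}$ and $[w]_{A_\infty}$ in the proofs of Theorems~\ref{thm:main1} and~\ref{thm:main2} all remain valid, and the quantitative exponents in the weight constants are unchanged.

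The heart of the matter---and the step I expect to be the main obstacle---is the Calder\'on--Zygmund decomposition of Lemma~\ref{lem:caldzyg1} and the Whitney decomposition of Theorem~\ref{thm:whitney}; this is the only place where the geometry of $X$, and with it the dichotomy (I)/(II), intervenes. For Theorems~\ref{thm:main1} and~\ref{thm:main2} one needs, given $\phi=|f|^{p_0}\in L^1$ and a level $\lambda\simeq w(E)^{-1}$, to write $\Omega=\{\M^{\mathscr B}\phi>\lambda\}$ as a disjoint union $\bigcup_{P\in\mathscr P}P$ of cubes from the \emph{same} grid $\D^\alpha$ in which the sparse domination takes place, with $\langle\phi\rangle_{1,P}\lesssim\lambda$ for each $P$; the $L^\infty$ bound on the good part is then supplied by Lebesgue's differentiation theorem on $\Omega^c$, and---crucially, cf.\ Remark~\ref{rem:czob}---the bad part cancels \emph{exactly} against the sparse sum restricted to cubes meeting $E'\subseteq\Omega^c$, since such a cube contains every $P\in\mathscr P$ that it meets. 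The only obstruction to producing such a decomposition is that a maximal dyadic cube of $\Omega$ might fail to have a parent lying in $\Omega^c$; in the unbounded case this is ruled out by property (I), which together with \eqref{eq:doublingpropnu} forces $\mu(B)\to\infty$ as the radius of $B$ grows, so that $\langle\phi\rangle_{1,R}<\lambda$ once $R$ is a sufficiently large cube. In the bounded case (II) I would instead note that $\lambda$ is chosen so that $w(\Omega)<w(X)$, hence $\Omega\ne X$; discarding the trivial case $\Omega=\emptyset$, one selects the maximal subcubes of $\Omega$ directly, the finitely many grid-maximal cubes causing no trouble precisely because $\Omega$ is a proper subset---this is why the two cases must be separated. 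For Theorem~\ref{thm:maindual} one needs in addition, for each stopping cube $P$, a ball $B(P)\supseteq P$ with $B(P)\cap\Omega^c\ne\emptyset$ and $|B(P)|\lesssim|P|$; producing such a ball of comparable measure uses the doubling estimates \eqref{eq:doublingpropnu}--\eqref{eq:doublingpropnutwo} \emph{together with} the lower bound $\diam(B(x;r))\ge\gamma r$ from (I), and this is exactly why Theorem~\ref{thm:maindual} is claimed only under (I).

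Granting Lemma~\ref{lem:caldzyg1} and Theorem~\ref{thm:whitney} in $X$, the proofs of Theorems~\ref{thm:main1}, \ref{thm:main2} and~\ref{thm:maindual} then transcribe line by line: one replaces $\R^n$ by $X$, the Lebesgue measure by $\mu$, Euclidean balls by balls of $X$, $\M^{\mathscr B}$ by the uncentered maximal operator over balls of $X$, the sum over $3^n$ translated grids by the sum over $\D^1,\dots,\D^K$, and tracks that every constant previously recorded as depending on $n$ (or on $n,\nu$) now depends on $\nu$ and the dyadic parameters $c_0,C_0,\delta,K$---with an extra dependence on $\gamma$ wherever property (I) is invoked, namely throughout Theorem~\ref{thm:maindual} and in the unbounded instance of Lemma~\ref{lem:caldzyg1}. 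No further change is needed.
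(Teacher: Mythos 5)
Your plan is essentially the paper's proof: transfer the purely dyadic lemmata of Section~\ref{sec:proofs} verbatim to the grids of Theorem~\ref{thm:dyadicgrids} (splitting into $K$ rather than $3^n$ grids), isolate the grid-adapted Calder\'on--Zygmund/Whitney decompositions as the only geometric input, treat case (II) of Theorem~\ref{thm:main2} by noting $w(\Omega)\leq w(E)/2<w(X)$ so that $\Omega\neq X$ and a maximal-cube selection applies, and observe that Theorem~\ref{thm:maindual} needs the Whitney-type ball property and hence property (I). The one place where you diverge, and where your sketch is slightly off, is the case-(I) decomposition for Theorem~\ref{thm:main2}: the paper does not argue via maximal dyadic subcubes at all, but proves a dyadic Whitney decomposition (Theorem~\ref{thm:whitney}), in which property (I) enters through the two-sided bound \eqref{eq:gammaq} on $\diam Q$, and then deduces Lemma~\ref{lem:caldzyg1} by enlarging each Whitney cube to a ball of comparable measure meeting $\Omega^c$; the same Whitney theorem is then reused for the dual result, so nothing extra is needed there. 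Your alternative (maximal subcubes of $\Omega$) can be made to work, but not with the justification you give: since you define $\Omega=\{\M^{\mathscr B}\phi>\lambda\}$ via the \emph{ball} maximal operator, the decay of dyadic averages over large cubes is not the relevant stopping criterion (it would be if $\Omega$ were a dyadic-maximal level set, which is exactly what the paper switches to in case (II), via Lemma~\ref{lem:caldzyg2} and the fact that $\D_{k_0}=\{X\}$ for coarse $k_0$); in case (I) the existence of maximal subcubes contained in $\Omega$ should instead come from $|\Omega|<\infty$ together with $|Q^k_x|\to\infty$ along ancestors (a reverse-doubling consequence of (I)), and the bound $\langle\phi\rangle_{1,P}\lesssim\lambda$ from the parent of $P$ meeting $\Omega^c$ plus doubling. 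So: same architecture, a different (repairable) realization of the case-(I) decomposition, with the paper's Whitney route having the advantage of serving Theorems~\ref{thm:main2} and~\ref{thm:maindual} simultaneously.
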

The main difficulty arises when one wants to take Calder\'on-Zygmund decompositions. We remark that in the cases (I) and (II) one can use the standard maximal cube arguments and localization arguments respectively to conclude that our dyadic maximal operators satisfy the usual weak and strong boundedness results. The Lemmata in Section \ref{sec:proofs} all follow in the more general setting in the same way as they have been presented, where we replace the set of test functions $\mathcal{D}$ by another appropriate class of functions that is dense in $L^p(w)$ for all $1\leq p<\infty$, $w\in A_\infty$ such as the linear span of the indicator functions functions over the balls in $X$.

From now on we consider a fixed dyadic system $\D^*=\cup_{k\in\Z}\D_k$ in $X$ with parameters $c_0$, $C_0$, $\delta$.

We first assume that we are in the easier case (II). We define the maximal operator $\M$ with respect to the cubes $Q\in\D^*$ by $\M f:=\sup_{Q\in\D^*}\langle f\rangle_{1,Q}\chi_Q$.
\begin{lemma}[Calder\'on-Zygmund Lemma in the case (II)]\label{lem:caldzyg2}
Let $f\in L^1$, $\lambda>0$, and let $\Omega:=\{\M f>\lambda\}$. If $\Omega\neq X$, then we can find a pairwise disjoint collection of cubes $\mathscr{P}\subseteq\D^*$ and a constant $c>0$, depending only on the parameters of the dyadic system, the doubling dimension $\nu$, and the quasimetric constant $A$, so that
\[
\Omega=\bigcup_{P\in\mathscr{P}}P,
\]
and for each $P\in\mathscr{P}$
\[
\lambda<\langle f\rangle_{1,P}\leq c\lambda.
\]
\end{lemma}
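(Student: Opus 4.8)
The plan is to run the classical dyadic maximal-cube (stopping-time) construction, exploiting the fact that in case (II) the dyadic system contains the whole space. First I would observe that since $0<\delta<1$ we have $c_0\delta^k>\diam X$ for all sufficiently negative $k$, so by property (iii) of the dyadic system $B(z^k_j;c_0\delta^k)=X$, which forces $Q^k_j=X$; as the cubes in $\D_k$ are pairwise disjoint and cover $X$, this gives $\D_k=\{X\}$ for all $k\le k_0$, where $k_0$ is such a threshold. In particular $X\in\D^*$, and since $\Omega\neq X$ there is a point $x_0\notin\Omega$, so $\langle f\rangle_{1,X}\le\M f(x_0)\le\lambda$.

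Next, for a fixed $x\in\Omega$ I would consider the family of cubes $Q\in\D^*$ with $x\in Q$ and $\langle f\rangle_{1,Q}>\lambda$, which is nonempty by definition of $\Omega$. By property (ii), any two cubes containing $x$ are nested, so this family is totally ordered by inclusion; and since $\langle f\rangle_{1,X}\le\lambda$, each of these cubes is a proper subset of $X$ and hence, by the previous paragraph, belongs only to generations $\D_k$ with $k\ge k_0+1$. Thus the generation indices occurring are bounded below, so the family has a largest element, which I call $Q(x)$; let $k^*=k^*(x)$ denote the smallest integer with $Q(x)\in\D_{k^*}$. I then set $\mathscr{P}:=\{Q(x):x\in\Omega\}$. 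The identity $\Omega=\bigcup_{P\in\mathscr{P}}P$ is clear: ``$\supseteq$'' because every point of a cube whose average exceeds $\lambda$ lies in $\Omega$, and ``$\subseteq$'' because $x\in Q(x)$. The collection is pairwise disjoint: if $Q(x)\cap Q(y)\neq\emptyset$, then by property (ii) one is contained in the other, say $Q(y)\subseteq Q(x)$; but then $Q(x)$ is a cube containing $y$ with $\langle f\rangle_{1,Q(x)}>\lambda$, so maximality of $Q(y)$ forces $Q(x)\subseteq Q(y)$, whence $Q(x)=Q(y)$.

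For the two-sided estimate, the lower bound $\langle f\rangle_{1,P}>\lambda$ is immediate from the definition. For the upper bound, given $P=Q(x)\in\mathscr{P}$, let $\widehat P\in\D_{k^*-1}$ be its parent with respect to the generation $k^*$; minimality of $k^*$ forces $P\subsetneq\widehat P$, and since $\widehat P$ contains $x$ and strictly contains $Q(x)$, maximality of $Q(x)$ gives $\langle f\rangle_{1,\widehat P}\le\lambda$. Then
\[
\langle f\rangle_{1,P}=\frac{1}{|P|}\int_P\!|f|\,\mathrm{d}\mu\le\frac{|\widehat P|}{|P|}\langle f\rangle_{1,\widehat P}\le\frac{|\widehat P|}{|P|}\,\lambda,
\]
so it remains to bound $|\widehat P|/|P|$. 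By property (iii), $\widehat P\subseteq B(z_{\widehat P};C_0\delta^{k^*-1})$ and $P\supseteq B(z_P;c_0\delta^{k^*})$, while $z_P\in P\subseteq\widehat P\subseteq B(z_{\widehat P};C_0\delta^{k^*-1})$; since $c_0\delta^{k^*}\le 2AC_0\delta^{k^*-1}$, the doubling estimate \eqref{eq:doublingpropnutwo} applied with center $z_{\widehat P}$, radius $R=C_0\delta^{k^*-1}$, point $y=z_P$ and radius $r=c_0\delta^{k^*}$ yields $|\widehat P|\le C\big(2AC_0/(c_0\delta)\big)^\nu|P|$, with $C$ the doubling constant. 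Hence $c:=C\big(2AC_0/(c_0\delta)\big)^\nu$ works and depends only on $\nu$, $A$, and the dyadic parameters $c_0,C_0,\delta$.

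The point requiring the most care is the existence of the maximal cubes $Q(x)$: in an unbounded space one would have to guard against ever-larger competing cubes, and here it is precisely the boundedness of $X$ together with the hypothesis $\Omega\neq X$ (equivalently $\langle f\rangle_{1,X}\le\lambda$) that bounds the relevant generation indices from below. A secondary subtlety, flagged in the definition of dyadic systems, is that a single cube may belong to several $\D_k$; this is why one must pass to the minimal generation $k^*$ before speaking of ``the parent'', so as to guarantee the strict inclusion $P\subsetneq\widehat P$ used in the upper bound.
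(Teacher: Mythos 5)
Your proof is correct and follows essentially the same route as the paper: you use that $\D_k=\{X\}$ for small $k$ (so $\langle f\rangle_{1,X}\leq\lambda$ bounds the relevant generations from below), select for each $x\in\Omega$ the maximal cube with average exceeding $\lambda$ (the paper phrases this as the minimal generation index $k_x$), and bound $\langle f\rangle_{1,P}$ by comparing with the parent via property (iii) and \eqref{eq:doublingpropnutwo}, arriving at the same constant $c=C(2AC_0/(c_0\delta))^\nu$. Your explicit handling of the fact that a cube may belong to several generations (passing to the minimal one before taking the parent) is a careful touch that matches the paper's intent.
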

\begin{proof}
Fix $k_0\in\Z$ small enough so that $c_0\delta^{k_0}>\diam X$. Then for any $x\in X$ we have $B(x;c_0\delta^{k_0})=X$. Hence, it follows from property (iii) of dyadic systems that $\D_{k_0}=\{X\}$.

Note that $\Omega\neq X$ implies that $\langle f\rangle_{1,X}\leq\lambda$. Let $x\in\Omega$. Then the set
\[
K_x:=\{k>k_0\mid\text{there is a $Q\in\D_k$,\, $x\in Q$,\, $\langle f\rangle_{1,Q}>\lambda$}\}
\]
is non-empty. Thus, by well-orderedness there is a minimal $k_x\in K_x$, and thus a cube $P_x\in\D_{k_x}$ that contains $x$ so that $\langle f\rangle_{1,P_x}>\lambda$. By minimality of $k_x$, it follows that $\langle f\rangle_{1,p(P_x)}\leq\lambda$, where $p(P_x)\in\D_{k_x-1}$ denotes the parent of $P_x$. By \eqref{eq:doublingpropnutwo} and property (iii) of dyadic systems this implies that
\[
\lambda<\langle f\rangle_{1,P_x}\leq c\langle f\rangle_{1,p(P_x)}\leq c\lambda,
\]
with $c=C(2AC_0/(c_0\delta))^\nu$.

It remains to show that the hereby obtained collection $\mathcal{P}=(P_x)_{x\in X}$ is pairwise disjoint. Indeed, assume that $P_1,P_2\in\mathcal{P}$ so that $P_1\cap P_2\neq\emptyset$. We have either $P_1\subseteq P_2$ or $P_2\subseteq P_1$ by property (ii) of dyadic systems. Without loss of generality we assume the first. Pick $x\in X$ so that $P_1=P_x$. Since $x\in P_2$ and $\langle f\rangle_{1,P_2}>\lambda$, minimality of $k_x$ implies that $P_2\in\D_l$ for some $l\geq k_x$. Again by property (ii) of dyadic systems, this implies that $P_2\subseteq P_1$, proving that $P_1=P_2$. The assertion follows.
\end{proof}

Next, we consider the case (I). We define the maximal operator $\M^{\mathscr{B}}$ with respect to the balls $B\subseteq X$ by $\M^{\mathscr{B}} f:=\sup_{B}\langle f\rangle_{1,B}\chi_B$.
\begin{lemma}[Calder\'on-Zygmund Lemma in the case (I)]\label{lem:caldzyg1}
Let $f\in L^1$, $\lambda>0$, and let $\Omega:=\{\M^{\mathscr{B}} f>\lambda\}$. If $\Omega\neq X$, then we can find a pairwise disjoint collection of cubes $\mathscr{P}\subseteq\D^*$ and a constant $c>0$, depending only on the parameters of the dyadic system, the doubling dimension $\nu$, the quasimetric constant $A$, and $\gamma$, so that
\[
\Omega=\bigcup_{P\in\mathscr{P}}P,
\]
and for each $P\in\mathscr{P}$
\[
\langle f\rangle_{1,P}\leq c\lambda.
\]
\end{lemma}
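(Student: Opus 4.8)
The plan is to construct $\mathscr{P}$ by a Whitney-type selection inside the fixed grid $\D^*$, running parallel to the stopping-time argument in the proof of Lemma \ref{lem:caldzyg2}, but compensating for the absence of a largest cube (this is where the unboundedness of $X$, i.e.\ property (I), enters) by letting the size of each selected cube be governed by its distance to $\Omega^c$.

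First I would note that $\Omega=\{\M^{\mathscr{B}}f>\lambda\}$ is open, so for $x\in\Omega$ the quantity $r_x:=d(x,\Omega^c)$ is strictly positive, and it is finite since $d$ takes finite values and $\Omega^c\neq\emptyset$ by hypothesis; moreover $B(x;r_x)\subseteq\Omega$, because $d(x,y)<r_x$ forces $y\notin\Omega^c$. For each $x\in\Omega$ I let $Q_x\in\D^*$ be the cube through $x$ of the smallest generation $k(x)$ for which $\diam Q_x\le\frac{1}{2A}r_x$; such a generation exists since a cube $Q\in\D_k$ satisfies $\diam Q\le 2AC_0\delta^k$ by property (iii) and $\delta^k\to 0$ as $k\to\infty$. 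Minimality then forces $\delta^{k(x)}\simeq r_x$ (with constants depending on $A,C_0,\delta$), and $x\in Q_x$ together with $\diam Q_x<r_x$ gives $Q_x\subseteq B(x;r_x)\subseteq\Omega$. I would take $\mathscr{P}$ to be the collection of maximal elements of $\{Q_x:x\in\Omega\}$ with respect to inclusion; being dyadic cubes, these are automatically pairwise disjoint.

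The heart of the matter, and the step I expect to be the main obstacle, is to show $\mathscr{P}$ still covers $\Omega$, i.e.\ that every $Q_x$ sits under a maximal element. If this failed there would be an $x_0\in\Omega$ and an infinite strictly increasing chain $Q_{x_0}\subsetneq Q_{x_1}\subsetneq\cdots$ drawn from the family; property (ii) then forces the generations to strictly decrease, so $k(x_j)\to-\infty$ and hence $r_{x_j}\simeq\delta^{k(x_j)}\to\infty$. But $x_0\in Q_{x_j}$ and $\diam Q_{x_j}\le\frac{1}{2A}r_{x_j}$ give $d(x_0,x_j)\le\frac{1}{2A}r_{x_j}$, so for any fixed $w\in\Omega^c$ the quasi-triangle inequality yields $r_{x_j}\le d(x_j,w)\le A\bigl(d(x_0,x_j)+d(x_0,w)\bigr)\le\frac{1}{2}r_{x_j}+Ad(x_0,w)$, whence $r_{x_j}\le 2Ad(x_0,w)<\infty$, contradicting $r_{x_j}\to\infty$. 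Thus maximal elements exist and $\Omega=\bigcup_{P\in\mathscr{P}}P$; this is precisely the place where the top cube $X\in\D_{k_0}$ of Lemma \ref{lem:caldzyg2} is replaced by the geometry of the dyadic cubes together with property (I).

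It remains to establish the average bound, which I would obtain together with the auxiliary Whitney ball used elsewhere. Fixing $P=Q_x\in\mathscr{P}$ of generation $k=k(x)$ and centre $z$, I set $B(P):=B(x;2r_x)$. Then $P\subseteq B(x;r_x)\subseteq B(P)$; $B(P)\cap\Omega^c\neq\emptyset$ since $d(x,\Omega^c)=r_x<2r_x$; and since $d(x,z)\le C_0\delta^k\lesssim r_x$ while $\delta^k\simeq r_x$, the doubling estimate \eqref{eq:doublingpropnutwo} applied with centre $z$ gives $|B(P)|\lesssim|B(z;c_0\delta^k)|\le|P|$, with a constant $c$ depending only on $\nu,A,c_0,C_0,\delta$ (and $\gamma$). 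Picking $w\in B(P)\cap\Omega^c$, the ball $B(P)$ is admissible for $\M^{\mathscr{B}}f$ at $w$, so $\langle f\rangle_{1,B(P)}\le\M^{\mathscr{B}}f(w)\le\lambda$, and therefore
\[
\langle f\rangle_{1,P}=\frac{1}{|P|}\int_P|f|\,\mathrm{d}\mu\le\frac{|B(P)|}{|P|}\,\langle f\rangle_{1,B(P)}\le c\lambda,
\]
which is the claimed estimate; the same construction simultaneously produces, for each $P\in\mathscr{P}$, a ball $B(P)\supseteq P$ with $B(P)\cap\Omega^c\neq\emptyset$ and $|B(P)|\lesssim|P|$, as needed in the proofs of Theorems \ref{thm:main2} and \ref{thm:maindual}. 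Everything outside the covering step is a routine application of the doubling property and properties (ii)--(iii) of the dyadic system.
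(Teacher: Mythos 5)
Your argument is correct, and it is in essence the paper's argument: a Whitney-type selection inside the fixed dyadic grid $\D^*$, keyed to the distance to $\Omega^c$, with property (I) controlling how large the selected cubes can get and a doubling comparison with a nearby ball meeting $\Omega^c$ giving the bound $\langle f\rangle_{1,P}\leq c\lambda$. The only structural difference is packaging: the paper first proves a standalone dyadic Whitney Decomposition Theorem (Theorem \ref{thm:whitney}) by stopping at cubes $Q\subseteq\Omega$ with $\diam Q\leq d(Q,\Omega^c)$ whose parent violates this condition, and then deduces the lemma; you instead select, for each $x\in\Omega$, the largest ancestor cube with $\diam Q_x\leq\frac{1}{2A}d(x,\Omega^c)$ and pass to maximal elements, which yields the same two-sided Whitney behaviour (and, as you note, the balls $B(P)$ with $B(P)\cap\Omega^c\neq\emptyset$ and $|B(P)|\lesssim|P|$ that are reused in the proofs of Theorems \ref{thm:main2} and \ref{thm:maindual}). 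One small point you should state explicitly: the minimal admissible generation $k(x)$ exists because the set of admissible generations is not only nonempty (diameters tend to $0$ as $k\to\infty$) but also bounded below, which is exactly where \eqref{eq:gammaq}, i.e.\ property (I), enters via $\gamma c_0\delta^k\leq\diam Q^k_x\to\infty$ as $k\to-\infty$; this is the analogue of the paper's claim that $K_x$ is bounded from below, and it is the same inequality you already use to get $\delta^{k(x)}\simeq r_x$ in the chain argument, so no new idea is needed.
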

For the proof we use a version of the Whitney Decomposition Theorem. Note that the diameter assumption \eqref{eq:gamma} together with property (iii) of dyadic systems implies that for any $Q\in\D_k$ we have
\begin{equation}\label{eq:gammaq}
\gamma c_0\delta^k\leq\diam Q\leq 2AC_0\delta^k.
\end{equation}
\begin{theorem}[Whitney Decomposition Theorem for Dyadic Cubes]\label{thm:whitney}
Let $\Omega\subsetneq X$ be open. Then there exists a pairwise disjoint collection of cubes $\mathscr{P}\subseteq\D^*$ such that
\[
\Omega=\bigcup_{P\in\mathscr{P}}P
\]
and for each $P\in\mathscr{P}$,
\[
\diam P\leq d(P,\Omega^c)\leq\frac{4A^2C_0}{\gamma c_0\delta}\diam P.
\]
\end{theorem}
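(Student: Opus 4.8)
The plan is to run the classical Whitney selection argument adapted to the dyadic structure of $\D^*$, using the two-sided diameter bound \eqref{eq:gammaq} that is available under property (I) (which is implicitly assumed here, since the asserted constant involves $\gamma$). For $x\in\Omega$ and $k\in\Z$ let $Q^k(x)$ denote the unique cube of $\D_k$ containing $x$; by property (ii) of dyadic systems the cubes $Q^k(x)$ are nested and non-increasing in $k$. I would first show that
\[
\mathcal{K}(x):=\bigl\{k\in\Z:\diam Q^k(x)\le d(Q^k(x),\Omega^c)\bigr\}
\]
is non-empty and bounded below. For non-emptiness, $\diam Q^k(x)\le 2AC_0\delta^k\to0$ as $k\to+\infty$, while the quasi-triangle inequality gives $d(Q^k(x),\Omega^c)\ge A^{-1}d(x,\Omega^c)-\diam Q^k(x)$ with $d(x,\Omega^c)>0$ since $\Omega$ is open, so every sufficiently large $k$ lies in $\mathcal{K}(x)$. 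For boundedness below, $\diam Q^k(x)\ge\gamma c_0\delta^k\to\infty$ as $k\to-\infty$ whereas $d(Q^k(x),\Omega^c)\le d(x,\Omega^c)<\infty$ because $\Omega^c\ne\emptyset$. Set $k_0(x):=\min\mathcal{K}(x)$ and $Q_x:=Q^{k_0(x)}(x)$. Then $\diam Q_x\le d(Q_x,\Omega^c)$ by construction, and since $k_0(x)-1\notin\mathcal{K}(x)$ the parent $\widehat{Q}_x:=Q^{k_0(x)-1}(x)$ satisfies $d(\widehat{Q}_x,\Omega^c)<\diam\widehat{Q}_x$.

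Next I would extract the upper estimate. Since $Q_x\subseteq\widehat{Q}_x$, for $a\in Q_x$ and $b\in\Omega^c$ nearly realizing $d(\widehat{Q}_x,\Omega^c)$ the quasi-triangle inequality gives $d(a,b)\le A\bigl(\diam\widehat{Q}_x+d(\widehat{Q}_x,\Omega^c)\bigr)<2A\diam\widehat{Q}_x$, hence $d(Q_x,\Omega^c)\le 2A\diam\widehat{Q}_x$. Applying the two halves of \eqref{eq:gammaq} at levels $k_0(x)-1$ and $k_0(x)$ yields $\diam\widehat{Q}_x\le 2AC_0\delta^{k_0(x)-1}\le\frac{2AC_0}{\gamma c_0\delta}\diam Q_x$, so that $\diam Q_x\le d(Q_x,\Omega^c)\le\frac{4A^2C_0}{\gamma c_0\delta}\diam Q_x$. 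Moreover $d(Q_x,\Omega^c)\ge\diam Q_x>0$ forces $Q_x\cap\Omega^c=\emptyset$, i.e. $Q_x\subseteq\Omega$.

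Finally, the family $\{Q_x:x\in\Omega\}$ covers $\Omega$ with $\bigcup_{x\in\Omega}Q_x=\Omega$, and I would take $\mathscr{P}$ to consist of those $Q_x$ that are maximal with respect to inclusion. Two dyadic cubes are nested or disjoint, so the maximal ones are automatically pairwise disjoint, and every $Q_x$ is contained in a maximal cube because any selected cube $Q_y\supseteq Q_x$ must equal $Q^{k_0(y)}(x)$ with $k_0(y)\in\mathcal{K}(x)\cap(-\infty,k_0(x)]$, a finite set of levels, so the chain of selected cubes above $Q_x$ is finite. Thus $\bigcup_{P\in\mathscr{P}}P=\Omega$ and each $P\in\mathscr{P}$ inherits the two-sided estimate from being some $Q_x$. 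The only delicate point — the main obstacle — is this last bookkeeping step: one must ensure that the increasing chains of selected cubes terminate so that maximal cubes exist and the subcollection still covers $\Omega$, and this is precisely where the lower bound $\diam Q\ge\gamma c_0\delta^k$, i.e. property (I), is indispensable; without it a cube could sit inside an infinite increasing chain and the selection would break down.
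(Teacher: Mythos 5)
Your proof is correct and follows essentially the same route as the paper: a dyadic stopping-time selection of the coarsest cube through each point of $\Omega$ satisfying $\diam Q\le d(Q,\Omega^c)$, with the lower bound in \eqref{eq:gammaq} (property (I)) guaranteeing that the stopping level exists and giving the parent comparison that yields the constant $\frac{4A^2C_0}{\gamma c_0\delta}$. Your disjointness step via maximal cubes is just a repackaging of the paper's argument that a selected cube's parent fails the Whitney condition (indeed, by minimality of $k_0(x)$ your chains are singletons), so no essential difference.
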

\begin{proof}
We define
\[
\mathscr{E}:=\{Q\in\D^*\mid Q\subseteq\Omega,\,\diam Q\leq d(Q,\Omega^c)\}.
\]
Moreover we set
\[
\mathscr{P}:=\{Q\in\mathscr{E}\mid\text{there is a $k\in\Z$ so that }Q\in\D_k,\,p(Q)\notin\mathscr{E}\},
\]
where $p(Q)\in\D_{k-1}$ denotes the parent of $Q\in\D_k$. We will show that
\[
\bigcup_{P\in\mathscr{P}}P=\Omega.
\]
Indeed, any $P\in\mathscr{P}$ is contained in $\Omega$. Conversely, if $x\in\Omega$, Let $(Q^k_x)_{k\in\Z}$ be the sequence of cubes in $\D^*$ with $x\in Q^k_x$ and $Q^k_x\in\D_k$ for all $k\in\Z$. Since $\Omega$ is open, there is a ball $B=B(x;r)$ contained in $\Omega$. Picking $k_0$ large enough so that $2AC_0\delta^{k_0}<r$, we find that
\[
Q^k_x\subseteq B(x;r)\subseteq\Omega
\]
for all $k\geq k_0$ by \eqref{eq:gammaq}. Moreover, since $d(Q^k_x,\Omega^c)\geq A^{-1}(d(x,\Omega^c)-2A^2C_0\delta^k)\uparrow A^{-1}d(x,\Omega^c)$ as $k\to\infty$, while $\diam(Q^k_x)\leq 2AC_0\delta^k\downarrow 0$ as $k\to\infty$, we can find a $k_1\in\Z$ so that $\diam(Q^k_x)\leq d(Q^k_x,\Omega^c)$ whenever $k\geq k_1$. Hence, for all $k\geq\max(k_0,k_1)$ we have $Q^k_x\in\mathscr{E}$. Thus, the set
\[
K_x:=\{k\in\Z\mid Q^k_x\in\mathscr{E}\}
\]
is non-empty. We also claim that $K_x$ is bounded from below. Indeed, if we choose $k_2\in\Z$ small enough so that $\gamma c_0\delta^{k_2}>d(x,\Omega^c)$, then
\[
d(Q^k_x,\Omega^c)\leq d(x,\Omega^c)<\diam(Q^k_x)
\]
for all $k\leq k_2$ by \eqref{eq:gammaq}, and hence $Q^k_x\notin\mathscr{E}$ for $k\leq k_2$, proving the claim.

We set $k_x:=\min K_x\in\Z$. Then $Q^{k_x}_x\in\mathscr{E}$ while $p(Q^{k_x}_x)=Q^{k_x-1}_x\notin\mathscr{E}$. Hence, $Q^{k_x}_x\in\mathscr{P}$, proving that $x\in\cup_{P\in\mathscr{P}}P$, as desired.

Next we will show that $\mathscr{P}$ is pairwise disjoint. Suppose for a contradiction that we have $P_1,P_2\in\mathscr{P}$ so that $P_1\cap P_2\neq\emptyset$ and $P_1\neq P_2$. Let $l_1,l_2\in\Z$ so that $P_1\in\D_{l_1}$, $P_2\in\D_{l_2}$ and $p(P_1),p(P_2)\notin\mathscr{E}$. Without loss of generality we assume that $l_1>l_2$ and thus $P_1\subseteq P_2$ by property (ii) of the dyadic systems. Then also $p(P_1)\subseteq P_2$. Since $p(P_1)\notin\mathscr{E}$, we must have that either $p(P_1)\nsubseteq\Omega$ or $d(p(P_1),\Omega^c)<d(p(P_1))$. The first case implies that $P_2\nsubseteq\Omega$, contradicting the fact that $P_2\in\mathscr{E}$. The second case implies that
\[
\diam(P_2)\geq \diam(p(P_1))> d(p(P_1),\Omega^c)\geq d(P_2,\Omega^c),
\]
again contradicting $P_2\in\mathscr{E}$. We conclude that $\mathscr{P}$ is pairwise disjoint, as desired.

It remains to show that $d(P,\Omega^c)<4A^2C_0/(\gamma c_0\delta)\diam P$ for all $P\in\mathscr{P}$. Let $P\in\mathscr{P}$, $P\in\D_k$ so that $p(P)\notin\mathscr{E}$. Then either $p(P)\nsubseteq\Omega$ or $d(p(P),\Omega^c)<\diam(p(P))$. In the first case we have $d(p(Q),\Omega^c)=0$, so in both cases we have
\[
d(p(P),\Omega^c)<\diam(p(P))\leq 2AC_0\delta^{k-1}=\frac{2AC_0}{\gamma c_0\delta}\gamma c_0\delta^k\leq\frac{2AC_0}{\gamma c_0\delta}\diam P.
\]
by \eqref{eq:gammaq}. Hence,
\[
d(P,\Omega^c)\leq A(d(p(P),\Omega^c)+\diam(p(P)))<\frac{4A^2C_0}{\gamma c_0\delta}\diam P,
\]
as desired.
\end{proof}
\begin{proof}[Proof of Lemma \ref{lem:caldzyg1}]
We apply the Whitney Decomposition Theorem to write $\Omega=\cup_{P\in\mathscr{P}}P$.

If $P\in\mathscr{P}$, $P\in\D_k$ with center $z_P$, we have
\begin{align*}
2d(z_P,\Omega^c)&\leq 2A\diam P+2Ad(P,\Omega^c)\leq\left(2A+\frac{8A^3C_0}{\gamma c_0\delta}\right)\diam P\\
&\leq\left(4A+\frac{16A^3C_0}{\gamma c_0\delta}\right)C_0\delta^k=:\tau C_0\delta^k
\end{align*}
so that
\[
\emptyset\neq B(z_P;2d(z_p,\Omega^c))\cap\Omega^c\subseteq B(z_P;\tau C_0\delta^k)\cap\Omega^c.
\]
Since
\[
\left|B(z_P;\tau C_0\delta^k)\right|\leq C\left(\frac{\tau C_0}{c_0}\right)^\nu|B(z_P;c_0\delta^k)|\lesssim|P|
\]
by \eqref{eq:doublingpropnu}, we may pick a point $x\in B(z_p;\tau C_0\delta^k)\cap\Omega^c$ to conclude that
\[
\langle f\rangle_{1,P}\lesssim\langle f\rangle_{1,B(z_p;\tau C_0\delta^k)}\leq\M^{\mathscr{B}}f(x)\leq\lambda.
\]
The assertion follows.
\end{proof}

\begin{proof}[Proof of Theorem \ref{thm:thingswork}]
In both cases (I) and (II), the proof of Theorem \ref{thm:main1} holds mutatis mutandis. Moreover, in the case (I), the same is true for Theorem \ref{thm:main2}, where one uses Lemma \ref{lem:caldzyg1}, and for Theorem \ref{thm:maindual}, where one uses Theorem \ref{thm:whitney}.

For Theorem \ref{thm:main2} in the case (II), one replaces the set $\Omega$ in the proof by the set $\Omega=\{\M(|f|^{p_0})> 2[w]_{A_1} w(E)^{-1}\}$. We claim that $\Omega\neq X$. Indeed, since $X$ is bounded, we have $w(X)<\infty$. Thus, by \eqref{eq:classfefstein}, we have
\[
w(\Omega)\leq\frac{w(E)}{2[w]_{A_1}}\int\!|f|^{p_0}\M w\,\mathrm{d}\mu\leq\frac{w(E)}{2}\leq\frac{w(X)}{2}<w(X),
\]
proving the claim. Thus we may apply Lemma \ref{lem:caldzyg2} to decompose $\Omega$, and the remainder of the proof runs analogously.
\end{proof}

\section{Optimality of weighted strong type estimates}
In this section we are going to show that the weighted strong type estimates in \eqref{eq:atwo} and \eqref{eq:bestimate} are optimal, given a certain asymptotic behaviour  of the unweighted $L^p$ operator norm of $T$. Such asymptotic behaviour is directly linked to lower bounds on the (generalized) kernel of the operator, see Example \ref{ex:twocopies}. We improve upon the result in \cite{frey16}, where it was shown that the estimate \eqref{eq:atwo} is optimal for sparse forms. Indeed, here we are directly using properties of the operator $T$ itself rather than only its sparse bounds.

Our method is an adaptation of the results of Fefferman, Pipher \cite{fefpip97} and Luque, P\'erez and Rela \cite{luque15}. We deduce sharpness of weighted bounds from the asymptotic behaviour of the unweighted $L^p$ norm of $T$ as $p$ tends to $p_0$ and $q_0$, respectively. The proof exploits the known sharp behaviour of the Hardy-Littlewood maximal function via the iteration algorithm of Rubio de Francia.

We will work in a doubling metric measure space $(X,d,\mu)$ satisfying the assumptions from the Section \ref{sec:ext}. As a matter of fact, the only property we need is a precise control of the $L^p$ norm of the maximal operator. More precisely, we let $\D:=\cup_{\alpha=1}^K\D^\alpha$ be the union of the dyadic grids in $X$ obtained from Theorem \ref{thm:dyadicgrids}. Then we define
\[
\M_qf:=\max_{1\leq\alpha\leq K}\sup_{Q\in\D^\alpha}\langle f\rangle_{q,Q}\chi_Q=\sup_{Q\in\D}\langle f\rangle_{q,Q}\chi_Q
\]
for $1\leq q<\infty$, where we set $\M:=\M_1$. Using the shorthand notation $\|\M_q\|_p=\|\M_q\|_{L^p\to L^p}$ for $p>q$, we will use
\begin{equation}\label{eq:maxfinsec}
\|\M\|_p\leq Kp',
\end{equation}
which follows as in \eqref{eq:fefstein} with $w=1$.

Let us first define the critical exponents that determine the asymptotic behaviour of the unweighted $L^p$ operator norm of $T$.

\begin{definition} \label{def:exponents}
Let $1 \leq p_0 < q_0 \leq \infty$.
Let  $T$ be a bounded operator on $L^p$ for all  $p_0<p<q_0$.  We define
\[
	\alpha_T(p_0) := \sup \{ \alpha \geq 0 \,\mid\, \forall \eps > 0, \, \limsup_{p \to p_0} \, (p-p_0)^{\alpha-\epsilon} \|T\|_{L^p\to L^p} = \infty\}.
\]
For $q_0<\infty$ we define
\[
	\gamma_T(q_0) := \sup \{ \gamma \geq 0 \,\mid\, \forall \eps >0, \, \limsup_{p \to q_0} \,(q_0-p)^{\gamma-\eps}\|T\|_{L^p\to L^p}  = \infty\},
\]
and for $q_0=\infty$
\[
	\gamma_T(\infty) := \sup \{ \gamma \geq 0 \,\mid\, \forall \eps >0, \, \limsup_{p \to \infty} \, \frac{\|T\|_{L^p\to L^p} }{ p^{\gamma-\eps}} = \infty\}.
\]
\end{definition}

For $p_0<s<q_0$ we define
\[
\phi(s):=\left(\frac{q_0}{s}\right)'\left(\frac{s}{p_0}-1\right)+1.
\]
Then it follows from Proposition \ref{prop:weightprop}(ii) that for a weight $w$ we have $w\in A_{s/p_0}\cap\RH_{(q_0/s)'}$ if and only if $w^{(q_0/s)'}\in A_{\phi(s)}$.

 We establish the following connection between the weighted strong type estimates for $T$ and the asymptotic behaviour of the unweighted $L^p$ operator norm  at the endpoints $p=p_0$ and $p=q_0$.

\begin{theorem} \label{thm:ap-opt}
Let $T$ be a bounded operator on $L^p$ for all $p_0<p<q_0$. Suppose that for some $p_0<s<q_0$ and for all $w\in A_{s/p_0}\cap\RH_{(q_0/s)'}$,
\begin{equation} \label{eq:eq-ap}
	\|T\|_{L^s(w) \to L^s(w)} \leq c [w^{(q_0/s)'}]_{A_{\phi(s)}}^{\beta/(q_0/s)'}.
\end{equation}
Then
\[
\beta\geq\max\left(\frac{p_0}{s-p_0}\alpha_T(p_0),\left(\frac{q_0}{s}\right)'\gamma_T(q_0) \right).
\]
\end{theorem}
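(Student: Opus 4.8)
The plan is to exploit the sharp quantitative behaviour of the dyadic maximal operators, encoded in \eqref{eq:maxfinsec}, via the Rubio de Francia iteration scheme, and to build $A_1$ weights (and hence, by Proposition \ref{prop:weightprop}(ii), weights in $A_{s/p_0}\cap\RH_{(q_0/s)'}$) whose characteristic constants can be made explicit and for which the weighted bound \eqref{eq:eq-ap} forces a lower bound on $\beta$. I would treat the two terms in the maximum separately.

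First, for the $\gamma_T(q_0)$ term: fix $p_0<s<q_0$ and let $p$ run through values slightly above $s$ approaching $q_0$ from below. For a nonnegative $h\in L^{(p/s)'}$ one defines the Rubio de Francia iterate $Rh:=\sum_{k\ge 0}(2\|\M\|_{(p/s)'})^{-k}\M^k h$; then $Rh\in A_1$ with $[Rh]_{A_1}\lesssim\|\M\|_{(p/s)'}\lesssim (p/s)'\simeq q_0/(q_0-p)$ up to constants depending on $K$, and $h\le Rh$ pointwise with $\|Rh\|_{(p/s)'}\le 2\|h\|_{(p/s)'}$. Applying \eqref{eq:eq-ap} with $w=(Rh)$ (note $(q_0/s)'\ge 1$, so $(Rh)^{(q_0/s)'}$ is still $A_1$, hence in $A_{\phi(s)}$, with controlled constant via Proposition \ref{prop:weightprop}(ii)), and then using the duality/extrapolation trick — estimating $\|Tf\|_{L^p}$ by testing against $h=|Tf|^{s}$-type functions and applying the weighted $L^s$ bound together with Hölder — yields, after bookkeeping the exponents, an inequality of the shape $\|T\|_{L^p\to L^p}\lesssim (q_0-p)^{-(q_0/s)'\beta}$ up to lower-order factors. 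By the definition of $\gamma_T(q_0)$ this forces $(q_0/s)'\beta\ge\gamma_T(q_0)$. For $q_0=\infty$ the same computation runs with $(p/s)'\simeq p$ replacing $q_0/(q_0-p)$, giving $\|T\|_{L^p\to L^p}\lesssim p^{\beta}$ and hence $\beta\ge\gamma_T(\infty)=(q_0/s)'\gamma_T(q_0)$ in the limiting sense.

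Second, for the $\alpha_T(p_0)$ term: one runs the dual version. By duality $\|T\|_{L^p\to L^p}=\|T^\ast\|_{L^{p'}\to L^{p'}}$, and $T^\ast\in S(q_0',p_0')$, so one applies the first part's argument to $T^\ast$ with exponents $(q_0',p_0')$ in place of $(p_0,q_0)$; here $p\to p_0$ corresponds to $p'\to p_0'$, i.e. approaching the \emph{upper} endpoint $p_0'$ of the interval $(q_0',p_0')$ from below. One must check that the weight-class bookkeeping transforms $A_{s/p_0}\cap\RH_{(q_0/s)'}$ into the corresponding dual class for $T^\ast$ at the dual exponent $s'$, which is exactly the content of the $A_p$–$\RH_s$ duality underlying the equivalence $T\in S(p_0,q_0)\iff T^\ast\in S(q_0',p_0')$ and of Proposition \ref{prop:weightprop}(ii). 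Tracking the exponents through this substitution converts $(q_0/s)'\gamma_{T^\ast}(p_0')$ into $\frac{p_0}{s-p_0}\alpha_T(p_0)$, and combining with the first part gives the stated maximum.

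I expect the main obstacle to be the exponent bookkeeping: relating the weighted $L^s$ bound \eqref{eq:eq-ap} (with the awkward normalization $\beta/(q_0/s)'$ and the weight $w^{(q_0/s)'}$) to an unweighted $L^p$ bound near the endpoint, while keeping careful track of how $[w^{(q_0/s)'}]_{A_{\phi(s)}}$ grows in terms of $[Rh]_{A_1}\sim\|\M\|_{(p/s)'}$, and ensuring the auxiliary factors (the $c_p$-type constants, the $\|h\|$-norms, the constants from Proposition \ref{prop:weightprop}(ii), and the interpolation losses) are genuinely of lower order than the leading power of $(q_0-p)^{-1}$ or $p$. Once the iteration scheme is set up correctly this is routine, but it is the step where sign errors and off-by-$\varepsilon$ mistakes in the $\limsup$ definitions of $\alpha_T,\gamma_T$ are easiest to make, so I would write it out with the test function $h=|Tf|^s$, the pointwise domination $|Tf|^s\le (|Tf|^s)\le R(|Tf|^s)$, and the chain $\|Tf\|_{L^p}^s=\langle |Tf|^s,h\rangle\le \|Tf\|_{L^s(R h)}^s\le \|T\|_{L^s(Rh)\to L^s(Rh)}^s\|f\|_{L^s(Rh)}^s\lesssim \|T\|^s\,\|f\|_{L^p}^s\|Rh\|_{(p/s)'}$ spelled out explicitly before inserting the quantitative bound on $\|T\|_{L^s(Rh)}$.
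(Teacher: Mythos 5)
Your overall strategy for the $\gamma_T(q_0)$ half (Rubio de Francia iteration plus the sharp maximal bound \eqref{eq:maxfinsec}, testing $\||Tf|^s\|_{p/s}$ against a dual function) is the same as the paper's, but the central quantitative step is wrong as written. It is not true that $(\calR h)^{(q_0/s)'}$ is in $A_1$ merely because $(q_0/s)'\geq 1$: powers greater than $1$ of an $A_1$ weight need in general belong neither to $A_1$ nor to $\RH_{(q_0/s)'}$ (consider $|x|^{-\alpha}$ with $\alpha$ close to the doubling dimension), so membership of $(\calR h)^{(q_0/s)'}$ in $A_{\phi(s)}$ with controlled constant is unjustified — and this is precisely the crux of the proof. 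The paper's device is to run the iteration with $\M_{(q_0/s)'}$ instead of $\M$, so that the $(q_0/s)'$-th power of $\calR h$ is in $A_1$ by construction, with $[(\calR h)^{(q_0/s)'}]_{A_1}\lesssim\|\M\|_{(p/s)'/(q_0/s)'}\lesssim\frac{p(q_0-s)}{s(q_0-p)}$. Your constant tracking also fails in a way that matters: with plain $\M$-iterates one has $\|\M\|_{(p/s)'}\lesssim p/s$ and $(p/s)'\to(q_0/s)'$, both of which stay bounded as $p\to q_0$ (your "$(p/s)'\simeq q_0/(q_0-p)$" is incorrect), so your construction would produce weights with uniformly bounded characteristics, hence no blow-up and no information on $\beta$; the rate $(q_0-p)^{-1}$ enters only through $\|\M\|_{(p/s)'/(q_0/s)'}$. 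Finally, the inequality you aim for, $\|T\|_{L^p\to L^p}\lesssim(q_0-p)^{-(q_0/s)'\beta}$, would only give $\beta\geq\gamma_T(q_0)/(q_0/s)'$; what is needed (and what the paper proves) is $\|T\|_{L^p\to L^p}\lesssim(q_0-p)^{-\beta/(q_0/s)'}$, which yields $\beta\geq(q_0/s)'\gamma_T(q_0)$.

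For the $\alpha_T(p_0)$ half, your reduction by duality is not available under the stated hypotheses: the theorem assumes only that $T$ is a bounded (possibly sublinear) operator satisfying \eqref{eq:eq-ap}, and the paper applies it to sparsely dominated operators that need not be linear, so $T^\ast$ need not exist. Even for linear $T$, the transference of \eqref{eq:eq-ap} to $T^\ast$ at the dual exponent $s'$ with the correctly transformed constant is exactly the nontrivial bookkeeping you defer (one needs $\phi(s)'=\phi^\ast(s')$, the identity $w^{(q_0/s)'(1-\phi(s)')}=(w^{1-s'})^{(p_0'/s')'}$ and $[w]_{A_q}=[w^{1-q'}]_{A_{q'}}^{q-1}$), and when $p_0=1$ the dual endpoint is $p_0'=\infty$, so you would also need the $q_0=\infty$ version of the first half. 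The paper avoids all of this with a direct argument: for $p_0<p<s$ it builds $\calR|f|$ from $\M_{p_0}$-iterates, applies \eqref{eq:eq-ap} with the weight $w=(\calR|f|)^{-(s-p)}$ (a \emph{negative} power of the constructed weight), controls $[w^{(q_0/s)'}]_{A_{\phi(s)}}$ via $[w]_{A_q}=[w^{1-q'}]_{A_{q'}}^{q-1}$ and Jensen's inequality, and obtains $\|T\|_{L^p\to L^p}\lesssim(p-p_0)^{-\beta(s-p)/p_0}$, hence $\beta\geq\frac{p_0}{s-p_0}\alpha_T(p_0)$. As it stands, both halves of your proposal therefore have gaps that coincide with the actual substance of the proof.
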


We also establish a version involving the $A_1$ characteristics. Its proof follows the same lines as the one for Theorem \ref{thm:ap-opt} and will therefore be omitted.

\begin{theorem} \label{thm:a1-opt}
Let $T$ be a bounded operator on $L^p$ for all $p_0<p<q_0$. Suppose that for some $p_0<s<q_0$ and for all $w\in A_1\cap\RH_{(q_0/s)'}$,
\begin{equation} \label{eq:eq-a1}
\|T\|_{L^s(w) \to L^s(w)} \leq c [w^{(q_0/s)'}]_{A_1}^{\beta/(q_0/s)'}.
\end{equation}
Then
\[
\beta\geq\left(\frac{q_0}{s}\right)'\gamma_T(q_0).
\]
\end{theorem}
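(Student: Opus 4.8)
The plan is to run the Rubio de Francia self-improvement scheme used for Theorem~\ref{thm:ap-opt}, with the $A_p$ machinery replaced by its $A_1\cap\RH_r$ analogue, where throughout we abbreviate $r:=(q_0/s)'$. The idea is that the single weighted bound \eqref{eq:eq-a1} at the fixed exponent $s$ can be ``extrapolated downward'' into an \emph{unweighted} estimate for $\|T\|_{L^p\to L^p}$ with $s<p<q_0$, whose blow-up rate as $p\uparrow q_0$ is governed by $\beta/r$; comparing with Definition~\ref{def:exponents} then forces $\beta\ge r\gamma_T(q_0)$.

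Fix $p\in(s,q_0)$ and set $q:=p/s>1$. By duality, $\|Tf\|_p^s=\||Tf|^s\|_{L^q}=\sup\int|Tf|^s u$, the supremum taken over $0\le u$ with $\|u\|_{L^{q'}}=1$. Since $t\mapsto t/(t-s)$ is decreasing on $(s,\infty)$ and $p<q_0$, one checks $q'/r>1$; so $\M$ is bounded on $L^{q'/r}$ with $\|\M\|_{L^{q'/r}\to L^{q'/r}}\le K(q'/r)'$ by \eqref{eq:maxfinsec}. Given $u$, set $v:=u^r\in L^{q'/r}$ (so $\|v\|_{L^{q'/r}}=1$) and form the Rubio de Francia iterate $Rv:=\sum_{k\ge0}(2\|\M\|_{L^{q'/r}\to L^{q'/r}})^{-k}\M^k v$, which converges a.e., dominates $v$, satisfies $\|Rv\|_{L^{q'/r}}\le 2$, and obeys $\M(Rv)\le 2\|\M\|_{L^{q'/r}\to L^{q'/r}}Rv$, i.e.\ $Rv\in A_1$ with $[Rv]_{A_1}\le 2K(q'/r)'$. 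Setting $w:=(Rv)^{1/r}$, we obtain $w\ge u$ pointwise; moreover $w^r=Rv\in A_1$, so $w\in A_1\cap\RH_r$ by Proposition~\ref{prop:weightprop}(ii), with $[w^r]_{A_1}\le 2K(q'/r)'$ and $\|w\|_{L^{q'}}=\|Rv\|_{L^{q'/r}}^{1/r}\le 2^{1/r}$.

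Feeding $w$ into \eqref{eq:eq-a1} and applying H\"older's inequality $\int|f|^s w\le\|f\|_p^s\|w\|_{L^{q'}}\le 2^{1/r}\|f\|_p^s$ gives, uniformly in $u$,
\[
\int|Tf|^s u\le\int|Tf|^s w=\|Tf\|_{L^s(w)}^s\le c^s[w^r]_{A_1}^{s\beta/r}\!\int|f|^s w\lesssim (q'/r)'^{s\beta/r}\|f\|_p^s,
\]
so that $\|T\|_{L^p\to L^p}\lesssim (q'/r)'^{\beta/r}$ with implied constant depending only on $c$, $K$, $\beta$, $r$, $s$. A short computation yields $(q'/r)'=\frac{p(q_0-s)}{s(q_0-p)}$, which is $\simeq(q_0-p)^{-1}$ as $p\uparrow q_0$; hence $\|T\|_{L^p\to L^p}\lesssim(q_0-p)^{-\beta/r}$ near $q_0$ (and when $q_0=\infty$ one has $r=1$, $w=Ru$, and the identical chain gives $\|T\|_{L^p\to L^p}\lesssim p^{\beta}$ as $p\to\infty$). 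Inserting this into Definition~\ref{def:exponents} shows $\gamma_T(q_0)\le\beta/r$: indeed, if $\gamma>\beta/r$ then for any $\eps<\gamma-\beta/r$ we have $(q_0-p)^{\gamma-\eps}\|T\|_{L^p\to L^p}\lesssim(q_0-p)^{\gamma-\eps-\beta/r}\to 0$ (respectively $p^{-(\gamma-\eps)}\|T\|_{L^p\to L^p}\to 0$ when $q_0=\infty$), so $\gamma$ lies outside the supremum defining $\gamma_T(q_0)$. Therefore $\beta\ge r\gamma_T(q_0)=(q_0/s)'\gamma_T(q_0)$.

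The one genuinely delicate point — exactly as in the proof of Theorem~\ref{thm:ap-opt} — is the \emph{quantitative} weight construction: from an arbitrary dual function $u$ one must extract $w\ge u$ lying in $A_1\cap\RH_r$ (not merely in $A_1$), with $[w^r]_{A_1}$ controlled \emph{sharply} by the $L^{q'/r}$ operator norm of $\M$ and with $\|w\|_{L^{q'}}\lesssim 1$; any slack here would enlarge the exponent near $p=q_0$ and destroy the optimality. The remaining steps (the density reduction needed to apply \eqref{eq:eq-a1} to general $f\in L^p$, and the endpoint arithmetic) are routine.
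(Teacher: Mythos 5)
Your proof is correct and takes essentially the same route as the paper's (which omits the proof of this theorem, deferring to the second half of the proof of Theorem \ref{thm:ap-opt}): dualize $\||Tf|^s\|_{p/s}$, construct an $A_1\cap\RH_{(q_0/s)'}$ majorant of the dual function via Rubio de Francia iteration with $A_1$ constant controlled by $\|\M\|_{(p/s)'/(q_0/s)'}$, apply the weighted hypothesis at the fixed exponent $s$, and compare the resulting blow-up $(q_0-p)^{-\beta/(q_0/s)'}$ with the definition of $\gamma_T(q_0)$. The only immaterial difference is that you iterate $\M$ on $u^{(q_0/s)'}$ and take an $r$-th root, whereas the paper iterates $\M_{(q_0/s)'}$ on the dual function directly; both yield the same quantitative weight properties.
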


\begin{proof}[Proof of Theorem \ref{thm:ap-opt}]
We adapt the proof of  \cite{luque15}, which is based on the iteration algorithm of Rubio de Francia. Let $p_0<p<s$, and define the operator $\calR$ by
\[
		\calR h = \sum_{k=0}^\infty \frac{1}{2^k} \frac{\M_{p_0}^k h}{\|\M_{p_0}\|_p^k}
\]
for $h \in L^p$ with $h\geq 0$.
We claim that then
\begin{enumerate}
\item[(A)] $h\leq \calR h$,
\item[(B)] $\|\calR h\|_p \leq 2 \|h\|_p$,
\item[(C)] $[(\calR h)^{p_0}]_{A_1} \leq 2^{p_0} \|\M\|_{p/p_0}$.
\end{enumerate}
Properties (A) and (B) are immediate. Note that (B) uses the assumption $p_0<p$. Property (C) can be seen as follows:
By definition of $\calR$,  we have
\[
\M_{p_0}(\calR h) \leq 2 \|\M_{p_0}\|_{p} \calR h.
\]
Thus, using that $\|\M_{p_0}\|_{p} = \|\M\|_{p/p_0}^{1/p_0}$, we obtain for $v=\calR h$
\[
		\M(v^{p_0}) = (\M_{p_0}v)^{p_0}
		\leq (2\|\M_{p_0}\|_{p} v)^{p_0}
		=2^{p_0} \|\M\|_{p/p_0} v^{p_0},
\]
which yields (C).

Let us now estimate $\|T\|_{L^p\to L^p}$, given \eqref{eq:eq-ap}.
Let $f \in L^p$. Then by H\"older's inequality,
\begin{align*}
	\|Tf\|_p
	&= \left(\int |Tf|^p (\calR |f|)^{-(s-p)\frac{p}{s}} (\calR |f|)^{(s-p)\frac{p}{s}} \,\mathrm{d}\mu\right)^{1/p} \\
	& \leq \left(\int |Tf|^s (\calR |f|)^{-(s-p)} \,\mathrm{d}\mu\right)^{1/s}
	\left(\int (\calR |f|)^p \,\mathrm{d}\mu\right)^{\frac{s-p}{ps}}.
\end{align*}
We abbreviate $w:=(\calR |f|)^{-(s-p)}$. Applying assumption \eqref{eq:eq-ap} and (B) in the first step and (A) in the second step yields
\begin{align*}
	\|Tf\|_p
	& \leq c [w^{(q_0/s)'}]_{A_{\phi(s)}}^{\beta/(q_0/s)'} \left(\int |f|^s w \,\mathrm{d}\mu\right)^{1/s} \|f\|_p^{\frac{s-p}{s}} \\
	& \lesssim [w^{(q_0/s)'}]_{A_{\phi(s)}}^{\beta/(q_0/s)'} \left(\int |f|^p \,\mathrm{d}\mu\right)^{1/s} \|f\|_p^{1-\frac{p}{s}} \\
	& = c[w^{(q_0/s)'}]_{A_{\phi(s)}}^{\beta/(q_0/s)'}\|f\|_p.\\
	& = c[w^{(q_0/s)'(1-{\phi(s)}')}]_{A_{{\phi(s)}'}}^{({\phi(s)}-1)\beta/(q_0/s)'} \|f\|_p,
\end{align*}
since $[w]_{A_q} = [w^{1-q'}]_{A_{q'}}^{q-1}$.
Using the equality
\[
		\left(\frac{q_0}{s}\right)' \frac{s-p}{{\phi(s)} -1 } = p_0 \frac{s-p}{s-p_0}
\]
and Jensen's inequality with exponent $\frac{s-p}{s-p_0} <1$, we can write
\begin{align*}
	[w^{(q_0/s)'(1-{\phi(s)}')}]_{A_{{\phi(s)}'}}
	 = [(\calR |f|)^{(q_0/s)'\frac{s-p}{{\phi(s)}-1}}]_{A_{{\phi(s)}'}}
	\leq [(\calR |f|)^{p_0}]_{A_{{\phi(s)}'}}^{\frac{s-p}{s-p_0}}.	
\end{align*}
We thus obtain
\begin{align*}
	\|Tf\|_p
	& \lesssim
	 [(\calR |f|)^{p_0}]_{A_{{\phi(s)}'}}^{\beta \frac{s-p}{p_0} } \|f\|_p
	\leq [(\calR |f|)^{p_0}]_{A_1}^{\beta \frac{s-p}{p_0} }\|f\|_p.
\end{align*}
From property (C) and \eqref{eq:maxfinsec} we can then deduce
\begin{align*}
	\|T\|_{L^p\to L^p}
	\lesssim \|\M\|_{p/p_0}^{\beta \frac{s-p}{p_0}}
	\lesssim \left(\frac{p}{p-p_0}\right)^{\beta \frac{s-p}{p_0}}.
\end{align*}
This shows
\[
\limsup_{p\to p_0}(p-p_0)^{\beta \frac{s-p}{p_0}}\|T\|_{L^p\to L^p}\lesssim\limsup_{p\to p_0}p^{\beta \frac{s-p}{p_0}}<\infty
\]
which by definition of $\alpha_T(p_0)$ implies that $\beta \geq \frac{p_0}{s-p_0} \alpha_T(p_0)$.

Now for the behaviour for $p \to q_0$ we follow the argument of \cite{fefpip97}. We assume $q_0<\infty$. The case $q_0=\infty$ has been treated in \cite{luque15} already.
We abbreviate $q:=(q_0/s)'$.
Let $p_0<s<p<q_0$. We again use the iteration algorithm of Rubio de Francia, but slightly change the definition of the operator $\calR$. This time, we define $\calR$ by
\[
\calR h = \sum_{k=0}^\infty \frac{1}{2^k} \frac{\M_{q} ^k h}{\|\M_{q}\|_{(p/s)'}^k}
\]
for $h \in L^{(p/s)'}$ with $h\geq 0$.	
Then, just as before, we have
\begin{enumerate}
\item[(A)] $ h \leq \calR h$,
\item[(B)] $\|\calR h\|_{(p/s)'} \leq 2 \|h\|_{(p/s)'}$,
\item[(C)] $[(\calR h)^q]_{A_1} \leq 2^q \|M\|_{(p/s)'/q}$.
\end{enumerate}

Let $f \in L^p$. There exists $h \in L^{(p/s)'}$ with $\|h\|_{(p/s)'} =1$ and $h \geq 0$ so that by (A)
\begin{equation}\label{eq:tfsecest}
	\|Tf\|_p^s=\||Tf|^s\|_{p/s}\leq\int|Tf|^s h\,\mathrm{d}\mu
	\leq \int |Tf|^s \calR h \,\mathrm{d}\mu.
\end{equation}
It follows from the assumption \eqref{eq:eq-ap} and (B) that
\begin{align*}
\int |Tf|^s \calR h \,\mathrm{d}\mu
	& \leq c [(\calR h)^{(q_0/s)'}]_{A_{\phi(s)}}^{s\beta/(q_0/s)'} \int |f|^s \calR h \,\mathrm{d}\mu \\
	& \lesssim [(\calR h)^{(q_0/s)'}]_{A_1}^{s\beta/(q_0/s)'}  \|f\|_p^s.
\end{align*}
Hence, by \eqref{eq:tfsecest} and (C), we have
\[
		\|T\|_{L^p \to L^p} \lesssim [(\calR h)^{(q_0/s)'}]_{A_1}^{\beta/(q_0/s)'}
		\lesssim \|\M\|_{(p/s)'/(q_0/s)'}^{\beta/(q_0/s)'}.
\]
Using \eqref{eq:maxfinsec}, we find
\begin{align*}
	\|T\|_{L^p\to L^p} \lesssim \left( \frac{p(q_0-s)}{s(q_0-p)}\right)^{\beta/(q_0/s)'}
\end{align*}
so that
\[
\limsup_{p\to q_0}(q_0-p)^{\beta/(q_0/s)'}\|T\|_{L^p\to L^p}\lesssim\limsup_{p\to q_0}\left(\frac{p(q_0-s)}{s}\right)^{\beta/(q_0/s)'}<\infty.
\]
By definition of $\gamma_T(q_0)$, this yields $\beta\geq(q_0/s)' \gamma_T(q_0)$, proving the assertion.
\end{proof}

For the application of these results  to sparsely dominated operators,  we make the following observation.
\begin{proposition}
Let $1\leq p_0<q_0\leq\infty$ and let $T\in S(p_0,q_0)$. Then
\[
\alpha_T(p_0)\leq\frac{1}{p_0},\quad\gamma_T(q_0)\leq\frac{1}{q_0'}.
\]
\end{proposition}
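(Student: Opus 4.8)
The plan is to use the standard unweighted $L^p$ bounds for sparsely dominated operators established in Proposition~\ref{prop:sparseim}, but to track the dependence of the implicit constants on $p$ as $p\to p_0$ and $p\to q_0$ carefully. Recall from the proof of Proposition~\ref{prop:sparseim} (or from Remark~\ref{rem:unweightedopnorm}) that for $T\in S(p_0,q_0)$ and $p_0<p<q_0$ one has
\[
\|T\|_{L^p\to L^p}\leq C(T)\,\|\M_{p_0}\|_{L^p\to L^p}\,\|\M_{q_0'}\|_{L^{p'}\to L^{p'}}\leq C(T)\left[\Big(\tfrac{p}{p_0}\Big)'\right]^{\frac1{p_0}}\left[\Big(\tfrac{p'}{q_0'}\Big)'\right]^{\frac1{q_0'}},
\]
using the sharp Fefferman--Stein bound $\|\M_q\|_{L^r\to L^r}\leq\big[(r/q)'\big]^{1/q}$ from \eqref{eq:fefstein} with $w=1$ (with the obvious modification $\|\M_q\|\lesssim 1$ when $q=\infty$, i.e.\ when the corresponding maximal operator is just the identity bounded by $\|f\|_\infty$; more precisely when $q_0=\infty$ the factor $[(p'/q_0')']^{1/q_0'}$ should be read as $1$, and similarly when $p_0=1$).

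The key computation is then the asymptotic behaviour of these two constants. First I would analyze the factor controlling the $p\to p_0$ endpoint: as $p\downarrow p_0$ we have $(p/p_0)'=\frac{p/p_0}{p/p_0-1}=\frac{p}{p-p_0}\sim\frac{p_0}{p-p_0}$, so $\big[(p/p_0)'\big]^{1/p_0}\sim (p-p_0)^{-1/p_0}$ up to a constant. Since the other factor $\big[(p'/q_0')'\big]^{1/q_0'}$ stays bounded as $p\to p_0$ (because $p'\to p_0'$ which is strictly larger than $q_0'$), we get $\|T\|_{L^p\to L^p}\lesssim (p-p_0)^{-1/p_0}$. By the definition of $\alpha_T(p_0)$ in Definition~\ref{def:exponents}, this means that for any $\alpha>1/p_0$ we have $(p-p_0)^{\alpha-\eps}\|T\|_{L^p\to L^p}\to 0$ for small enough $\eps$, hence $\alpha_T(p_0)\leq 1/p_0$.

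For the $p\to q_0$ endpoint the roles are reversed. When $q_0<\infty$, as $p\uparrow q_0$ we have $p'\downarrow q_0'$, so $(p'/q_0')'=\frac{p'}{p'-q_0'}\sim\frac{q_0'}{p'-q_0'}$, and a short computation gives $p'-q_0'=\frac{p}{p-1}-\frac{q_0}{q_0-1}=\frac{q_0-p}{(p-1)(q_0-1)}\sim c(q_0-p)$, so $\big[(p'/q_0')'\big]^{1/q_0'}\sim(q_0-p)^{-1/q_0'}$, while the factor $\big[(p/p_0)'\big]^{1/p_0}$ remains bounded as $p\to q_0$. Hence $\|T\|_{L^p\to L^p}\lesssim(q_0-p)^{-1/q_0'}$, which by Definition~\ref{def:exponents} forces $\gamma_T(q_0)\leq 1/q_0'$. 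When $q_0=\infty$ we instead note $q_0'=1$ and $\big[(p'/q_0')'\big]^{1/q_0'}=(p')'=p$, which tends to infinity like $p^1$ as $p\to\infty$, so $\|T\|_{L^p\to L^p}\lesssim p$, giving $\gamma_T(\infty)\leq 1=1/q_0'$, consistent with the convention $q_0'=1$.

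I do not anticipate any serious obstacle here; the proof is essentially bookkeeping. The one point requiring a little care is the handling of the degenerate cases $p_0=1$ (where $\M_{p_0}=\M$ and $(p/p_0)'=p'$ is still bounded near $p_0=1$, so that endpoint factor does \emph{not} blow up, consistently with $\alpha_T(1)\leq 1$) and $q_0=\infty$ (treated above), to make sure the formulas $\big[(p/p_0)'\big]^{1/p_0}$ and $\big[(p'/q_0')'\big]^{1/q_0'}$ are interpreted correctly in the limits and that the remaining factor really does stay bounded on the relevant one-sided neighbourhood of the endpoint. Writing the two asymptotic estimates cleanly and invoking Definition~\ref{def:exponents} then completes the argument.
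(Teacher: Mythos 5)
Your proof is correct and is essentially the paper's argument: the paper also deduces the proposition directly from the unweighted bound $\|T\|_{L^p\to L^p}\lesssim\bigl[(p'/q_0')'\bigr]^{1/q_0'}\bigl[(p/p_0)'\bigr]^{1/p_0}$ of Remark \ref{rem:unweightedopnorm}, with the endpoint asymptotics you spell out being exactly the ``immediate'' step the paper leaves implicit. The only inaccuracies are in your side remarks on the degenerate cases, and they are harmless: when $q_0=\infty$ the factor $\bigl[(p'/q_0')'\bigr]^{1/q_0'}$ equals $(p')'=p$ (not $1$), as you in fact use later, and when $p_0=1$ the factor $(p/p_0)'=p'$ does blow up like $(p-1)^{-1}$ as $p\downarrow 1$ rather than stay bounded — but this is precisely the rate $(p-p_0)^{-1/p_0}$ already covered by your general computation, so the conclusion $\alpha_T(1)\leq 1$ is unaffected.
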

\begin{proof}
This is an immediate consequence of the fact that
\[
\|T\|_{L^p\to L^p}\lesssim\left[\left(\frac{p'}{q_0'}\right)'\right]^{\frac{1}{q_0'}}\left[\left(\frac{p}{p_0}\right)'\right]^{\frac{1}{p_0}},
\]
which follows from Remark \ref{rem:unweightedopnorm}.
\end{proof}

From the above, we can deduce optimality of the weighted estimates as stated in Theorem \ref{thm:opti-end}.

\begin{proof}[Proof of Theorem \ref{thm:opti-end}]
Let $\beta$ denote the best constant in the estimate
\[
\|T\|_{L^p(w) \to L^p(w)}\lesssim[w^{(q_0/p)'}]_{A_{\phi(p)}}^{\beta/(q_0/p)'}.
\]
Then it follows from the result \eqref{eq:atwo} from \cite{frey16} that
\[
\beta\leq\max\left(\frac{1}{p-p_0},\frac{q_0-1}{q_0-p}\right)
\]
Conversely, it follows from Theorem \ref{thm:ap-opt} that
\[
\beta\geq\max\left(\frac{p_0}{p-p_0}\alpha_T(p_0),\left(\frac{q_0}{p}\right)'\gamma_T(q_0) \right)=\max\left(\frac{1}{p-p_0},\frac{q_0-1}{q_0-p}\right)
\]
proving the first result. Using Theorem \ref{thm:a1-opt}, the second result follows analogously.
\end{proof}

Let us give an example of an operator $T$ for which the exponent $\gamma_T(q_0)$ is known.

\begin{example} \label{ex:twocopies}
Let $M$ be a complete $C^\infty$ Riemannian manifold $M$ of dimension $n \geq 3$. Assume that $M$ is the union of a compact part and a finite number of Euclidean ends, e.g. two copies of $\R^n$ glued smoothly along their unit circles. Then it was shown in \cite{CCH06} that in the case that the number of ends is at least two, the corresponding Riesz transform $T$ is bounded from $L^p(M)$ to $L^p(M;T^\ast M)$ if and only if $1<p<n$. More precisely, it was shown in \cite[Lemma 5.1]{CCH06} that the kernel of $T$ decays only to order $n-1$. A straightforward calculation, analogous to the classical results (see e.g. \cite[p.42]{stein93}), shows that this implies $\gamma_T(q_0)=\gamma_T(n)=\frac{n-1}{n}$.
\end{example}

\section*{Acknowledgement}

The first author would like to thank Carlos P\'erez for pointing out the results of \cite{luque15}. The second author is grateful to Fr\'ed\'eric Bernicot and Jos\'e Manuel Conde-Alonso for pointing out weak type results for sparse operators.

\bibliographystyle{alpha}

\end{document}